\declaretheorem[name=Theorem,within=section]{theorem}
\declaretheorem[name=Corollary,sibling=theorem]{corollary}
\declaretheorem[name=Lemma,sibling=theorem]{lemma}
\declaretheorem[name=Proposition,sibling=theorem]{proposition}
\declaretheorem[name=Claim,within=section]{claim}
\declaretheorem[name=Question, sibling=theorem, style=definition]{question}
\declaretheorem[name=Definition,sibling=theorem, style=definition]{definition}
\newcommand{\isom}{\cong}
\mathchardef\mhyphen="2D
\newcommand{\mcV}{V}
\newcommand{\mcU}{U}
\newcommand{\mcW}{W}
\newcommand{\mcP}{\mathcal{P}}
\newcommand{\Xspace}{X}
\newcommand{\Gspace}{G}
\newcommand{\Yspace}{Y}
\newcommand{\mcC}{\mathcal{C}}
\newcommand{\BPi}{\mathbf{\Pi}}
\newcommand{\BSigma}{\mathbf{\Sigma}}
\newcommand{\BDelta}{\mathbf{\Delta}}
\newcommand{\oa}{a}
\newcommand{\ob}{b}
\newcommand{\oc}{c}
\newcommand{\od}{d}
\newcommand{\ooe}{e}
\newcommand{\of}{f}
\newcommand{\bfx}{\mathbf{x}}
\newcommand{\smf}{\smallfrown}
\newcommand{\Proj}{\text{Proj}}
\begin{document}
\title[Non-Archimedean TSI Polish groups]{Non-Archimedean TSI Polish groups and their potential Borel complexity spectrum }
\author{Shaun Allison}
\address{Department of Mathematical Sciences, Carnegie Mellon University, Pittsburgh, PA 15213}
\email{shaunpallison@gmail.com}
\urladdr{https://www.shaunallison.com}

\thanks{The author would like to thank Clinton Conley, Aristotelis Panagiotopoulos, and Assaf Shani for many helpful discussions, and John Clemens and Samuel Coskey for sharing a preprint of their paper.}

\subjclass[2020]{Primary 54H05; Secondary}

\keywords{Polish group, Borel reduction, non-Archimedean, TSI, generically ergodic, potential Borel complexity, orbit equivalence relation, uniformization}

\begin{abstract}
    A variation of the Scott analysis of countable structures is applied to actions of non-Archimedean TSI Polish groups acting continuously on a Polish spaces. We give results on the \emph{potential Borel complexity spectrum} of such groups, and define orbit equivalence relations that are universal for each Borel complexity class. We also identify an obstruction to classifiability by actions of such groups, namely generic ergodicity with respect to $E_\infty$, which we apply to Clemens-Coskey jumps of countable Borel equivalence relations. Finally, we characterize the equivalence relations that are both Borel-reducible to $=^+$ and classifiable by non-Archimedean TSI Polish groups, extending a result of Ding and Gao. In the process, several tools are developed in the Borel reducibility theory of orbit equivalence relations which are likely to be of independent interest.
\end{abstract}

\maketitle

\section{Introduction}
The Scott analysis of countable structures can be described as the analysis of continuous actions of $S_\infty$ on Polish spaces, where $S_\infty$ is the Polish group of permutations of $\omega$ with the pointwise convergence topology. A generalization of the Scott analysis to all Polish groups was proposed in \cite{Hjorth_2010}. We will call this the \textbf{Hjorth analysis of Polish group actions}. This work was developed further in \cite{Drucker_2021} to prove some boundedness principles in orbit equivalence relations, and was a central tool used in \cite{Allison_Panagio_2021}. In this paper, we further develop this technique and apply it to an analysis of the actions of non-Archimedean TSI Polish groups.

A Polish group is {\bf non-Archimedean} if it is homeomorphic to a closed subgroup of $S_\infty$, the Polish group of permutations of the natural numbers. A Polish group is {\bf TSI} if it has a metric $d$ which is compatible with the topology and two-sided invariant (i.e. $d(a, b) = d(ca, cb) = d(ac, bc)$ for every $a, b, c$ in the group). Easily, the non-Archimedean TSI Polish groups are exactly the closed subgroups of product groups $\prod_{n=1}^\infty \Delta_n$ for countable discrete groups $\Delta_n$ (see e.g. \cite{Gao_Xuan_2014}). Such groups have a natural tree structure which is reflected in their actions. We study this structure to refine the results of \cite{HKL_1998}, which related potential Borel complexity and Borel reducibility of actions of non-Archimedean Polish groups, to actions of non-Archimedean TSI Polish groups.

Let $\Gamma$ be any pointclass of Borel sets which is closed under continuous preimages such as $\BPi^0_2$, $\BSigma^0_3$, etc. (which we will refer to as a Borel complexity class). An equivalence relation $E$ on a Polish space $\Xspace$ is {\bf potentially} $\Gamma$ if there is a Polish topology $\sigma$ on $\Xspace$, which is compatible with the original topology in the sense that their Borel sets coincide, such that $E$ is in $\Gamma$ as a subset of $\Xspace \times \Xspace$ with the product topology. This gives a notion of complexity of equivalence relations which is robust under compatible changes of topology.

Given two equivalence relations $E$ and $F$ on Polish spaces $\Xspace$ and $\Yspace$, we say that $E$ is {\bf Borel reducible} to $F$, denoted $E \le_B F$, if there exists a Borel function $f : \Xspace \rightarrow \Yspace$ such that $x_1 \mathrel{E} x_2$ if $f(x_1) \mathrel{F} f(x_2)$ for every $x_1, x_2 \in \Xspace$. It's easy to check that if $E \le_B F$ and $F$ is potentially $\Gamma$, then $E$ is potentially $\Gamma$ (recall that one may always pass to a compatible Polish topology in which a given Borel function becomes continuous).

This paper is part of a larger project to understand the potential Borel complexity and Borel reducibility of orbit equivalence relations induced by various classes of non-Archimedean Polish groups (See \cite{HKL_1998}, \cite{Ding_Gao_2017}, \cite{Shani_2021}, and \cite{Clemens_Coskey}).

\subsection{Classification by non-Archimedean TSI actions}

Given a class $\mathcal{C}$ of Polish groups, we say that an equivalence relation $E$ is {\bf classifiable by $\mathcal{C}$-actions} if there is a Polish group $\Gspace$ in $\mathcal{C}$ and a Polish $\Gspace$-space $\Xspace$ such that $E \le_B E^\Gspace_\Xspace$. 
In order to show that an equivalence relation $E$ is not classifiable by $\mathcal{C}$-actions, one often shows that $E$ is generically $E^H_Y$-ergodic for any $H \in \mcC$ and Polish $H$-space $Y$, (i.e. for any Baire-measurable homomorphism $f : \Xspace \rightarrow \Yspace$ there is a comeager subset $C \subseteq \Xspace$ such that $f[C]$ is contained in a single $H$-orbit). If $E$ does not have any comeager classes, then of course this implies that there is no such Borel reduction.

In \cite{Hjorth_2000} and the survey \cite{Kechris_1997}, two different conditions are identified which imply generic ergodicity with respect to actions of non-Archimedean Polish groups. One of them is known as \emph{generic preturbulence}, where preturbulence is a local property stating that some notion of local orbits are somewhere dense. The other is generic ergodicity with respect to a particular simple equivalence relation (namely $=^+$, the Friedman-Stanley jump of equality which we will define later). 

We show that in a similar vein, generic ergodicity with respect to $E_\infty$ is equivalent to generic ergodicity with respect to actions of non-Archimedean TSI Polish groups.

\begin{restatable*}{theorem}{topobstructionthm}\label{th:top_obstruction}
An equivalence relation $E$ is generically ergodic with respect to any orbit equivalence relation $E^\Gspace_\Xspace$ with $\Gspace$ non-Archimedean TSI Polish if and only if $E$ is generically $E_\infty$-ergodic.
\end{restatable*}

Recall that the equivalence relation $E_\infty$ is defined as the shift action of $F_2$, the free group on two-generators, on $2^{F_2}$. The property of $E_\infty$ we will be exploiting is the fact that it is the maximal countable Borel equivalence relation under Borel reducibility (see e.g.  \cite[Theorem 7.3.8]{Gao_2009}).

In \cite{Clemens_Coskey}, Clemens and Coskey define a jump operator $E \mapsto E^{[\Delta]}$ on equivalence relations for every countable group $\Delta$, and show that in the case that $E$ is countable it is classified by a non-Archimedean CLI (complete left-invariant) Polish group. As it was not known if classification by CLI Polish groups implied classification by TSI Polish groups, they asked if these relations are classified by TSI Polish groups. We can apply Theorem \ref{th:top_obstruction} to conclude that many of these equivalence relations, such as $E_0^{[\mathbb{Z}]}$, are not classifiable by non-Archimedean TSI Polish groups.

\begin{restatable*}{corollary}{ccapplication}\label{cor:cc_application}
If $E$ is generically ergodic with every $E$-class meager, and $\Delta$ is infinite, then $E^{[\Delta]}$ is not classifiable by non-Archimedean TSI Polish groups.
\end{restatable*}

We note that generic ergodicity with respect to $=_{\mathbb{R}}$, equality on the reals, is usually just referred to as generic ergodicity.

While a local property similar to turbulence but for non-Archimedean TSI Polish groups has not yet been identified, a global topological property known as \emph{generic semi-connectnedness of the balanced graph} is identified in \cite{Allison_Panagio_2021}, which implies generic ergodicity with respect to actions of all TSI Polish groups. It is not clear that there is a converse result for the property identified in \cite{Allison_Panagio_2021}, and no dichotomy has yet been found along the lines of \cite{Hjorth_2002} but for classification by $\mcC$-actions, where $\mcC$ is the class of non-Archimedean TSI Polish groups rather than the entire class of non-Archimedean Polish groups.

By replacing $E_\infty$ in the previous discussion with $E_0$, we isolate another anticlassification result. Recall that $E_0$ is the equivalence relation $2^\omega$ which relates two elements $x$ and $y$ if they eventually agree, i.e. $x(n) = y(n)$ for large enough $n$. By Slaman-Steel, this is Borel-bireducible with the orbit equivalence relation of the shift action of $\mathbb{Z}$ on $2^\mathbb{Z}$. If we follow the same line of argument of Theorem \ref{th:top_obstruction} with respect to $E_0$-ergodicity, we get the following result:
\begin{restatable*}{theorem}{topobstructionthmtwo}\label{th:top_obstruction2}
An equivalence relation $E$ is generically ergodic with respect to any orbit equivalence relation $E^\Gspace_\Xspace$ with $\Gspace$ non-Archimedean abelian Polish if and only if $E$ is generically $E_0$-ergodic.
\end{restatable*}

\subsection{Universal Actions}
In \cite{HKL_1998}, a thorough analysis was performed for actions of non-Archimedean Polish groups with Borel orbit equivalence relations, completely characterizing the possible potential Borel complexity classes. They produced the following list:

\begin{equation}\label{eq:complexity_spectrum}
\BDelta^0_1, \BPi^0_1, \BSigma^0_2, \BPi^0_n, D(\BPi^0_n) \text{ ($n \ge 3$)}, \bigoplus_{\alpha < \lambda} \BPi^0_\alpha, \BPi^0_\lambda, \BSigma^0_{\lambda+1}, \BPi^0_{\lambda+n}, D(\BPi^0_{\lambda+n}) \text{ ($n \ge 2$, $\lambda$ limit)}. \tag{$*$}
\end{equation}
Here, an equivalence relation $E$ is $D(\BPi^0_\alpha)$ if it is the difference of two $\BPi^0_\alpha$ sets, and $\bigoplus_{\alpha < \lambda} \BPi^0_\alpha$ if there is a partition $\Xspace = \bigsqcup_{n \in \omega} \Xspace_n$ of the domain of $E$ into Borel sets such that for every $n$, there is some $\alpha < \lambda$ such that $E \cap (X_n \times X_n)$ is $\BPi^0_\alpha$.

It's straightforward to check that this list is linearly-ordered in the natural way. For example, starting from the beginning, we have:
\[\BDelta^0_1 \subseteq \BPi^0_1 \subseteq \BSigma^0_2 \subseteq \BPi^0_3 \subseteq D(\BPi^0_3) \subseteq \BPi^0_4 \subseteq D(\BPi^0_4) \subseteq ... \]

We can also analyze classification by $\mathcal{C}$-actions of a given Borel complexity. Given a complexity $\Gamma$ from $(*)$, we say that an equivalence relation $E$ is {\bf classifiable by $\mathcal{C}$-actions of complexity $\Gamma$} if there is some Polish group $\Gspace$ from $\mathcal{C}$ and a Polish $\Gspace$-space $\Xspace$ such that $E \le_B E^\Gspace_\Xspace$ and $E^G_X$ has complexity $\Gamma$.

In \cite[Theorem 6.1]{Ding_Gao_2017}, Ding and Gao proved that if $E$ is an essentially countable equivalence relation and $E \le_B E^\Gspace_\Xspace$ where $\Gspace$ is non-Archimedean abelian Polish, then $E \le_B E_0$.

We will be able to generalize this result by characterizing the equivalence relations classifiable by $\mathcal{C}$-actions of potential complexity $\BPi^0_3$ when $\mathcal{C}$ is the class of non-Archimedean TSI Polish groups, or the non-Archimedean abelian Polish groups. Recall that given equivalence relations $E_n$ for $n \in \omega$ on Polish spaces $X_n$, the product $\prod_n E_n$ is the equivalence relation on the product Polish space $\prod_n X_n$ relating sequences $(x_n)_{n \in \omega}$ and $(y_n)_{n \in \omega}$ iff $x_n E_n y_n$ for every $n$.

\begin{restatable*}{corollary}{classificationcor}\label{cor:classification}
Suppose $E \le_B F$ where $F$ is $\BPi^0_3$ and idealistic. If $E$ is also classifiable by a non-Archimedean TSI action, then $E \le_B E_\infty^\omega$. If $E$ is classifiable by a non-Archimedean abelian action, then $E \le_B E_0^\omega$.
\end{restatable*}

In addition to characterizing the potential complexity spectrum of $S_\infty$, the authors of \cite{HKL_1998} defined, for each complexity class in (*), a single action of $S_\infty$ belonging to that class, which is also universal for actions of $S_\infty$ belonging to that class. Given a Polish group $\Gspace$ and complexity class $\Gamma$, we say that an equivalence relation $E$ is {\bf universal for potentially $\Gamma$ equivalence relations classifiable by $\Gspace$} if $E$ is induced by a continuous action of $\Gspace$ and is potentially $\Gamma$, and for any other such equivalence relation $F$, we have $F \le_B E$. In other words, they showed that for every complexity class $\Gamma$ in $(*)$, there is an equivalence relation which is universal for orbit equivalence relations induced by $S_\infty$ of potential complexity $\Gamma$.

The previous lemma shows that $E_\infty^\omega$ and $E_0^\omega$ are universal among potentially $\BPi^0_3$ orbit equivalence relations classifiable by non-Archimedean TSI and abelian Polish groups respectively. We show that such equivalence relations exist for every possible potential complexity class.

\begin{restatable*}{theorem}{tsiuniversalactionsthm}\label{th:tsi_universal_actions}
For every complexity class $\Gamma$ in $(*)$ and for $\mcC$ equal to the non-Archimedean TSI Polish groups or the non-Archimedean abelian Polish groups, there is an equivalence relation that is universal for potentially $\Gamma$ equivalence relations classifiable by $\mcC$ groups.
\end{restatable*}

We can generalize this definition in the natural way. Let $\mathcal{C}$ be a class of Polish groups, such as the non-Archimedean TSI Polish groups. We say that an equvivalence relation $E$ is {\bf universal for potentially $\Gamma$ orbit equivalence relations induced by $\mathcal{C}$-actions} if $E$ is potentially $\Gamma$, induced by a continuous action of some group in $\mathcal{C}$, and for any such equivalence relation $F$, we have $F \le_B E$. In \cite{HKL_1998}, it is shown for every complexity $\Gamma$ in $(*)$ that there is an equivalence relation that is universal for potentially $\Gamma$ orbit equivalence relations induced by actions of non-Archimedean Polish groups. We can show that this is also true when $\mathcal{C}$ is the class of non-Archimedean TSI and non-Archimedean abelian Polish groups.

\begin{restatable*}{corollary}{tsiuniversalactionscor}\label{cor:tsi_universal_actions}
For $\mathcal{C}$ equal to the non-Archimedean abelian Polish groups, or the non-Archimedean TSI Polish groups, and for any $\Gamma$ in $(*)$, there is an equivalence relation that is universal for potentially $\Gamma$ orbit equivalence relations induced by $\mathcal{C}$-actions.
\end{restatable*}

\subsection{The Potential Complexity Spectrum}

The authors of \cite{HKL_1998} proved that $(*)$ was exhaustive for actions of non-Archimedean Polish groups with Borel equivalence relations. For example, they showed that if such an equivalence relation $E$ is potentially $\BSigma^0_3$, then it is potentially $\BSigma^0_2$, and if it is potentially $\BSigma^0_4$ then it is potentially $D(\BPi^0_3)$.

Given a Polish group $G$, a Polish $G$-space $(X, a)$ consists of a Polish space $X$ and a continuous action $a : G \curvearrowright X$ (though we often write $(X, a)$ as simply $X$ when there is no ambiguity). For a non-Archimedean Polish group $G$, we will let the {\bf potential Borel complexity spectrum} of $G$ to be exactly the complexity classes $\Gamma$ in $(*)$ for which there is a Polish $G$-space $X$ such that $E^G_X$ is potentially $\Gamma$, but not potentially $\Gamma'$ for any complexity class $\Gamma'$ appearing earlier in the list.

This raises a family of interesting questions -- given a non-Archimedean Polish group $\Gspace$, what is its potential Borel complexity spectrum? By exploiting the natural tree structure of the non-Archimedean TSI Polish groups, we can give the following partial result.

\begin{restatable*}{theorem}{tsicomplexityspectrumthm}\label{th:tsi_complexity_spectrum}
For any countable discrete group $\Delta$, the potential Borel complexity spectrum of the group $\Delta^\omega$ is an initial segment of $(*)$.
\end{restatable*}

More generally, we actually prove this for groups $G^\omega$ where $G$ is non-Archimedean TSI. From this we can conclude that if $G$ is a non-Archimedean TSI Polish group, and $G^\omega$ induces Borel orbit equivalence relations of arbitrarily high complexity, then every complexity class in $(*)$ is in the potential Borel complexity spectrum of $G^\omega$. In this case, we say that this group has {\bf full potential Borel complexity spectrum}. It was proven in \cite{HKL_1998} that $S_\infty$ has full potential Borel complexity spectrum. Using Solecki's results on tame abelian product groups (see \cite{Solecki_1995}), we can give more examples of such groups.

\begin{restatable*}{corollary}{tsicomplexityspectrumcor}\label{cor:tsi_complexity_spectrum}
The groups $\mathbb{Z}^\omega$, $(\mathbb{Z}_p^{<\omega})^\omega$ for prime $p$, and $F_2^\omega$ all have full potential Borel complexity spectrum.
\end{restatable*}

\subsection{Structure of the Paper}

In Section 2, we cover some basic definitions and results about Borel reductions and changes of topology that we will need throughout the paper, some of which is new or previously unpublished. In Section 3, we review Hjorth's Scott-like analysis of Polish group actions.  In Section 4, we compute the Hjorth analysis for the special case of actions of non-Archimedean TSI Polish groups, and define a family of universal equivalence relations for every Borel complexity class. In Section 5, we prove all of the results stated in the introduction.

\section{Potential Borel complexity of orbit equivalence relations}

Given a Polish group $\Gspace$ and Polish $\Gspace$-space $\Xspace$ we recall that for an arbitrary subset $A \subseteq \Xspace$ and nonempty open $\mcV \subseteq \Gspace$ the sets $A^{\Delta \mcV}$ and $A^{*\mcV}$, called the {\bf Vaught transforms} of $A$, are defined by:
\[A^{\Delta \mcV} := \{x \in \Xspace \mid \text{for a nonmeager set of } g \in \mcV, \; g \cdot x \in A\}\]
and
\[\quad A^{* \mcV} := \{x \in \Xspace \mid \text{for a comeager set of } g \in \mcV, \; g \cdot x \in A\} .\]
See \cite[Lemma 5.1.7]{Becker_Kechris_1996} for some basic properties of Vaught transforms.

\subsection{Changes of topology}

Given a Polish group $G$ and a Polish $G$-space $X$ such that $E^G_X$ has potential Borel complexity $\Gamma$, by definition there is a Polish topology $\sigma$ on $X$ such that $E^G_X$ is $\Gamma$ in $X \times X$ in the product topology. However, it is often useful to choose such $\sigma$ where the action of $G$ is continuous with respect to $\sigma$. We will show how to do this.





\begin{proposition}[Hjorth]\label{pr:hjorth_topology_change}
Let $\alpha < \omega_1$ and let $\{A_n \mid n < \omega\}$ be a countable family of $\BSigma^0_\alpha$ subsets of $\Xspace$. Then there is a finer topology $\sigma$ such that for every $A_n$ and open $\mcV \subseteq \Gspace$, $A_n^{\Delta \mcV}$ is open in $\sigma$. Furthermore, every set $U$ open in $\sigma$ is $\BSigma^0_{\alpha}$ in the original topology.
\end{proposition}

\begin{proof}
See \cite[4.3.3]{Gao_2009}.
\end{proof}

\begin{lemma}\label{lm:top_change}
Let $\Gspace$ be a Polish group acting on a set $X$ and suppose $\tau$ and $\sigma$ are two different topologies on $X$ making it a Polish $G$-space. Assume that $\mcU^{\Delta \mcV}$ is open in $\sigma$ whenever $\mcU \subseteq \Xspace$ is open in $\tau$ and $V \subseteq G$ open.

Then for any open $\mcV \subseteq \Gspace$, any ordinal $\alpha \ge 1$, and any set $A \subseteq \Xspace$,
\[\text{if } A \text{ is } \BPi^0_\alpha \text{ in } \tau, \quad \text{then} \quad A^{*\mcV} \text{ is } \BPi^0_\alpha \text{ in } \sigma,\]
and
\[\text{if } A \text{ is } \BSigma^0_\alpha \text{ in } \tau, \quad \text{then} \quad A^{\Delta \mcV} \text{ is } \BSigma^0_\alpha \text{ in } \sigma.\]
\end{lemma}

\begin{proof}
This is true for $\alpha = 0$ by the assumption. We prove the rest by induction.

Fix $\alpha$ and assume the claim is true for all $\beta < \alpha$. If $A$ is $\BSigma^0_\alpha$ in $\tau$, then we can write $A = \bigcup_{n \in \omega} A_n$ where each $A_n$ is $\BPi^0_{\beta_n}$ for some $n < \omega$. Then
\[A^{\Delta \mcV} = \Big[\bigcup_{n \in \omega} A_n\Big]^{\Delta \mcV} = \bigcup_{n \in \omega} A_n^{\Delta \mcV} = \bigcup_{n \in \omega} \bigcup_{m \in \omega} A_n^{*\mcW_m}\]
by basic properties of the Vaught transforms, where $W_m$ enumerates a basis of open sumsets of $V$.
By the induction hypothesis, each $A_n^{*\mcW}$ is $\BPi^0_{\beta_n}$ in $\sigma$ and thus $A^{\Delta \mcV}$ is $\BPi^0_\alpha$ in $\sigma$.

On the other hand, if $A$ is $\BSigma^0_\alpha$ then its complement is $\BPi^0_\alpha$ and we just proved that $X \setminus A^{\Delta V} = (X \setminus A)^{*V}$ is $\BPi^0_\alpha$ in $\sigma$. Thus $A^{\Delta V}$ is $\BSigma^0_\alpha$ in $\sigma$. 
\end{proof}

This allows us to give the promised equivalent statement of potential Borel complexity for orbit equivalence relations.

\begin{theorem}\label{th:top_change}
Let $\Gspace$ be a Polish group and $\Xspace$ a Polish $\Gspace$-space. If $E^\Gspace_\Xspace$ is potentially $\Gamma$ for some $\Gamma$ in $(*)$, then there is a topology $\sigma$ on $\Xspace$ such that $(\Xspace, \sigma)$ is a Polish $\Gspace$-space and $E^\Gspace_\Xspace$ is $\Gamma$.
\end{theorem}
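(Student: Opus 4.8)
The plan is as follows. Write $E = E^\Gspace_\Xspace$ and let $\tau$ denote the given topology on $\Xspace$; since every class in $(*)$ is a class of Borel sets, the hypothesis forces $E$ to be Borel. First fix a Polish topology $\tau_1$ on $\Xspace$, with the same Borel sets as $\tau$, witnessing that $E$ is $\Gamma$ in $(\Xspace,\tau_1)^2$, and fix a countable basis $\{U_n\}_{n \in \omega}$ of $\tau_1$. Each $U_n$ is $\BSigma^0_{\gamma_n}(\tau)$ for some countable $\gamma_n$, so there is a countable $\alpha$ with every $U_n$ being $\BSigma^0_\beta(\tau)$ for some $\beta < \alpha$. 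Apply Proposition \ref{pr:hjorth_topology_change} to the Polish $\Gspace$-space $(\Xspace,\tau)$ with the family $\mathcal{A} = \{U_n : n \in \omega\}$ and this $\alpha$, obtaining a finer Polish topology $\sigma$ on $\Xspace$ such that every $\sigma$-open set is $\BSigma^0_\beta(\tau)$ for some $\beta < \alpha$, such that $U_n^{\Delta\mcV}$ is $\sigma$-open for every $n$ and every $\mcV$ in a fixed countable basis $\mcB_0$ of $\Gspace$, and — by the Becker--Kechris style construction underlying it (see \cite[4.3.3]{Gao_2009}) — in which the $\Gspace$-action is continuous. As $\sigma$ is finer than $\tau$ and all $\sigma$-open sets are $\tau$-Borel, $\sigma$, $\tau$ and $\tau_1$ all have the same Borel sets, so $(\Xspace,\sigma)$ is a Polish $\Gspace$-space; it remains to see that $E$ is $\Gamma$ in $(\Xspace,\sigma)^2$.

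The key move is to transfer the $\Gamma$-computation of $E$ from $\tau_1 \times \tau_1$ to $\sigma \times \sigma$ via Lemma \ref{lm:top_change}, applied to $\Xspace \times \Xspace$ with the \emph{product} action of the Polish group $\Gspace \times \Gspace$ (not the diagonal $\Gspace$-action), with topologies $\tau_1 \times \tau_1$ and $\sigma \times \sigma$ and the product basis of $\Gspace \times \Gspace$. Two observations make this go through. First, since $E$ is an orbit equivalence relation, $(g_1 x, g_2 y) \in E$ iff $(x,y) \in E$ for all $g_1, g_2 \in \Gspace$; hence $E$ is $(\Gspace \times \Gspace)$-invariant, so $E^{*\mcV} = E^{\Delta\mcV} = E$ for every nonempty basic open $\mcV \subseteq \Gspace \times \Gspace$ (and the same for any $(\Gspace \times \Gspace)$-invariant subset of $\Xspace \times \Xspace$). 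Second, for a product action Kuratowski--Ulam gives $(A \times B)^{\Delta(\mcV_1 \times \mcV_2)} = A^{\Delta\mcV_1} \times B^{\Delta\mcV_2}$ for $A,B \subseteq \Xspace$; hence any $\tau_1 \times \tau_1$-open $O = \bigcup_k (U_{m_k} \times U_{n_k})$ satisfies $O^{\Delta(\mcV_1 \times \mcV_2)} = \bigcup_k U_{m_k}^{\Delta\mcV_1} \times U_{n_k}^{\Delta\mcV_2}$, which is $\sigma \times \sigma$-open by the choice of $\sigma$. So the hypothesis of Lemma \ref{lm:top_change} holds, and the lemma yields: a $\BPi^0_\xi$ subset of $\tau_1 \times \tau_1$ has $\BPi^0_\xi$ $*$-transforms in $\sigma \times \sigma$, and a $\BSigma^0_\xi$ subset has $\BSigma^0_\xi$ $\Delta$-transforms.

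Now I would unwind $\Gamma$ according to its form in $(*)$, transferring each building block by combining these preservation properties with the $(\Gspace \times \Gspace)$-invariance of $E$. For $\Gamma$ of $\BPi$-type ($\BPi^0_1$, $\BPi^0_n$, $\BPi^0_\lambda$, $\BPi^0_{\lambda+n}$): $E \in \BPi^0_\xi(\tau_1 \times \tau_1)$, so $E = E^{*\mcV} \in \BPi^0_\xi(\sigma \times \sigma)$. For $\Gamma = D(\BPi^0_\xi)$: write $E = A \cap C$ with $A \in \BPi^0_\xi(\tau_1 \times \tau_1)$ and $C \in \BSigma^0_\xi(\tau_1 \times \tau_1)$; then $A^{*\mcV} \cap C^{\Delta\mcV}$ is $D(\BPi^0_\xi)$ in $\sigma \times \sigma$ and equals $E$ by invariance. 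For $\Gamma$ equal to $\BSigma^0_2$ or $\BSigma^0_{\lambda+1}$: write $E = \bigcup_m E_m$ with $E_m \in \BPi^0_{\xi_m}$ of $\tau_1 \times \tau_1$; since $\Delta$-transforms commute with countable unions and $E_m^{\Delta\mcV} = \bigcup_{\mcW \subseteq \mcV,\, \mcW \in \mcB_0} E_m^{*\mcW}$, we get $E = E^{\Delta\mcV} = \bigcup_{m,\mcW} E_m^{*\mcW}$ with each $E_m^{*\mcW} \in \BPi^0_{\xi_m}(\sigma \times \sigma)$, keeping $E$ in the same class. For $\Gamma = \bigoplus_{\beta < \lambda} \BPi^0_\beta$: take a witnessing Borel partition $\Xspace = \bigsqcup_m \Xspace_m$, which may be taken $E$-invariant; then each $\Xspace_m \times \Xspace_m$ is $(\Gspace \times \Gspace)$-invariant, so $E \cap (\Xspace_m \times \Xspace_m) \in \BPi^0_{\beta_m}(\tau_1 \times \tau_1)$ transfers to $\BPi^0_{\beta_m}(\sigma \times \sigma)$ via its $*$-transform, while $\{\Xspace_m\}$ remains a Borel partition. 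The case $\BDelta^0_1$ follows by applying the $\BPi^0_1$ case to $E$ and to $(\Xspace \times \Xspace) \setminus E$, both $(\Gspace \times \Gspace)$-invariant. The bounded rank increases that arise in these manipulations are absorbed because each of the relevant classes is closed under the needed operations (and $\lambda$ is a limit at the limit levels).

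The step I expect to be the main obstacle is arranging that $E$ comes out \emph{exactly} of class $\Gamma$ in $\sigma \times \sigma$ rather than one Borel rank worse. A transfer via the diagonal $\Gspace$-action on $\Xspace \times \Xspace$ does not accomplish this: there $(A \times B)^{\Delta\mcV} = \bigcup_{\mcW}(A^{*\mcW} \times B^{*\mcW})$, and since a $*$-transform of an open set is only $\BPi^0_2$ in the new topology, one cannot even verify the hypothesis of Lemma \ref{lm:top_change}, and a level is lost throughout the computation. Passing instead to the product $\Gspace \times \Gspace$-action — under which $E$ is still invariant and boxes of basic opens have boxes of basic opens as Vaught transforms — is precisely what removes this loss; after that, what remains is the routine bookkeeping in the case analysis (and, for $\bigoplus_{\beta < \lambda} \BPi^0_\beta$, invoking the standard fact that the witnessing partition may be taken $E$-invariant).
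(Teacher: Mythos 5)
Your proposal is correct and follows essentially the same route as the paper: obtain $\sigma$ from Proposition \ref{pr:hjorth_topology_change} applied to a countable basis of the witnessing topology, then transfer the complexity computation through Lemma \ref{lm:top_change} applied to the product action of $\Gspace \times \Gspace$ on $\Xspace \times \Xspace$, using the Kuratowski--Ulam identity $(\mcU_1 \times \mcU_2)^{\Delta(\mcV_1 \times \mcV_2)} = \mcU_1^{\Delta \mcV_1} \times \mcU_2^{\Delta \mcV_2}$ and the $(\Gspace \times \Gspace)$-invariance of $E^\Gspace_\Xspace$. The only difference is that you carry out explicitly the case analysis over the classes in $(*)$ that the paper dismisses as easy, and your remark about why the diagonal action would lose a level is exactly the point the paper's use of the product action is designed to avoid.
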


\begin{proof}
Let $\tau$ be a Polish topology on $\Xspace$ such that $E^G_X$ is $\Gamma$ in $\tau$. By Proposition  \ref{pr:hjorth_topology_change}, we can find a topology $\sigma$ such that $(\Xspace, \sigma)$ is a Polish $\Gspace$-space, and $\mcU^{\Delta \mcV}$ is open in $\sigma$ for every basic open $\mcV \subseteq \Gspace$ and every basic $\tau$-open $\mcU \subseteq \Xspace$.

Observe that
\[\{ \mcV_1 \times \mcV_2 \mid \mcV_1, \mcV_2 \; \text{basic open}\}\]
is a countable basis for $G \times G$ with the product topology, and futhermore, if $\mcU_1 \times \mcU_2$ is open in $\tau \times \tau$, then by Kuratowski-Ulam,
\[(\mcU_1 \times \mcU_2)^{\Delta (\mcV_1 \times \mcV_2)} = \mcU_1^{\Delta \mcV_1} \times \mcU_2^{\Delta \mcV_2}. \]
Thus the natural product action of $G \times G$ on $X \times X$ and the product topologies $\tau \times \tau$ and $\sigma \times \sigma$ satisfy the conditions of Lemma \ref{lm:top_change}. By noting that $E^G_X = (E^G_X)^{\Delta G \times G} = (E^G_X)^{* G \times G}$, it is easy to see for each $\Gamma$ that it follows from the lemma that $E^G_X$ is $\Gamma$ in $\sigma$.
\end{proof}

\subsection{Actions of non-Archimedean Polish groups}

We recall results of Hjorth-Kechris-Hjorth characterizing the potential Borel complexity of actions of non-Archimedean Polish groups. The results in this paper are essentially an extension of these results to actions of the non-Archimedean TSI Polish groups.

\begin{definition}
Given an equivalence relation $E$ on a Polish space $X$, the \emph{Friedman-Stanley jump} of $E$ is the equivalence relation $E^+$ on $X^\omega$ such that $(x_i)_{i \in \omega} \mathrel{E^+} (y_i)_{i \in \omega}$ iff $\{[x_i]_E \mid i \in \omega\} = \{[y_i]_E \mid i \in \omega\}$.
\end{definition}

Let $\mcC$ denote Cantor space. Recursively define the $\alpha$-iterated power set $\mcP^\alpha(\mcC)$ of $\mcC$ by letting $\mcP^0(\mcC) := \mcC$, and $\mcP^{\alpha+1}(\mcC) := \mcP(\mcP^\alpha(\mcC))$, and for limit $\gamma$ define $\mcP^\lambda(\mcC) := \prod_{\alpha < \gamma} \mcP^\alpha(\mcC)$ (i.e., the set of sequences $(x_\alpha)_{\alpha<\lambda}$ such that each $x_\alpha \in \mcP^\alpha(\mcC)$). The Friedman-Stanley jumps of equality on $\mcC$, denoted $=_\mcC^{+\alpha}$, give a family of equivalence relations of increasing complexity which have elements in $\mcP^\alpha(\mcC)$ as invariants.

For countable ordinals $\alpha$, the relation $=^{+\alpha}_\mcC$ called the \emph{$\alpha$th-iterated Friedman-Stanley jump of equality}, is defined recursively on $\alpha$. The 0th jump $=^{+0}_\mcC$, or just $=_{\mcC}$, is defined to be $= \upharpoonright \mcC \times \mcC$ (``equality on the reals"), $=^{+(\alpha+1)}_\mcC$ is the Friedman-Stanley jump of $=^{+\alpha}_\mcC$, and for limit $\alpha$, $=^{+\alpha}_\mcC$ is the product of $=^{+\beta}_\mcC$ for $\beta < \alpha$. Recall that given a sequence $(E_\beta)_{\beta < \alpha}$ of equivalence relations living on Polish spaces $X_\beta$ for $\beta < \alpha$, the product $\prod_{\beta < \alpha} E_\beta$ is the equivalence relation living on the product Polish space $\prod_{\beta < \alpha} X_\beta$ which relates sequences $(x_\beta)_{\beta < \alpha}$ and $(y_\beta)_{\beta < \alpha}$ iff $x_\beta \mathrel{E_\beta} y_\beta$ for every $\beta < \alpha$. From now on, for notational tidiness, we write the $\alpha$th Friedman-Stanley jump of equality on the reals as $=^{+\alpha}$, dropping the subscript.

The following fundamental result shows the relationship between the Friedman-Stanley jumps of equality and potential Borel complexity of actions of non-Archimedean Polish groups.

\begin{proposition}[Hjorth-Kechris-Louveau]\label{pr:hkl}
Suppose $G$ is a non-Archimedean Polish group and $X$ a Polish $G$-space. Then
\begin{enumerate}
    \item $E^G_X \le_B \; =_{\mcC}$ iff $E^G_X$ is potentially $\BPi^0_1$;
    \item For every $n \in \omega$, $E^G_X \le_B \; =^{+(n+1)}$ iff $E^G_X$ is potentially $\BPi^0_{n+3}$;
    \item For every limit $\lambda$, $E^G_X \le_B \; =^{+\lambda}$ iff $E^G_X$ is potentially $\BPi^0_\lambda$; and
    \item For every limit $\lambda$ and every $n \in \omega$, $E^G_X \le_B \; =^{+(\lambda+n+1)}$ iff $E^G_X$ is potentially $\BPi^0_{\lambda+n+2}$.
\end{enumerate}
Moreover, every $=^{+\alpha}$ is induced by a Borel action of $S_\infty$ on a Polish space.
\end{proposition}

\begin{proof}
See \cite[Theorem 2]{HKL_1998} for the first part and \cite[Section 1]{HKL_1998} for the moreover part.
\end{proof}

We will prove an analogue of this result for the non-Archimedean TSI Polish groups, and much of the same techniques will be employed here, albeit in a more refined form.

\subsection{Inverses of Borel reductions}
Given a Polish space $X$ and any subset $A \subseteq X$, we say that a set $B \subseteq X$ is \emph{Borel relative to $A$} if there is a Borel set $B_0 \subseteq X$ such that $B_0 \cap A = B \cap A$. This is the same as saying that $B \cap A$ is Borel in the subspace topology on $A$.

\begin{definition}
Suppose $X$ and $Y$ are Polish spaces and $A \subseteq X$ any subset. A function $\Phi : A \rightarrow \mathcal{P}(\mathcal{P}(Y))$ is \emph{Borel-on-Borel relative to $A$} if for every Polish space $Z$ and Borel $B \subseteq Z \times X \times Y$, the set $\{(z, x) \in Z \times A \mid B_{z, x} \in \Phi(x)\}$ is Borel relative to $Z \times A$. We simply say that such $\Phi$ is Borel-on-Borel when it is Borel-on-Borel relative to the entire space $X$.
\end{definition}

We will need a slightly stronger version of the standard ``large-section" uniformization result (see \cite[Theorem 18.6]{Kechris_1997}) for an analytic domain. The same proof works with only minor modifications as is mentioned in \cite{Kechris_CDST_corrections}. For completeness, we include a proof.

\begin{proposition}\label{pr:large_section}
Suppose $P \subseteq X \times Y$ is Borel for Polish spaces $X$ and $Y$, and for every $x$, there is a $\sigma$-ideal $I_x$ on $Y$ such that $P_x \not\in I_x$ whenever $P_x$ is nonempty. Assume also that the map $x \rightarrow I_x$ is Borel-on-Borel relative to $\Proj_X(P)$. Then there is a Borel uniformization of $P$.
\end{proposition}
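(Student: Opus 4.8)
The plan is to adapt the proof of the classical large-section uniformization theorem, \cite[Theorem 18.6]{Kechris_1997}, to accommodate the varying family of $\sigma$-ideals $(I_x)_{x \in X}$ and the fact that the relevant domain $\Proj_X(P)$ is only analytic. There are three steps: (i) reduce to the case that $P$ is closed; (ii) inside the closed sections, run a Jankov--von Neumann ``leftmost branch'' selection, with the clause ``$P_x \cap U \notin I_x$'' in place of ``$P_x \cap U \neq \emptyset$''; (iii) verify that the resulting selector has Borel graph --- this is where the Borel-on-Borel hypothesis enters.

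For (i), invoke the standard representation of Borel sets as continuous injective images of closed subsets of $\mathbb{N}^\mathbb{N}$ to obtain a closed $F \subseteq X \times Y \times \mathbb{N}^\mathbb{N}$ with $\Proj_{X \times Y}(F) = P$ and such that for each $(x,y) \in P$ there is a \emph{unique} $z$ with $(x,y,z) \in F$; by Lusin--Souslin, $z = g(x,y)$ for a Borel function $g : P \to \mathbb{N}^\mathbb{N}$. Push the ideals forward along the coordinate projection $\rho : Y \times \mathbb{N}^\mathbb{N} \to Y$: set $J_x := \{ B \subseteq Y \times \mathbb{N}^\mathbb{N} \mid \rho[B \cap F_x] \in I_x \}$. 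Each $J_x$ is a $\sigma$-ideal, and $F_x \notin J_x$ whenever $F_x \neq \emptyset$ because $\rho[F_x] = P_x$. Moreover $x \mapsto J_x$ is Borel-on-Borel relative to $\Proj_X(F) = \Proj_X(P)$: for Borel $B \subseteq Z \times X \times Y \times \mathbb{N}^\mathbb{N}$, uniqueness of $z$ gives $\rho[B_{z,x} \cap F_x] = \widetilde{B}_{z,x}$ for $\widetilde{B} := \{ (z,x,y) \mid (x,y) \in P,\ (z,x,y,g(x,y)) \in B \}$, and $\widetilde{B}$ is \emph{honestly} Borel (the preimage of $B$ under a Borel map defined on the Borel set $Z \times P$), so applying the Borel-on-Borel hypothesis for $(I_x)$ to $\widetilde{B}$ yields the claim. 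Finally, from a Borel uniformization of $F$ one gets a Borel uniformization of $P$: the map $(x,y,z) \mapsto (x,y)$ is injective on the Borel graph of that uniformization, so its image --- which is the graph of a uniformization of $P$ --- is Borel by Lusin--Souslin. Thus we may assume $P$ is closed (enlarging $Y$ to $Y \times \mathbb{N}^\mathbb{N}$).

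For (ii) and (iii), fix a countable open scheme $(U_s)_{s \in \mathbb{N}^{<\mathbb{N}}}$ for $Y$ as in the proof of Jankov--von Neumann: $U_{\langle\rangle} = Y$, $U_s = \bigcup_i U_{s \smf i}$, $\overline{U_{s \smf i}} \subseteq U_s$, and $\mathrm{diam}(U_s) \to 0$ along branches, so that nested intersections along branches are singletons contained in every $U_s$ on the branch. For $x \in \Proj_X(P)$, define $f(x)$ by recursion: given $s = f(x) \upharpoonright n$ with $P_x \cap U_s \notin I_x$ (true for $s = \langle\rangle$ by hypothesis), let $f(x)(n)$ be the least $i$ with $P_x \cap U_{s \smf i} \notin I_x$, which exists since $I_x$ is a $\sigma$-ideal and $U_s = \bigcup_i U_{s \smf i}$; then $f(x) := \bigcap_n U_{f(x) \upharpoonright n} \in P_x$, as each $P_x \cap U_{f(x) \upharpoonright n}$ is nonempty and $P_x$ is closed. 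Each set $\{ x \in \Proj_X(P) \mid P_x \cap U_t \notin I_x \}$ is Borel relative to $\Proj_X(P)$, by applying the Borel-on-Borel hypothesis to $P \cap (X \times U_t)$ with a one-point parameter space; hence each $\{ x \in \Proj_X(P) \mid f(x) \upharpoonright n = s \}$ is a finite Boolean combination of such sets and so is Borel relative to $\Proj_X(P)$, whence so is $f^{-1}(V)$ for every open $V \subseteq Y$, and therefore $\mathrm{graph}(f) = B_0 \cap (\Proj_X(P) \times Y)$ for some Borel $B_0$. Since $\mathrm{graph}(f) \subseteq P$ and $P$ is Borel, in fact $\mathrm{graph}(f) = B_0 \cap P$ is Borel; this is the desired uniformization.

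The step I expect to be the main obstacle is the bookkeeping in (i): because $\Proj_X(P)$ is merely analytic, the natural predicates live a priori only in ``Borel relative to an analytic set,'' and one must ensure that nothing genuinely worse creeps in --- in particular, that $\rho[B_{z,x} \cap F_x]$ is not secretly an honest projection along $\mathbb{N}^\mathbb{N}$, which would be analytic rather than Borel. This is exactly what the uniqueness of $z$ is for. Dually, the relative-Borel formalism never contaminates the conclusion: since the target $P$ is honestly Borel, any graph contained in $P$ that is Borel relative to $\Proj_X(P)$ is automatically honestly Borel.
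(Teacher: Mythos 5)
Your proof is correct and follows essentially the same strategy as the paper's: both adapt the classical large-section uniformization of \cite[Theorem 18.6]{Kechris_1997} by running a leftmost-branch selection in which the largeness condition ``$\notin I_x$'' replaces nonemptiness, with the Borel-on-Borel hypothesis (relative to the analytic set $\Proj_X(P)$) supplying the definability of the selection. The only cosmetic difference is that the paper pulls $P$ back to $\omega^\omega$ via a Borel injection and takes the leftmost path of the tree $\{s \mid (f[N_s])_x \notin I_x\}$, whereas you first lift $P$ to a closed set over $Y \times \mathbb{N}^{\mathbb{N}}$ and run a Lusin scheme there; your treatment of the pushforward ideals and the relative-Borel bookkeeping is a more careful spelling-out of the same argument.
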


\begin{proof}
Fix a Borel injection $f : \omega^\omega \rightarrow X \times Y$ such that $f[\omega^\omega] = P$.
For $s \in \omega^{<\omega}$, fix some Borel set $B_s \subseteq X$ such that
\[x \in B_s \quad \text{iff} \quad (f[N_s])_x \not\in I_x\]
for every $x \in \Proj_X(P)$.
Define a Borel map $x \mapsto T_x$ from $X$ to the (Polish) space of trees on $\omega$ such that 
\[s \in T_x \quad \text{iff} \quad x \in B_s\]
for every $s \in \omega^{<\omega}$.
Define $D \subseteq X$ to be the set of $x \in X$ such that $T_x$ codes a nonempty pruned tree. Then $D$ is Borel and $D \supseteq \Proj_X(P)$ and we can define a Borel map $h : D \rightarrow \omega^\omega$ sending $x$ to the left-most path in the tree coded by $T_x$. Then the map from $x$ to the unique $y$ such that $f(h(x)) = (x, y)$ is a Borel map from $D$ to $Y$ which restricts to a Borel uniformization of $P$.
\end{proof}

We recall the following definition.

\begin{definition}
An equivalence relation $E$ on a Polish space $X$ is \emph{idealistic} if there is a $E$-invariant, Borel-on-Borel map $x \mapsto I_{[x]_E}$ such that each $I_{[x]_E}$ is a $\sigma$-ideal on $X$ where $[x]_E \not\in I_{[x]_E}$.
\end{definition}

Note that equivalence relations induced by Borel actions of Polish groups are idealistic (see \cite[5.4.10]{Gao_2009}). 

The following theorem is similar to \cite[Theorem 4.1]{Gao_2001} and \cite[Lemma 3.7]{Kechris_Macdonald_2016}, however we need this stronger statement 

\begin{theorem}\label{th:borel_reduction_inverse}
Suppose $E_1$, $E_2$, and $F$ are equivalence relations on Polish spaces $X_1$, $X_2$, and $Y$ respectively, with $E_1$ analytic and $E_2$ Borel and idealistic. Let $f : Y \rightarrow X_1$ be a Borel reduction of $F$ to $E_1$ and assume also $F \le_B E_2$. Then there is a $E_1$-invariant Borel set $C \subseteq X_1$ such that $f[Y] \subseteq C$ and $E_1 \upharpoonright C \le_B E_2$.
\end{theorem}

\begin{proof}
Fix a Borel reduction $g : Y \rightarrow X_2$ from $F$ to $E_2$, and let $\Phi \subseteq \mathcal{P}(X_1 \times X_2)$ such that 
\[P \in \Phi \quad \text{iff} \quad \forall (x_1, x_2), (y_1, y_2) \in P, \; (x_1 E y_1 \rightarrow x_2 E_2 y_2).\] 

Define $P_0 \subseteq X_1 \times X_2$ by
\[P_0 := \{(x_1, x_2) \mid \exists y \in Y, f(y) E_1 x_1 \; \text{and} \; g(y) E_2 x_2\}. \]
Because $\Phi$ is $\BPi^1_1$-on-$\BSigma^1_1$ and $P_0\in \Phi$ is analytic, we can find some Borel $P_1 \supseteq P_0$ such that $P_1 \in \Phi$ by the First Reflection Theorem (see \cite[35.10]{Kechris_1997}). 

Fix a Borel-on-Borel map $x_2 \mapsto I_{[x_2]_{E_2}}$ witnessing the fact that $E_2$ is idealistic. For $x_1 \in \Proj_{X_1}(P_1)$, define an ideal $J_{x_1}$ on $X_2$ where 
\[A \in J_{x_1}\quad \text{iff} \quad \exists x_2, \; (x_1, x_2) \in P_1 \;  \text{and} \; A \in I_{[x_2]_{E_2}}. \]
Define
\[P := \{(x_1, x_2) \in P_1 \mid (P_1)_{x_1} \not\in J_{x_1}\} = \{(x_1, x_2) \in P_1 \mid (P_1)_{x_1} \not\in I_{[x_2]_{E_2}}\},\]
which is a Borel set, where the equality follows from the fact that $P_1 \in \Phi$.

We claim that the map $x \mapsto J_{x}$ is Borel-on-Borel relative to $\Proj_{X_1}(P)$. To see this, fix some Polish space $Z$ and a Borel subset $B \subseteq Z \times X_1 \times X_2$. Let
\[A_1 := \{(z, x_1) \in P \mid \exists x_2 \in X_2, (x_1, x_2) \in P \; \text{and} \; B_{z, x_1} \in I_{[x_2]_{E_2}}\}\]
and
\[A_2 := \{(z, x_1) \in Z \times X_1 \mid \forall x_2 \in X_2, \; ((x_1, x_2) \in P  \rightarrow B_{z, x_1} \in I_{[x_2]_{E_2}})\}.\]
By analytic separation, fix some Borel set $A \supseteq A_1$ such that $A \cap A_2 = \emptyset$. Observe that 
\[A_1 \cap (Z \times \Proj_{X_1}(P)) = A_2 \cap (Z \times \Proj_{X_1}(P)) = A \cap (Z \times \Proj_{X_1}(P)) \]
and so $A_1$ is Borel relative to $\Proj_{X_1}(P)$.

By Proposition \ref{pr:large_section}, we can fix some Borel uniformization $h : \Proj_{X_1}(P) \rightarrow X_2$ of $P$. In particular, $B_0 := \Proj_{X_1}(P)$ is Borel. By the definition of $\Phi$, we have that $h$ is a reduction of $E_1 \upharpoonright B_0$ to $E_2$. Finally, we define $B := B_0 \cap B_0^{*G}$, which is Borel by basic properties of the Vaught transform. This is $G$-invariant and contains $f[Y]$ because $P_0 \subseteq P$. Thus $h \upharpoonright B$ is our desired reduction.
\end{proof}


We note that the assumption that $E_2$ is idealistic appears to be necessary. For example, if $F$ is a smooth equivalence relation and $E_1$ is equality on the reals, and $E_2 = F$, then the conclusion would imply the existence of a Borel transversal of $F$. However a smooth equivalence relation which is not idealistic need not have a Borel transversal. Given any closed subset $C \subseteq Z_1 \times Z_2$, one can define the relation $E_C$ on $C$ where $(z_1, z_2) E_C (z_1', z_2')$ iff $z_1 = z_1'$. This is clearly smooth as witnessed by the reduction $(z_1, z_2) \mapsto z_1$. The Borel transversals of $E_C$ are exactly the Borel uniformizations of $C$, which need not exist in general.

The previous result allows us to prove the following generalization of Friedman's theorem, which was originally proved in the case that $G$ is $S_\infty$.

\begin{corollary}\label{cor:upper_bound}
Suppose $E$ is a Borel equivalence relation which is classifiable by a non-Archimedean Polish group $G$. Then $E \le E^G_X$ for some Polish $G$-space $X$ with $E^G_X$ Borel.
\end{corollary}

\begin{proof}
Fix a Polish $G$-space $X$ such that $E \le_B E^G_X$. Because $G$ is a closed subgroup of $S_\infty$, by \cite[Theorem 2.3.5]{Becker_Kechris_1996} there is a Polish $S_\infty$-space $Y$ such that $E^G_X \sim_B E^{S_\infty}_Y$. By a theorem of Friedman, see \cite[Theorem 1.5]{Friedman_2000}, we can find another Polish $S_\infty$-space $Z$ such that $E \le_B E^{S_\infty}_Z$ and $E^{S_\infty}_Z$ is Borel.

By Theorem \ref{th:borel_reduction_inverse}, we can find an invariant Borel set $C \subseteq X$ such that $E^G_X \upharpoonright C \le_B E^{S_\infty}_Z$. By Proposition \ref{pr:hjorth_topology_change}, we can find a compatible Polish topology in which $C$ is closed, and in particular is a Polish $G$-space in the relative topology. Thus $E \le E^G_C$ with $E^G_C$ Borel.
\end{proof}

Finally, assuming $\mcC$ is a class of non-Archimedean Polish groups, we observe that in order to show that a Borel equivalence relation $E$ is classifiable by $\mcC$-actions of potential complexity $\Gamma$, it is (to some degree) enough to show these two properties separately.

It is obvious from previous remarks that given a Borel complexity class $\Gamma$, an equivalence relation $E$ is potentially $\Gamma$ iff it is Borel-reducible to an equivalence relation which has exact complexity $\Gamma$. We say that $E$ has the (seemingly stronger) property of being \textbf{idealistically $\Gamma$} iff there is an idealistic equivalence relation $E_I$ which has exact complexity $\Gamma$, and $E \le_B E_I$. Under this assumption the proof goes through.

\begin{corollary}\label{cor:upper_bound2}
Suppose $E$ is an equivalence relation which is idealistically $\Gamma$ and classifiable by a non-Archimedean Polish group $G$. Then $E$ is classifiable by $G$-actions of complexity $\Gamma$.
\end{corollary}

\begin{proof}
Fix a Polish $G$-space $X$ such that $E \le_B E^G_X$. By Theorem \ref{th:borel_reduction_inverse}, we can find an invariant Borel set $C \subseteq X$ such that $E^G_X \upharpoonright C \le_B E^{S_\infty}_Z$. By Proposition \ref{pr:hjorth_topology_change}, we can find a compatible Polish topology in which $C$ is closed, and in particular is a Polish $G$-space in the relative topology. Finally, because $E^{S_\infty}_Z$ is potentially $\Gamma$ then so is $E^G_C$, and thus by Theorem \ref{th:top_change} there is another compatible Polish topology in which $C$ is still a Polish $G$-space and $E^G_C$ has exact complexity $\Gamma$.
\end{proof}

One would hope that the assumption can be weakened to $E$ being potentially $\Gamma$ instead of just idealistically $\Gamma$. 

\begin{question}
If an equivalence relation $E$ is potentially $\Gamma$ and Borel-reducible to a Borel idealistic equivalence relation, must $E$ be idealistically $\Gamma$?
\end{question}

This appears to be related to the deep Kechris-Louveau conjecture regarding equivalence relations which are not above the universal hypersmooth equivalence relation $E_1$ with respect to $\le_B$ \cite{Kechris_Louveau_1997}.

\section{The Hjorth analysis of Polish group actions}

In \cite{Hjorth_2010}, Hjorth outlined a method of extending the Scott analysis of isomorphism of countable structures to arbitrary Polish group actions. That work was continued in \cite{Drucker_2021}. We refer to this as the {\bf Hjorth analysis of Polish group actions}. While we will primarily be concerned with actions of non-Archimedean Polish groups, which can be done within the context of the conventional Scott analysis, it takes just as much work to describe the more general analysis, and it is more conventient to develop the more general theory for future work into potential Borel complexity.

For the rest of the section, let $\Gspace$ be a Polish group and $\Xspace$ a Polish $\Gspace$-space.

\begin{definition}[Hjorth] For open $U, V \subseteq G$ and $x, y \in X$, we say that
$(x, \mcU) \le_1 (y, \mcV)$ iff $\mcU \cdot x \subseteq \overline{\mcV \cdot y}$.

For $\alpha \ge 1$, $(x, \mcU) \le_{\alpha+1} (y, \mcV)$ iff for every nonempty open $\mcU' \subseteq \mcU$, there is some nonempty open $\mcV' \subseteq \mcV$ such that $(y, \mcV') \le_\alpha (x, \mcU')$.

Finally, for limit $\lambda$, $(x, \mcU) \le_\lambda (y, \mcV)$ iff for every $1 \le \alpha < \lambda$, $(x, \mcU) \le_\alpha (y, \mcU)$.
\end{definition}

\begin{proposition}[Hjorth]\label{pr:hjorth_inv}
The relation $(x, \mcU) \le_\alpha (y, \mcV)$ holds iff for every $\BPi^0_\alpha$ set $A$, $y \in A^{* \mcV} \Rightarrow x \in A^{* \mcU}$.
\end{proposition}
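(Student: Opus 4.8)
I would prove the equivalence by induction on $\alpha\ge 1$, following the three cases of the definition of $\le^\alpha$. Throughout I will use the standard facts about Vaught transforms: the $*$-transform commutes with countable intersections, the $\Delta$-transform commutes with countable unions, ``$x\notin A^{*\mcW}$'' is literally the same as ``$x\in(\Xspace\setminus A)^{\Delta\mcW}$'', and for Borel $A$ one has $A^{\Delta\mcW}=\bigcup\{A^{*\mcW'}\mid \mcW'\subseteq\mcW\text{ nonempty basic open}\}$ (a Borel set is nonmeager in $\mcW$ iff it is comeager in some nonempty basic open subset, the orbit maps being continuous), together with monotonicity of both transforms in the ambient open set. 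I will also use the bookkeeping facts, valid for $\alpha\ge 1$, that the $\BSigma^0_{\alpha+1}$ sets are exactly the countable unions of $\BPi^0_\alpha$ sets (equivalently, the $\BPi^0_{\alpha+1}$ sets are exactly the countable intersections of $\BSigma^0_\alpha$ sets), that $\BSigma^0_\alpha\subseteq\BPi^0_{\alpha+1}$, and that $\BSigma^0_\alpha$ is closed under countable unions. It is convenient to also record the contrapositive form of the statement: $(x,\mcU)\le^\alpha(y,\mcV)$ holds iff every $\BSigma^0_\alpha$ set $C$ with $x\in C^{\Delta\mcU}$ also satisfies $y\in C^{\Delta\mcV}$.

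In the base case $\alpha=1$, the point is that when $A$ is closed the set $\{g\in\mcV\mid g\cdot y\in A\}$ is a closed subset of the Baire space $\mcV$, hence comeager iff equal to $\mcV$; so $y\in A^{*\mcV}$ iff $\overline{\mcV\cdot y}\subseteq A$. Applying this with $A=\overline{\mcV\cdot y}$ turns the Vaught condition into $\mcU\cdot x\subseteq\overline{\mcV\cdot y}$, and monotonicity of the $*$-transform gives the converse.

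For the successor step I assume the statement (in both forms) at level $\alpha$. For the forward direction, suppose $(x,\mcU)\le^{\alpha+1}(y,\mcV)$, let $A$ be $\BPi^0_{\alpha+1}$ with $y\in A^{*\mcV}$, and suppose toward a contradiction $x\notin A^{*\mcU}$. Writing $\Xspace\setminus A=\bigcup_n D_n$ with each $D_n$ in $\BPi^0_\alpha$, distributivity of the $\Delta$-transform gives an $n$ and a nonempty basic open $\mcU'\subseteq\mcU$ with $x\in D_n^{*\mcU'}$; the definition of $\le^{\alpha+1}$ supplies $\mcV'\subseteq\mcV$ with $(y,\mcV')\le^\alpha(x,\mcU')$, and the inductive hypothesis applied to $D_n$ forces $y\in D_n^{*\mcV'}$, contradicting $y\in A^{*\mcV}$ (hence $y\in(\Xspace\setminus D_n)^{*\mcV}$) after restriction to $\mcV'$. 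The backward direction is the crux. Assuming the Vaught condition, I fix a nonempty basic open $\mcU'\subseteq\mcU$ and must produce a single nonempty open $\mcV'\subseteq\mcV$ with $(y,\mcV')\le^\alpha(x,\mcU')$, i.e. one that works (via the contrapositive form of the inductive hypothesis) for all $\BSigma^0_\alpha$ sets at once. If no such $\mcV'$ existed, every basic open $\mcV'\subseteq\mcV$ would carry a $\BSigma^0_\alpha$ set $C_{\mcV'}$ with $y\in C_{\mcV'}^{\Delta\mcV'}$ and $x\notin C_{\mcV'}^{\Delta\mcU'}$; by a maximal-antichain (exhaustion) argument I extract countably many pairwise disjoint basic opens $\mcW_j\subseteq\mcV$ with dense union, each equipped with a $\BSigma^0_\alpha$ set $C_j$ that is $\mcW_j$-comeager for $y$ and has $x\notin C_j^{\Delta\mcU'}$. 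Then $C:=\bigcup_j C_j$ is $\BSigma^0_\alpha$, hence $\BPi^0_{\alpha+1}$, and $y\in C^{*\mcV}$ because comeagerness on each $\mcW_j$ assembles to comeagerness on the comeager set $\bigcup_j\mcW_j$; the Vaught condition yields $x\in C^{*\mcU}$, and restricting to $\mcU'$ and using distributivity of the $\Delta$-transform over $C=\bigcup_j C_j$ produces a $j$ with $x\in C_j^{\Delta\mcU'}$, a contradiction. I expect this last assembly --- packaging the family of counterexamples into one $\BSigma^0_\alpha$ set via the antichain --- to be the main obstacle; the rest is routine Vaught-transform bookkeeping.

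For a limit $\lambda$, the forward direction writes $A\in\BPi^0_\lambda$ as $\bigcap_n D_n$ with each $D_n$ in some $\BPi^0_{\alpha_n}$ with $\alpha_n<\lambda$, and runs the forward argument above using $(x,\mcU)\le^{\alpha_n}(y,\mcV)$ and the inductive hypothesis at level $\alpha_n$. The backward direction is immediate: every $\BPi^0_\alpha$ set with $\alpha<\lambda$ is a $\BPi^0_\lambda$ set, so the Vaught condition at level $\lambda$ already gives it at every level $\alpha<\lambda$, which is exactly $(x,\mcU)\le^\lambda(y,\mcV)$.
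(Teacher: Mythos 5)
Your proof is correct, but note that the paper does not actually prove this proposition at all: its ``proof'' is the single citation \cite[Lemma 1.9]{Hjorth_2010}. So your self-contained induction is doing genuinely more work than the text it replaces, and as far as I can tell it reconstructs essentially Hjorth's original argument. All three cases check out: the base case correctly reduces to the observation that for closed $A$ the set $\{g \in \mcV \mid g\cdot y \in A\}$ is relatively closed, hence comeager in $\mcV$ only when it equals $\mcV$; the forward successor direction is routine transform bookkeeping (and correctly respects the swap of $x$ and $y$ built into the definition of $\le^{\alpha+1}$); and the crux, the backward successor direction, is handled properly by your maximal-antichain assembly of the local counterexamples into a single set $C=\bigcup_j C_j$, which is $\BSigma^0_\alpha$ and hence $\BPi^0_{\alpha+1}$, with $y\in C^{*\mcV}$ because each $C_j$ is comeager for $y$ on $\mcW_j$ and $\bigcup_j\mcW_j$ is dense open in $\mcV$. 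The step that makes this work --- upgrading $y\in C_{\mcV'}^{\Delta\mcV'}$ to $y\in C_{\mcV'}^{*\mcW}$ on a smaller basic open via localization for Borel sets --- is present in your sketch. One small point to make explicit: the definition of $\le^{\alpha+1}$ quantifies over \emph{all} nonempty open $\mcU'\subseteq\mcU$, while you only treat basic open $\mcU'$; this is harmless, since if $(y,\mcV')\le^\alpha(x,\mcU'')$ for some basic $\mcU''\subseteq\mcU'$ then $(y,\mcV')\le^\alpha(x,\mcU')$ follows from the inductive characterization and monotonicity of the $*$-transform in the open set ($x\in B^{*\mcU'}$ implies $x\in B^{*\mcU''}$), but the sentence should be said. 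In the limit case the forward direction is cleaner if phrased directly via commutation of the $*$-transform with the countable intersection $A=\bigcap_n D_n$, $D_n\in\BPi^0_{\alpha_n}$ with $\alpha_n<\lambda$, rather than by rerunning the successor contradiction, but either works.
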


\begin{proof}
For the base case, observe that
\begin{align*}
&(x, U) \le_1 (y, V)\\
\iff &\overline{Ux} \subseteq \overline{Vy} \\
\iff &\forall \; \text{open} \; O, \; O \cap (U \cdot x) \neq \emptyset \Rightarrow O \cap (V \cdot y) \neq \emptyset\\
\iff &\forall \; \text{open} \; O, \; x \in O^{\Delta U} \Rightarrow  y \in O^{\Delta V}\\
\iff &\forall \; \text{closed} \; C, \; y \in C^{* V} \Rightarrow  x \in C^{* U}.
\end{align*}

For limit $\alpha$, assuming the claim is true for all $\beta < \alpha$, we have that
\begin{align*}
    &(x, U) \le_\alpha (y, V) \\
    \iff &\forall \beta < \alpha, \; (x, U) \le_\beta (y, V) \\
    \iff &\forall \beta < \alpha, \; \forall \; \BPi^0_\beta \; \text{set} \; A, \; y \in A^{*V} \Rightarrow x \in A^{*U}\\
    \iff &\forall \; \BPi^0_\alpha \; \text{set} \; A, \; y \in A^{*V} \Rightarrow x \in A^{*U}.
\end{align*}
The last equivalence follows from the fact that the $\BPi^0_\alpha$ sets $A$ are exactly the countable intersections $\bigcap_n A_n$ where for each $A_n$ is $\BPi^0_{\beta_n}$ for some $\beta_n < \alpha$, and $A^{*W} = (\bigcap_n A_n)^{*W} = \bigcap_n A_n^{*W}$ for every open $W \subseteq G$.

For the successor case $\alpha = \beta+1$, assuming the claim is true for $\beta$, we argue each direction separately. We first observe that by the induction hypothesis we have that $(x, U) \le_\alpha (y, V)$ iff for every nonempty open $U' \subseteq U$ there exists some nonempty $V' \subseteq V$ such that for every $\BPi^0_\beta$ set $A$, $x \in A^{*U'}$ implies $y \in A^{*V'}$.

Suppose $(x, U) \le_\alpha (y, V)$ and $A$ is a $\BPi^0_\alpha$ set such that $x \not\in A^{*U}$. We will show that $y \not\in A^{*U}$. Write $A = \bigcap_n A_n$ for $\BSigma^0_\beta$ sets $A_n$. Then we can fix some nonempty open $U' \subseteq U$ and some $n$ such that $x \in (X 
\setminus A_n)^{* U'}$. By the assumption find some nonempty open $V' \subseteq V$ such that $(y, V') \le_\beta (x, U')$ in which case $y \in (X \setminus A_n)^{*U'}$ and thus $y \not\in A^{*V}$ as desired.

On the other hand, suppose $(x, U) \not\le_\alpha (y, V)$. Then by the assumption we can find a nonempty open $U' \subseteq U$ such that, fixing an enumeration $V'_n$ of nonempty basic open subsets of $V$, we can find $\BPi^0_\beta$ sets $A_n$ such that $x \in A_n^{*U'}$ but $y\in (X \setminus A_n)^{\Delta V_n'}$. Define $A = \bigcup_n (X \setminus A_n)$, which is $\BSigma^0_\beta$ and thus $\BPi^0_\alpha$. Observe that $y \in A^{*V}$ but $x \not\in A^{*U}$ by basic properties of the Vaught transforms.
\end{proof}

A theorem similar to the following appears in \cite{HKL_1998}, though it is specific to actions of non-Archimedean Polish groups. We prove a more general result, for arbitrary Polish group actions. For any $x \in X$ and open $V \subseteq G$, say that $G \cdot x$ is \textbf{densely $\BPi^0_\alpha$ in $V \cdot x$} iff there is a $\BPi^0_\alpha$ set $A \subseteq G \cdot x$ such that $x \in A^{*V}$.

\begin{proposition}\label{pr:isom_inv}
For any open $V \subseteq G$ and any $x, y \in X$ the following are all equivalent, excluding statement (1) in the special case that $\alpha = 0$:
\begin{enumerate}
    \item For every $\BPi^0_\alpha$ set $C$, $x \in C^{\Delta V}$ iff $y \in C^{\Delta V}$;
    \item For every $\BSigma^0_{\alpha+1}$ set $C$, $x\in C^{\Delta V}$ iff $y \in C^{\Delta V}$; and
    \item For every $\BPi^0_{\alpha+1}$ set $C$, $x \in C^{* V}$ iff $y \in C^{* V}$.
\end{enumerate}

If moreover both $G \cdot x$ and $G \cdot y$ are densely $\BPi^0_{\alpha+2}$ in $V \cdot x$ and $V \cdot y$ respectively, then any of these statements imply that $y \in G \cdot x$.
\end{proposition}

\begin{proof}
To see that (3) implies (2), suppose $C$ is $\BSigma^0_{\alpha+1}$ and $x \in C^{\Delta V}$. Then we have $x \not\in (X \setminus C^{\Delta V})$. By basic properties of the Vaught transform (see \cite[Lemma 5.1.7]{Becker_Kechris_1996}), we compute that $X \setminus C^{\Delta V} = (X \setminus C)^{* V}$ is $\BPi^0_{\alpha+1}$. Thus we have $y\not\in (X \setminus C)^{*V}$ and therefore $y \in C^{\Delta V}$. The fact that (2) implies (3) is the same argument.

Statement (2) trivially implies (1). To see that (1) implies (2), suppose again that $C$ is  $\BSigma^0_{\alpha+1}$ and $x \in C^{\Delta V}$. 
Write $C$ as a countable union $C = \bigcup_k C_k$ where each $C_k$ is $\BPi^0_\alpha$. 
By the Baire category theorem we have that $x\in C_k^{\Delta V}$ for some $k$. Thus we have that $y \in C_k^{\Delta V}$ and therefore $y\in C^{\Delta V}$.

We now show that under the additional assumption, statement (2) implies that $y \in G \cdot x$. Let $A \subseteq G \cdot x$ and $B \subseteq G \cdot y$ be $\BPi^0_{\alpha+2}$ sets such that $x \in A^{*V}$ and $y \in B^{*V}$.

By the assumption on the Borel complexities, we may write 
\[A = \bigcap_{k \in \omega} \bigcup_{\ell \in \omega} A_{k, \ell}\]
and 
\[B = \bigcap_{k \in \omega} \bigcup_{\ell \in \omega} B_{k, \ell}\]
where each $A_{k, \ell}$ and $B_{k \ell}$ are $\BPi^0_\alpha$. By basic properties of the Vaught transforms, fixing some countable basis $\mathcal{B}$ for the topology on $\Gspace$, we may write:
\[A = A^{* \Gspace} = \bigcap_{k \in \omega} \left[\bigcup_{\ell \in \omega} A_{k, \ell}\right]^{* \Gspace} = \bigcap_{k \in \omega} \bigcap_{W} \left[\bigcup_{\ell \in \omega} A_{k, \ell}\right]^{\Delta W} = \bigcap_{k \in \omega} \bigcap_{W} \bigcup_{\ell \in \omega} A_{k, \ell}^{\Delta W} \]
where $W$ ranges over nonempty basic open subsets of $G$.
Similarly, we may write:
\[G \cdot y = \bigcap_{k \in \omega} \bigcap_{W} \bigcup_{\ell \in \omega} B_{k, \ell}^{\Delta W}. \]

We seek a new topology $\sigma$ on $\Xspace$, in which $(\Xspace, \sigma)$ is still a Polish $\Gspace$-space, and $A_{k, \ell}^{\Delta W}$ and $B_{k, \ell}^{\Delta W}$ are open for every $k, \ell \in \omega$ and basic open $W$. Under such a topology, $A$ and $B$ will be $\BPi^0_2$ by the previous paragraph. By Proposition \ref{pr:hjorth_topology_change}, such a topology exists, in which every basic open is $\BSigma^0_{\alpha+1}$ with respect to the original topology. By the assumption (3), $\overline{V\cdot x}^\sigma = \overline{V \cdot y}^\sigma$ and $G \cdot x$ and $G \cdot y$ are comeager in $\overline{V\cdot x}^\sigma$ and $\overline{V \cdot y}^\sigma$ respectively. Thus they must have nonempty intersection, in particular $y \in G \cdot x$.
\end{proof}

\section{Actions of non-Archimedean TSI Polish groups}

Given a sequence $(\Delta_i)_{i \in \omega}$ of countable groups, as in \cite{Solecki_1995}, define $\Delta^n$ to be the product $\prod_{i \le n} \Delta_i$. 
A {\bf group tree} is a subset $T \subseteq \bigcup_{n \in \omega} \Delta^n$ for some sequence $(\Delta_i)_{i \in \omega}$ such that $T$ is closed under initial segments (i.e. $T$ is a tree), and for every $n$, $T^n := T \cap \Delta^n$ is a subgroup of $\Delta^n$.

For any $\oa \in T$, let $\ell(\oa)$ be the length of $\oa$. Given two sequences $\oa$ and $\ob$ of the same length $n$, let $\oa \cdot \ob$ be the group product of $\oa$ and $\ob$ as elements of $T^n$. We will also let $\oa^{-1}$ refer to the inverse of $\oa$ as an element of $T^n$, and $1^n$ refer to the identity element of $T^n$. Let $T_\oa$ be the set of $\ob$ such that $\oa \smf \ob \in T$, where $\oa \smf \ob$ is the concatination of $\oa$ and $\ob$.
Similarly, let $T_\oa^k$ be the set of $\ob \in T_\oa$ of length exactly $k$.

If $T \subseteq \bigcup_{n \in \omega} \Delta^n$ is a group tree, define $\Gspace_T$ to be the closed (and hence Polish) subgroup of $\prod_{n \in \omega} \Delta_n$ consisting of the branches through $T$.
For every $\oa \in T$, let
\[\mcV^T_\oa := \{g \in \Gspace_T \mid \oa \sqsubseteq g\}.\]
Then $\{\mcV^T_{1^n} \mid n \in \omega\}$ forms a countable local basis for the identity of $\Gspace_T$ of open normal subgroups, and $\{\mcV^T_\oa \mid \oa \in T\}$ is a countable basis. We will assume, without loss of generality, that $T$ is \emph{pruned}, i.e. every sequence in $T$ can be properly extended in $T$, so that each $\mcV^T_\oa$ is nonempty when $a \in T$. We use $A^{*a}$ as shorthand notation for $A^{*V}$ where $V = V^T_a$.

Any non-Archimedean TSI Polish group $G$ is isomorphic to a closed subgroup of $\prod_{i = 0}^\infty \Delta_i$ for some choice of $(\Delta_i)_{i \in \omega}$ (see e.g. \cite[Lemma 12.1]{Hjorth_Kechris_1997}). This tells us that we can write $G$ as $G_T$ for some group tree $T$.

\subsection{Hjorth analysis for non-Archimedean TSI Polish groups}

Assume that $G$ is a non-Archimedean TSI Polish group, so fix a group tree $T$ such that $G_T$ is homeomorphic to $G$. For the rest of this section, fix a Polish $G_T$-space $X$.

We will now define relations $\sim^{T, X}_\alpha$ on $X$ for $\alpha < \omega_1$. 
We will define them recursively on $\alpha$, and the recursive definitions will actually require us to define them on the slightly larger space $X \times T$. 
We may loosely view the relation $\sim^{T, X}_\alpha$ as asserting $\alpha$-back-and-forth equivalence in the sense of the Scott analysis (see \cite[Chapter 12]{Gao_2009}).
We will also need to define ``relativized" relations $\sim^{T, X}_{\alpha, n}$ for $n \in \omega$, where $\sim^{T, X}_\alpha$ is identified with $\sim^{T, X}_{\alpha, 0}$.

\begin{definition}
By transfinite recursion, we define relations $\sim^{T, X}_{\alpha, n}$ on $X \times T$ for ordinals $\alpha$ and $n \in \omega$. Say that
\[(x, \oa) \sim^{T, X}_0 (y, \ob) \quad \text{iff} \quad \overline{\mcV^T_\oa \cdot x} = \overline{\mcV^T_\ob \cdot y}\; \text{and} \; \ell(\oa) = \ell(\ob). \]
Assuming $\sim^{T, X}_\alpha$ is defined, we say
\[(x, \oa) \sim^{T, X}_{\alpha, n} (y, \ob) \quad \text{iff} \quad \exists \oc \in T^n_\oa, \exists \od \in T^n_\ob, (x, \oa \smf \oc) \sim^{T, X}_{\alpha} (y, \ob \smf \od).\]
Finally, assuming $\sim^{T, X}_{\beta, m}$ is defined for every $\beta < \alpha$ and $m \in \omega$, then
\[(x, \oa) \sim^{T, X}_{\alpha} (y, \ob) \quad \text{iff} \quad \forall \beta < \alpha, \forall m \in \omega, \; (x, \oa) \sim^{T, X}_{\beta, m} (y, \ob). \]
\end{definition}

We collect a few useful facts about these relations, which are easily verified.
\begin{lemma}\label{sim_facts}
For any ordinal $\alpha$ and $n \in \omega$:
\begin{enumerate}
    \item If $(x, \oa) \sim^{T, X}_{\alpha, n} (y, \ob)$ then $(x, \oa \cdot \oc) \sim^{T, X}_{\alpha, n} (y, \ob \cdot \oc)$ whenever $\ell(c) = \ell(a) = \ell(b)$, and thus in particular the relations $\sim^{T, X}_{\alpha, n}$ are equivalence relations;
    \item If $(x, \oa) \sim^{T, X}_{\alpha, n} (y, \ob)$ then $(x, \oa) \sim^{T, X}_{\alpha, m} (y, \ob)$ for any $m < n$; and
    \item If $(x, \oa) \sim^{T, X}_{\alpha, n} (y, \ob)$ then $(x, \oa) \sim^{T, X}_{\beta, m} (y, \ob)$ for any $\beta < \alpha$ and any $m \in \omega$.
\end{enumerate}
\end{lemma}

Working towards a lower bound, the next lemma shows the relationship between the relations $\sim^{T, X}_\alpha$ and the Hjorth relations.

\begin{lemma}\label{lm:hjorth_isom}
Fix $(x, \oa), (y, \ob) \in X \times T$ with $\ell(\oa) = m$, $\ell(\ob) = n$, and $m \ge n$ such that $(x, \oa\upharpoonright n) \sim^{T, X}_{\alpha} (y, \ob)$. Then we have that $(x, \mcV^T_{\oa}) \le_{\alpha'} (y, \mcV^T_{b})$ where $\alpha' = \alpha+1$ for $\alpha < \omega$ and $\alpha' = \alpha$ for $\alpha \ge \omega$.
\end{lemma}

\begin{proof}



The base case is trivial, so we proceed to the induction step. Suppose for some $1 \le n \in \omega$ the claim is true below $n$. Assume first that 
\[(x, \oa \upharpoonright n) \sim^{T, X}_n (y, \ob).\]
We will show that 
\[(x, \mcV^T_{a}) \le_{n+1} (y, \mcV^T_{b}).\]
Fix an arbitrary nonempty open subset $\mcW$ of $\mcV^T_{a}$. 
Find some $c \in T_a$ such that $\mcW \supseteq \mcV^T_{\oa \smf \oc}$. 
By assumption we can find some $\od \in T_\ob$ such that 
\[(x, \oa \smf \oc) \sim^{T,X}_{n-1} (y, \ob \smf \od),\]
and thus 
\[(y, \mcV^T_{\ob \smf \od}) \le_\beta (x, \mcV^T_{\oa \smf \oc})\]
by the induction hypothesis, and so
\[ (y, V^T_{b \smf d}) \le_\beta (x, W).\]

The rest is easy.

\end{proof}


The next two propositions collect what we have proved about the relations $\sim^{T, X}_\alpha$ and $\sim^{T, X}_{\alpha, n}$.

\begin{proposition}\label{pr:isom_sim}
For any $(x, a), (y, b) \in X \times T^k$, $k < \omega$, $\lambda < \omega_1$ a limit ordinal and $n \in \omega$,
\begin{enumerate}
    \item If $(x, a) \sim^{T, X}_n (y, \ob)$, then for any $\BPi^0_{n+1}$ set $C$, $x \in C^{* a} \; \text{iff} \; y \in C^{* b}$. If in addition $G \cdot x$ and $G \cdot y$ are densely $\BPi^0_{n+2}$ in $\overline{V_{1^k} x}$ and $\overline{V_{1^k} y}$ respectively, then $y \in G_T x$; and
    \item If $(x, a) \sim^{T, X}_{\lambda+n} (y, \ob)$, then for any $\BPi^0_{\lambda+n}$ set $C$, $x \in C^{* a} \; \text{iff} \; y \in C^{* b}$. If in addition $G \cdot x$ and $G \cdot y$ are densely $\BPi^0_{\lambda+n+1}$ in $\overline{V_{1^k} x}$ and $\overline{V_{1^k} y}$ respectively, then $y \in G_T x$.
\end{enumerate}

In particular, if $E^G_X$ is $\BPi^0_{n+2}$ and $x \sim^{T, X}_n y$ then $y \in G_T x$. If $E^G_X$ is $\BPi^0_{\lambda+n+1}$ and $x \sim^{T, X}_{\lambda+n} y$ then $y \in G_T x$.
\end{proposition}

\begin{proof}
This is just Proposition \ref{pr:isom_inv}, Proposition \ref{pr:hjorth_inv}, and Lemma \ref{lm:hjorth_isom}.
\end{proof}

In the next proposition, define
\[A^{T, X}_{n, k} := \{x \in X \mid G\cdot x \; \text{is densely} \; \BPi^0_{n+2} \; \text{in} \; V_{1^k}x \}\]
for $n \in \omega$ and $k \in \omega$, and define
\[A^{T, X}_{\lambda+n, k} := \{x \in X \mid G\cdot x \; \text{is densely} \; \BPi^0_{\lambda+n+1} \; \text{in} \; V_{1^k}x \}\]
for limit $\lambda$, $n \in \omega$ and $k \in \omega$.

\begin{proposition}\label{pr:isom_sim2}
For any $x, y \in X$, $k < \omega$, $\lambda < \omega_1$ a limit ordinal and $n \in \omega$.
\begin{enumerate}
    \item If $x, y \in A_{n, k}^{T, X}$ and $x \sim^{T, X}_{n, k} y$ then $y \in G_T \cdot x$;
    \item If $x, y \in A_{\lambda+n, k}^{T, X}$ and $x \sim^{T, X}_{\lambda+n, k} y$ then $y \in G_T \cdot x$.
\end{enumerate}

\end{proposition}

\begin{proof}
We only prove statement (1), as statement (2) is essentially the same.

Fix $\BPi^0_{n+2}$ sets $A \subseteq G_T \cdot x$ and $B \subseteq G_T \cdot x$ such that $x \in A^{*1^k}$ and $y \in B^{*1^k}$, and fix $a, b \in T^k$ such that $(x, a) \sim^{T, X}_n (y, b)$. 
For any $g \in V_a$ and $h \in V_b$, we have $(g^{-1}x, 1^k) \sim^{T, X}_n (h^{-1} y, 1^k)$.
By Proposition \ref{pr:isom_sim}, $y \in G_T x$.

\end{proof}

\subsection{A universal $G_T$-space}

Let $\mcC$ represent Cantor space. Given $c$ a partial functon from $T$ to $2^{<\omega}$, let $\mathcal{N}_c$ be the set of $x \in \mcC^T$ such that for every $\oa$ in the domain of $f$, $c(a) \sqsubseteq x(\oa)$. The family of such $\mathcal{N}_c$ forms a countable basis for a Polish topology on $\mcC^T$.  

We will be concerned with the Polish space $\mcC^T$ of functions from the countable discrete space $T$ to $\mcC$ with the pointwise convergence topology.
There is a natural action of $G_T$ on $\mcC^T$ by shifts, i.e.
\[ g \cdot x = \bfx^g, \; \text{where} \; \bfx^g(\ob) = x((g \upharpoonright \ell(\ob))^{-1}\ob). \]
which is continuous with respect to this topology.

\begin{theorem}\label{th:universal_map}
Let $\Xspace$ be a Polish $\Gspace_T$-space. Then there is a Borel injective map $f : \Xspace \rightarrow \mcC^T$ which is equivariant, i.e.
\[ \forall g \in \Gspace_T, \forall x \in \Xspace, g \cdot f(x) = f(g \cdot x). \]
\end{theorem}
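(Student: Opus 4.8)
The plan is to build the equivariant Borel injection $f : \Xspace \to \R^T$ by assigning to each $x \in \Xspace$ and each $\oa \in T$ a real number that records how $x$ (suitably translated by $\oa$) sits inside a fixed countable algebra of Borel sets, in a way that is invariant enough to be equivariant but fine enough to be injective. This is the standard strategy for proving that $\isom$ on a space of countable structures is a universal orbit equivalence relation (as in the Becker--Kechris / Hjorth--Kechris--Louveau treatment of $S_\infty$), adapted to the group-tree setting via the Vaught transforms $A^{\Delta \oa}$ and $A^{*\oa}$ introduced in the preliminaries.

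\textbf{Step 1: fix a separating family.} Choose a countable family $\{A_n : n \in \omega\}$ of Borel subsets of $\Xspace$ that separates points of $\Xspace$ and is closed under the relevant Boolean and Vaught-transform operations; concretely, take a countable basis for a Polish topology on $\Xspace$ making the $\Gspace_T$-action continuous, and close it off under the maps $A \mapsto A^{\Delta \mcV^T_{1^n}}$, complementation, and finite intersections. The point of including the Vaught transforms along the normal subgroups $\mcV^T_{1^n}$ is that these transforms interact well with the shift action on $\R^T$.

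\textbf{Step 2: define $f$.} Fix a Borel injection $\R \hookleftarrow \{0,1\}$-indexed data, e.g. pick reals $r_S \in \R$ for each $S \subseteq \omega$ coding $S$ in a Borel way (or more simply let the $\oa$-coordinate of $f(x)$ itself be a real coding a subset of $\omega$). Then set
\[ f(x)(\oa) := r_{\{n \,:\, \oa^{-1} \cdot x' \in A_n \text{ for appropriate representatives}\}}, \]
where the subset of $\omega$ attached to coordinate $\oa$ records the ``local type'' of $x$ as seen through the group element data at node $\oa$ --- more precisely, one records for each $n$ whether $x \in A_n^{\Delta \oa}$ (or $A_n^{*\oa}$). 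The key computation is that because the $\mcV^T_{1^n}$ are \emph{normal} and the transforms were built into the family, one gets the cocycle-type identity $g \cdot x \in A^{\Delta \oa} \iff x \in A^{\Delta ((g\restriction \ell(\oa))^{-1} \cdot \oa)}$, which is exactly what is needed to match the shift action $g \cdot (\bfx,\oa) = (\bfx^g, (g\restriction \ell(\oa))\cdot \oa)$ on $\R^T$. Verifying this identity carefully, and hence equivariance of $f$, is the technical heart of the argument.

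\textbf{Step 3: injectivity and Borelness.} Borelness of $f$ is routine once the $A_n$ are Borel and the Vaught transforms of Borel sets are Borel (by the topology-change machinery, or directly since $\Gspace_T$ is non-Archimedean and the relevant $\mcV^T_\oa$ form a basis). For injectivity, suppose $f(x) = f(y)$; reading off the $1^0$-coordinate (or the coordinates $1^n$) shows $x$ and $y$ lie in exactly the same sets of the separating family $\{A_n\}$, and since that family separates points, $x = y$. I expect the main obstacle to be \textbf{Step 2}: getting the indexing of coordinates by $\oa \in T$ to transform correctly under the action requires care about which representative/translate one evaluates $A_n$ on, and one must check that the formula is well-defined (independent of auxiliary choices) and genuinely lands in $\R^T$ rather than some quotient. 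The normality of the $\mcV^T_{1^n}$ and the basic Vaught-transform identities from \cite[Lemma 5.1.7]{Becker_Kechris_1996} are what make this go through, but assembling them into the precise equivariance identity is the step that needs real work rather than bookkeeping.
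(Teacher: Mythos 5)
Your proposal is correct and follows essentially the same route as the paper: the paper defines $f(x)(\oa)(n) = 1$ iff $x \in A_n^{*\oa}$ for a countable family $\mathcal{A}$ containing the basic open sets, proves equivariance by exactly the cocycle identity you isolate (using normality of the $\mcV^T_{1^k}$), and gets Borelness from Borelness of the Vaught transforms. The only point to be careful about is injectivity: the $1^0$-coordinate records membership in $A_n^{*\Gspace_T}$, not in $A_n$ itself, so one must use the coordinates $1^k$ for large $k$ together with continuity of the action and Hausdorffness, as you note parenthetically and as the paper does.
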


\begin{proof}
Fix a basis $\{U_n \mid n \in \omega\}$ for the topology on $X$. Let $f : \Xspace \rightarrow \mcC^T$ be the function defined by 
\[f(x) = \bfx \text{ where } \bfx(\oa)(n) = 1 \text{ iff }  (V_{a^{-1}}^T \cdot x) \cap U_n \neq \emptyset.\]
We claim that $f$ works. Injectivity follows from the fact that $\Xspace$ is Hausdorff and the action is continuous. To see that $f$ is Borel, fix an arbitrary finite partial function $c : T \rightarrow 2^{<\omega}$ and observe
\begin{equation}\label{eq:1}
f^{-1}[\mcU_c] = \bigcap \big[\{V_{a^{-1}} \cdot U \mid c(\oa)(n) = 1\} \cup \{\Xspace \setminus (V_{a^{-1}} \cdot U) \mid c(\oa)(n) = 0\}\big],
\end{equation}
which is Borel. To check equivariance, fix arbitrary $g \in \Gspace_T$, $x \in \Xspace$, $n \in \omega$, and $a \in T$ of length $k$, and observe that
\begin{align*}
    &[g \cdot f(x)](\oa)(n) = 1 \\
    \iff &f(x)((g \upharpoonright k)^{-1}a)(n) = 1 \\
    \iff &(V^T_{a^{-1}(g \upharpoonright k)} \cdot x) \cap U_n \neq \emptyset \\
    \iff &(V^T_{a^{-1}} \cdot gx) \cap U_n \neq \emptyset \\
    \iff &f(gx)(a)(n) = 1
\end{align*}
as desired.
\end{proof}

We will define relations $\sim^T_{\alpha, n}$ on the Polish $G_T$-space $\mcC^T \times T$. While these relations won't literally coincide on $\mcC^T \times T$ even when $X$ is taken to be $\mcC^T$, they are equally descriptive with respect to Borel reducibility, and the relations we define now are purely algebraic and easier to work with in some future arguments. The only difference in the recursive definition is in the base case, though of course this difference propagates and yields different relations at every level.

\begin{definition}
By transfinite recursion, we define relations $\sim^T_{\alpha, n}$ on $\mcC^T \times T$ for ordinals $\alpha$ and $n \in \omega$. Say that
\[(x, \oa) \sim^{T, X}_0 (y, \ob) \quad \text{iff} \quad \ell(\oa) = \ell(\ob)\; \text{and} \; x(a) = y(a). \]
Assuming $\sim^T_\alpha$ is defined, we say
\[(x, \oa) \sim^T_{\alpha, n} (y, \ob) \quad \text{iff} \quad \exists \oc \in T^n_\oa, \exists \od \in T^n_\ob, (x, \oa \smf \oc) \sim^T_{\alpha} (y, \ob \smf \od).\]
Finally, assuming $\sim^T_{\beta, m}$ is defined for every $\beta < \alpha$ and $m \in \omega$, then
\[(x, \oa) \sim^T_{\alpha} (y, \ob) \quad \text{iff} \quad \forall \beta < \alpha, \forall m \in \omega, \; (x, \oa) \sim^T_{\beta, m} (y, \ob). \]
\end{definition}

\begin{proposition}\label{pr:sim_reduce}
Let $X$ be a Polish $G_T$-space. Then for any $\alpha$ and $n \in \omega$,
\[ \sim^{T,X}_\alpha \; \le_B \; \sim^T_\alpha\]
and
\[ \sim^{T,X}_{\alpha, n} \; \le_B \; \sim^T_{\alpha, n}. \]
\end{proposition}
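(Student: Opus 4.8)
The plan is to use the universal map of Theorem \ref{th:universal_map} applied to the Polish $\Gspace_T$-space $(\R^T, \sigma)$, and then to check by transfinite induction that this single map simultaneously reduces $\sim^{T,\sigma}_{\alpha,n}$ to $\sim^T_{\alpha,n}$ for every $\alpha$ and every $n$. Concretely, apply Theorem \ref{th:universal_map} with $\Xspace = (\R^T,\sigma)$, taking $\mathcal{A}$ to be the closure, under finite unions, finite intersections, complements, and the operations $A \mapsto A^{*\mcV^T_{1^j}}$ (for $j \in \omega$), of a fixed countable basis for $\sigma$. This $\mathcal{A}$ is still countable and contains the basic $\sigma$-open sets, so we obtain a Borel $\Gspace_T$-equivariant injection $f : \R^T \to \R^T$, the domain carrying $\sigma$ and the codomain the standard topology, with $f(\bfx)(\oc)(n) = 1$ iff $\bfx \in A_n^{*\oc}$. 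Extend $f$ to $\hat f : \R^T \times T \to \R^T \times T$ by $\hat f(\bfx,\oa) = (f(\bfx),\oa)$; since $f$ is equivariant, $\hat f$ is a Borel $\Gspace_T$-equivariant injection, and the claim is that $\hat f$ witnesses both reductions (recalling $\sim^T_{\alpha,0} = \sim^T_\alpha$ from Lemma \ref{sim_facts}).

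The base case $\alpha = 0$ is the crux. One must show that $\overline{\mcV^T_\oa \cdot \bfx} = \overline{\mcV^T_\ob \cdot \bfy}$, with closures taken in $\sigma$, if and only if $\overline{\mcV^T_\oa \cdot f(\bfx)} = \overline{\mcV^T_\ob \cdot f(\bfy)}$, with closures taken in the standard topology on $\R^T$ (the condition $\ell(\oa) = \ell(\ob)$ transfers trivially, since $\hat f$ does not touch the $T$-coordinate). The mechanism is that the closure of a germ is read off coordinatewise: in $(\R^T,\sigma)$ the set $\overline{\mcV^T_\oa \cdot \bfx}$ is determined by which basic $\sigma$-open $\mcU$ satisfy $\bfx \in \mcU^{\Delta \mcV^T_\oa}$, and in the standard topology $\overline{\mcV^T_\oa \cdot f(\bfx)}$ is determined by which basic open boxes $\mcU_c \subseteq \R^T$ satisfy $f(\bfx) \in \mcU_c^{\Delta \mcV^T_\oa}$. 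Using the formula \eqref{eq:1} from the proof of Theorem \ref{th:universal_map}, $f^{-1}[\mcU_c]$ is a finite Boolean combination of the sets $A_n^{*\oc}$; applying the standard identities for Vaught transforms one rewrites ``$f(\bfx) \in \mcU_c^{\Delta \mcV^T_\oa}$'' as a Boolean condition on $\bfx$ built from $\Delta$- and $*$-transforms of members of $\mathcal{A}$, and since $\mathcal{A}$ was closed under Boolean operations and the transforms $A \mapsto A^{*\mcV^T_{1^j}}$, these are precisely the conditions that determine $\overline{\mcV^T_\oa \cdot \bfx}$ in $\sigma$. Matching these up in both directions gives the base case; this is where the careful choice of $\mathcal{A}$ and a short bookkeeping of Vaught-transform identities is needed.

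The successor and limit steps are then essentially formal. For $\alpha \ge 1$, the statement $(\bfx,\oa) \sim^{T,\sigma}_{\alpha,n} (\bfy,\ob)$ unfolds to a quantification over $\oc \in T^n_\oa$, $\od \in T^n_\ob$ and then over $\beta < \alpha$ and $m \in \omega$ about $\sim^{T,\sigma}_{\beta,m}$ applied to $(\bfx,\oa\smf\oc)$ and $(\bfy,\ob\smf\od)$. Since $\hat f(\bfx,\oa\smf\oc) = (f(\bfx),\oa\smf\oc)$ and likewise for $\bfy$, the same witnessing tuples $\oc,\od$ are available on the image side, and the induction hypothesis gives $(\bfx,\oa\smf\oc)\sim^{T,\sigma}_{\beta,m}(\bfy,\ob\smf\od)$ iff $(f(\bfx),\oa\smf\oc)\sim^T_{\beta,m}(f(\bfy),\ob\smf\od)$; hence the two unfoldings agree. (Alternatively, one can organize the induction through Lemma \ref{lm:hjorth_isom}, identifying $\sim^{T,\sigma}_\alpha$ and $\sim^T_\alpha$ with Hjorth's relations $\le^{\alpha+1}$ computed in $\sigma$ and in the standard topology, and arguing that the Borel equivariant injection $f$ preserves these relations in both directions; the content at $\alpha = 0$ is the same either way.)

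The main obstacle is exactly the base case: reconciling the comeager transforms $A_n^{*\oc}$ appearing in the definition of $f$ with the nonmeager transforms and germ-closures appearing in $\sim_0$, and ensuring that the standard-topology closure of $\mcV^T_\oa \cdot f(\bfx)$ — which a priori also constrains the coordinates of $f(\bfx)$ at indices of length greater than $\ell(\oa)$ — is still governed by data about $\bfx$ that $\sim^{T,\sigma}_0$ can detect. Closing $\mathcal{A}$ under the relevant Vaught transforms, so that these longer-index constraints are themselves recorded among the coordinates of $f(\bfx)$, is what makes the argument go through.
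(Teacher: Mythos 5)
Your overall strategy --- push everything through the equivariant injection $f$ of Theorem \ref{th:universal_map} applied to $(\R^T,\sigma)$ and induct on $\alpha$ --- is exactly the paper's (its entire proof is the single assertion that this $f$ ``routinely'' witnesses all the reductions), and you are right that the base case $\alpha=0$ is where all the content lies. But your treatment of the base case has a genuine gap, and in fact the forward implication there fails for the map $f$ as constructed. The problem is that $\overline{\mcV^T_\oa\cdot f(\bfx)}$ (standard topology) determines strictly more about $\bfx$ than $\overline{\mcV^T_\oa\cdot\bfx}$ ($\sigma$-closure) does. For instance the root coordinate $f(\bfx)(1^0)(n)=[\bfx\in A_n^{*G_T}]$ is a $G_T$-invariant continuous function of $f(\bfx)$, so $\overline{G_T\cdot f(\bfx)}=\overline{G_T\cdot f(\bfy)}$ forces $\bfx\in A_n^{*G_T}\Leftrightarrow\bfy\in A_n^{*G_T}$ for every $n$; but membership in $U^{*G_T}$ for a basic open $U$ is not a function of the orbit closure of $\bfx$. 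Already with $\sigma$ the standard topology and $G_T=\mathbb{Z}^\omega$, take $\bfx,\bfy$ supported on $T^1\cong\mathbb{Z}$ whose $T^1$-parts are two sequences in $C^{\mathbb{Z}}$, $C=\{0\}\cup\{2^{-k}:k\ge 0\}$, each with shift-orbit dense in $C^{\mathbb{Z}}$, one avoiding the symbol $0$ and one containing it: then $\overline{G_T\cdot\bfx}=\overline{G_T\cdot\bfy}$, so $(\bfx,1^0)\sim^{T,\sigma}_0(\bfy,1^0)$, yet $f(\bfx)$ and $f(\bfy)$ differ on the invariant root coordinate indexed by $U=\{\bfz:\bfz((0))\in(0,2)\}$, whence $(f(\bfx),1^0)\not\sim^T_0(f(\bfy),1^0)$.

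Your proposed repair --- closing $\mathcal{A}$ under complements and the transforms $A\mapsto A^{*\mcV^T_{1^j}}$ --- goes in the wrong direction: it makes $f$ record even more data, so the pulled-back relation becomes finer still, whereas the difficulty is that it is already too fine relative to $\sim^{T,\sigma}_0$. A correct writeup must either dispose of $\alpha=0$ by a different map (both relations are smooth, so $\sim^{T,\sigma}_0\le_B\;\sim^T_0$ holds trivially) and then show that for $\alpha\ge1$ the quantification over all finite extensions $\oc,\od$ in the definition of $\sim_{\alpha,n}$ washes out the discrepancy, or else argue via Lemma \ref{lm:hjorth_isom} and Proposition \ref{pr:hjorth_inv} that $f$ transports the relevant $\BPi^0_{\alpha+1}$ Vaught-transform data in both directions. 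Neither step is routine, and as written your induction cannot start: the successor step at $\alpha=1$ invokes the base-case equivalence, which is false for this $f$.
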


\begin{proof}
Let $f : X \rightarrow \mcC^T$ be the equivariant injection from Theorem \ref{th:universal_map}. We see that this serves as the reduction for every $\alpha$. It suffices to check that by the definition of $f$ we have $f(x)(a) = f(y)(b)$ iff $\overline{V_{a^{-1}}^T x} = \overline{V_{b^{-1}}^T y}$. The rest follows by an easy induction on $\alpha$ using the equivariance of $f$.
\end{proof}

Given a sequence $(R_n)_n$ of relations living on Polish spaces $X_n$ let $\bigsqcup_n R_n$, the \emph{disjoint union} of the sequence $(R_n)_n$, be the relation living on the disjoint union $\bigsqcup_n X_n$ that relates $x$ and $y$ if for some $n$, $x, y \in X_n$ and $x R_n y$. 

We make a few easy observations about potential Borel complexity before continuing, which we will use repeatedly in Section \ref{sec:applications}. Assuming that each $R_n$ is potentially $\BPi^0_\alpha$, then so are the product $\prod_n R_n$ and the disjoint union $\bigsqcup_n R_n$. The converse holds as well, as trivially $R_m \le_B \bigsqcup_n E_n$, and assuming the $R_n$ are reflexive, each $R_m \le_B \prod_n R_n$. 

On the other hand, assuming the relations $R_n$ all live on the same Polish space $X = X_n$, we have $\bigcup_n R_n$ is potentially $\BSigma^0_\alpha$ assuming each $R_n$ is $\BSigma^0_\alpha$. We also have that $\bigcap_n R_n$ is potentially $\BPi^0_\alpha$ assuming each $R_n$ is $\BPi^0_\alpha$.

For the next proposition we define
\[ \sim^T_{\alpha, <\omega} \; := \bigsqcup_n \sim^T_{\alpha, n} \]
and
\[ \sim^T_{< \alpha} \; := \bigsqcup_{\beta < \alpha} \sim^T_{\beta}. \]

\begin{proposition}\label{prop:upper_bounds}
For any Polish $G$-space with $G = G_T$, the following all hold for any $n \in \omega$ and limit $\lambda$:
\begin{enumerate}
    \item if $E^G_X$ is potentially $\BPi^0_{n+2}$ then $E^G_X \le_B \; \sim^T_n$, and if $E^G_X$ is potentially $\BPi^0_{\lambda+n+1}$ then $E^G_X \le_B \; \sim^T_{\lambda+n}$;
    \item if $E^G_X$ is potentially $\BSigma^0_{n+3}$ then $E^G_X \le_B \;\sim^T_{n, <\omega}$ and if $E^G_X$ is potentially $\BSigma^0_{\lambda+n+2}$ then $E^G_X \le_B \; \sim^T_{\lambda + n, <\omega}$; and
    \item if $E^G_X$ is potentially $\bigoplus_{\alpha < \lambda} \BPi^0_\alpha$ then $E^G_X \le_B \;\sim^T_{<\lambda}$.
\end{enumerate}
\end{proposition}

\begin{proof}
Of course, we may assume in any case that $E^G_X$ is exactly the desired complexity, by the change of topology argument in Theorem \ref{th:top_change}.

First, we prove part (1). Suppose $E^{G_T}_X$ is $\BPi^0_{n+2}$. Then the relations $\sim^{T, X}_n$ and $E^{G}_X$ coincide by Proposition \ref{pr:isom_sim}. Thus $E^G_X$ is Borel-reducible to $\sim^T_n$ by Proposition \ref{pr:sim_reduce}. The infinite case is essentially the same.

We now proceed to part (2). Suppose $E^{G}_X$ is $\BSigma^0_{n+3}$. Write $E^G_X = \bigcup_\ell A_\ell$ where each $A_\ell$ is $\BPi^0_{n+2}$. 
For every $\ell$ and $k$, let
\[A_{\ell, k} := \{x \in X \mid \exists a, b \in T^k, \; x \in A_{\ell}^{*a, b}\} \]
which is Borel, and also invariant by the continuity of the action.
Observe that by continuity of the group action, if $x \in A_{\ell, k}$ then $G \cdot x$ is densely $\BPi^0_{n+2}$ in $V^T_{1^k} \cdot x$.
For any $x$, we have $(x, x) \in (E^G_X)^{\Delta G \times G}$ where we consider the natural product action of $G \times G$ on $X \times X$, and thus $(x, x) \in A_\ell^{*a, b}$ for some $k$ and $a, b\in T^k$.
We just observed that the $A_{\ell, k}$ cover $B$, and moreover by Proposition \ref{pr:isom_sim2}, $\sim^{T, X}_{n, k}$ and $E^G_X$ coincide on $A_{\ell, k}$. Thus $E^G_X \le_B \;\mathrel{\sim^{T}_{n, <\omega}}$ by Proposition \ref{pr:sim_reduce}. The infinite case is essentially the same.

For part (3), fix a partition $X = \bigsqcup_{\alpha < \lambda} X_\alpha$ of $G$-invariant Borel sets such that for every $x \in X_\alpha$, $G \cdot x$ is $\BPi^0_\alpha$. On $X_\alpha$, the relations $\sim^{T, X}_\alpha$ and $E^G_X$ coincide, thus $E^G_X \le_B \;\sim^T_{<\lambda}$ by Proposition \ref{pr:sim_reduce}.
\end{proof}

We establish some upper bouds for potential Borel complexity. By Proposition \ref{pr:hkl} and the following proposition, the results of the prevous proposition are tight.
\begin{proposition}\label{prop:friedman_stanley}For every $\alpha < \omega_1$, $\sim^T_\alpha$ is Borel reducible to $=^{+(\alpha)}$.
\end{proposition}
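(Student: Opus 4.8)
The plan is to prove $\sim^T_\alpha \le_B \;=^{+(\alpha)}$ by transfinite induction on $\alpha$, simultaneously with the relativized version: for every $n \in \omega$ and every tuple $\oa$, the relation $(\bfx, \oa) \sim^T_{\alpha, n} (\bfy, \ob)$ (together with the constraint $\ell(\oa) = \ell(\ob)$) is Borel reducible to $=^{+(\alpha)}$ as well. Since $T$ is countable and the relativized relations for fixed $\alpha$ are uniformly Borel in $n$ and in the relevant tuples, a countable product of copies of $=^{+(\alpha)}$ is still Borel-equivalent to $=^{+(\alpha)}$ (for $\alpha \ge 1$; the base case is handled separately), so this bookkeeping costs nothing. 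I would set up the induction to produce, for each $\alpha$, a Borel map $\bfx \mapsto c_\alpha(\bfx)$ into a Polish space on which $=^{+(\alpha)}$ lives, with $c_\alpha(\bfx) =^{+(\alpha)} c_\alpha(\bfy)$ iff $(\bfx, 1^0) \sim^T_\alpha (\bfy, 1^0)$, and more generally a map $c_{\alpha,n}$ indexed so as to capture all the $(\bfx,\oa) \sim^T_{\alpha,n}(\bfy,\ob)$ comparisons.

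\emph{Base case $\alpha = 0$.} Here $(\bfx, \oa) \sim^T_0 (\bfy, \ob)$ iff $\overline{\mcV^T_\oa \cdot \bfx} = \overline{\mcV^T_\ob \cdot \bfy}$ and $\ell(\oa) = \ell(\ob)$. The closure $\overline{\mcV^T_\oa \cdot \bfx}$ is a closed subset of $\R^T$, determined by which basic open sets it meets; equivalently by the values $\mathbf{1}[\bfx \in A^{\Delta \oa}]$ as $A$ ranges over a countable basis (this is exactly the kind of invariant handled in Proposition~\ref{pr:hjorth_inv} / the construction of Theorem~\ref{th:universal_map}). Since these are countably many Borel conditions on $\bfx$, the map sending $\bfx$ to this bit-vector (indexed also by $\oa$, and recording $\ell(\oa)$) is a Borel reduction of $\sim^T_0$ to equality on $2^\omega$, i.e. to $=^{+(0)} = \;=_\R$.

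\emph{Successor and limit steps.} For $\alpha \ge 1$, by definition $(\bfx, \oa) \sim^T_{\alpha,n} (\bfy, \ob)$ holds iff there exist $\oc \in T^n_\oa$, $\od \in T^n_\ob$ with $(\bfx, \oa\smf\oc) \sim^T_{\beta,m} (\bfy, \ob\smf\od)$ for all $\beta < \alpha$ and all $m$. Fix $\bfx$. For each $\oc \in T^n_\oa$ form the object $d(\bfx,\oa,\oc) := \big( c_{\beta,m}(\bfx, \oa\smf\oc) \big)_{\beta < \alpha,\, m \in \omega}$, using the inductive reductions; this is a Borel function of $\bfx$ landing in $\prod_{\beta<\alpha}(\text{domain of }=^{+(\beta)})^\omega$, on which the product of the $=^{+(\beta)}$'s is, for the purposes of the Friedman--Stanley hierarchy, exactly $=^{+(\alpha)}$ at limit $\alpha$ and (absorbing the one extra jump) at successor $\alpha$ too — more carefully, at a successor $\alpha = \gamma+1$ one only needs $\beta = \gamma$ and the relevant product is Borel-equivalent to $=^{+(\gamma)}$, and then the existential quantifier over $\oc$ is precisely a Friedman--Stanley jump, yielding $=^{+(\gamma+1)} = \;=^{+(\alpha)}$; at limit $\alpha$ the product over $\beta<\alpha$ already \emph{is} $=^{+(\alpha)}$, and the existential-match structure is absorbed into that product since for limit $\alpha$ each $=^{+(\beta)}$ with $\beta$ large already dominates the match. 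Concretely, I would define $c_{\alpha,n}(\bfx,\oa) := \{\, d(\bfx,\oa,\oc) : \oc \in T^n_\oa \,\}$, a countable set of elements of the domain of $=^{+(\alpha-1)}$ (successor case) or of the domain of $=^{+(\alpha)}$ (limit case), and check that $(\bfx,\oa)\sim^T_{\alpha,n}(\bfy,\ob)$ iff these sets agree modulo the appropriate equivalence — which is the definition of one further Friedman--Stanley jump in the successor case, and is handled by the product structure in the limit case. One must double-check Borelness: each coordinate map is Borel by the induction hypothesis, and assembling a countable indexed family of Borel maps into a map to a product/hyperspace of countable sets is Borel.

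\emph{Main obstacle.} The delicate point is the exact alignment of the recursion on $\sim^T_{\alpha,n}$ (which at stage $\alpha$ quantifies over \emph{all} $\beta < \alpha$ and \emph{all} $m$, including at successor $\alpha$) with the strict recursion defining $=^{+(\alpha)}$ (which at a successor takes exactly \emph{one} Friedman--Stanley jump over the previous level, and at a limit takes the product). One has to verify that throwing in the extra levels $\beta < \gamma$ and the extra parameter $m$ at a successor $\alpha = \gamma+1$ does not push the complexity past $=^{+(\gamma+1)}$ — this is true because $=^{+(\beta)} \le_B \;=^{+(\gamma)}$ for $\beta \le \gamma$, and a countable product of relations each $\le_B \;=^{+(\gamma)}$ is again $\le_B \;=^{+(\gamma)}$ (for $\gamma \ge 1$; and the needed facts are standard properties of the iterated Friedman--Stanley jumps), so the only genuine increase in complexity is the single jump contributed by the existential quantifier over $\oc$. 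Making this absorption precise, uniformly in $n$ and the tuples, and cleanly separating the successor and limit bookkeeping, is where the real work of the proof lies; everything else is routine verification that the maps constructed are Borel.
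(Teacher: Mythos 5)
Your overall strategy is the same as the paper's: by transfinite recursion assign to each $(\bfx,\oa)$ a hereditarily countable invariant, with the base case recording which basic open sets $\overline{\mcV^T_\oa\cdot\bfx}$ meets, and the higher stages collecting the lower-level invariants of the extensions $\oa\smf\oc$, so that each successor costs exactly one Friedman--Stanley jump and each limit is a product. The organizational difference is that the paper forms a single \emph{flat} (tagged) set $\{A^\beta_{(\bfx,\oa\smf\oc)} \mid \oc\in T_\oa,\ \beta<\alpha\}$, whereas you form the set, over $\oc\in T^n_\oa$, of the \emph{tuples} $\bigl(c_{\beta,m}(\bfx,\oa\smf\oc)\bigr)_{\beta<\alpha,\,m\in\omega}$.

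There is, however, a genuine error in the bookkeeping, and it sits exactly at the point you flag as the crux. Your strengthened induction hypothesis --- that $\sim^T_{\alpha,n}\;\le_B\;=^{+(\alpha)}$ for \emph{every} $n$ --- is false for $n\ge 1$. Already at the bottom level, $\sim^T_{0,m}$ for $m\ge 1$ asks for an existential match over $T^m_\oa$ of $\sim^T_0$-invariants; its natural complete invariant is a countable \emph{set} of reals, so it lives at $=^{+}$, not at $=_\R$, and it is in general non-smooth (it is an orbit equivalence relation of the countable group $T^m$). In general $\sim^T_{\alpha,n}$ for $n\ge 1$ sits at $=^{+(\alpha+1)}$; this is consistent with Proposition \ref{pr:fine_universal}, where $\sim^T_{n,<\omega}$ is universal for a strictly larger complexity class than $\sim^T_n$, which would be impossible if every $\sim^T_{n,k}$ reduced to $=^{+n}$. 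If you run your induction with the uniform hypothesis, the successor step is unjustified; if you run it with the correct levels, your set-of-tuples at stage $\alpha=\gamma+1$, $n\ge 1$ lands at $=^{+(\gamma+2)}$, one level too high. The repair is a two-tier hypothesis: $\sim^T_{\alpha}\le_B\;=^{+\alpha}$ but $\sim^T_{\alpha,n}\le_B\;=^{+(\alpha+1)}$ for $n\ge 1$. This still yields the proposition, because $\sim^T_\alpha=\sim^T_{\alpha,0}$ and $T^0_\oa$ is a singleton, so at $n=0$ your outer set is a singleton and contributes no jump: the one jump per level is paid entirely by the inner existentials over $T^m$, not by the outer one over $T^n$ as your narration suggests. (The paper's flat tagged union avoids the issue by never nesting a set inside a set at the same stage.) Finally, both your argument and the paper's tacitly use that set equality, which expresses a $\forall\oc\,\exists\od$ matching, agrees with the $\exists\oc\,\exists\od$ form in the definition of $\sim^T_{\alpha,n}$; this requires the translation invariance of Lemma \ref{sim_facts}(1) and should be said explicitly.
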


\begin{proof}
Let $\mcC$ represent Cantor space. We will prove this by describing how, for each ordinal $\alpha$, to associate each $(x, \oa) \in X \times T$ with a set $A^\alpha_{x, \oa} \in \mathcal{P}^\alpha(\mcC)$ such that $(x, \oa) \sim^T_\alpha (y, \ob)$ iff $A^\alpha_{x, \oa} = A^\alpha_{y, \ob}$.

For any $(x, \oa) \in X \times T$, the $\sim^T_0$-class of $(x, \oa)$ is determined entirely by the real number $x(\oa)$.
This defines a Borel reduction of $\sim^T_0$ to equality on the reals via the map $\pi : X \times T \rightarrow \mcC$ defined by
\[ (x, \oa) \mapsto c_{x, \oa} \quad \text{where} \quad c_{x, \oa} = x(\oa).\]

Now suppose for some $\alpha$ that the reduction $\pi_\beta$ has been defined for $\beta$ for every $\beta < \alpha$. For every $(x, \oa) \in X \times T$ and $\beta < \alpha$, let $A^\beta_{x, \oa}$ be the element of $\mathcal{P}^\beta(\mcC)$ that serves as an invariant for its $\sim^T_\beta$-class. Then for any $(x, \oa)$, the invariant for its $\sim^T_{\alpha+1}$ class is simply the set
\[ A^{\alpha}_{x, \oa} :=  \{A^\alpha_{(x, \oa \smf \ob)} \mid \ob \in T_\oa \}. \]
We can then define $\pi_{\alpha+1}$ by
\[ (x, a) \mapsto \langle \pi_\alpha(x, a \smf b_{a, n}) \mid n < \omega \rangle \]
where $b_{a, n}, n \in \omega$ is a fixed enumeration of $T_a$ for each $a \in T$.
The fact that this is a reduction is clear from the definition of $\sim^T_{\alpha+1}$ and Lemma \ref{sim_facts}.(1).

For limit $\alpha$, merely define $\pi_\alpha(x, \oa) = \langle \pi_{\beta}(x, \oa) \mid \beta < \alpha \rangle$.
\end{proof}

\subsection{A family of actions of non-Archimedean TSI groups}

For this section, fix a group tree $T$. We show that the relations $\sim^T_\alpha$ and $\sim^T_{\alpha, <\omega}$ are classifiable by $(G_T)^\omega$. It's not clear if in general we can get classifiability by $G_T$. Arguments in \cite{Hjorth_2010} suggest that it likely not possible. However of course we do get classifiability by $G_T$ assuming that $(G_T)^\omega \isom G_T$, such as $F_2^\omega$ or $\mathbb{Z}^\omega$. This would also be useful if one would like to study actions of products of solvable groups, or amenable groups, and so on.

Recall that $T^m$ is the countable discrete group of sequences in $T$ of length $m$. For each $k < m$, we will define $T^m_k$ to be the normal subgroup of $T^m$ defined by:
\[T^m_k := \{\oa \in T^m \mid \oa \upharpoonright k = 1^k\}.\]

For a countable group $\Gamma$, the $\Gamma$-shift of a Polish $\Gspace$-space $(\Xspace, a)$, is the Polish $(\Gamma \times \Gspace)$-space $\Xspace^\Gamma$ with the action defined by:
\[((a, g)\cdot x)(b) = g \cdot x(a^{-1}b). \]

If for each $n$, there is a Polish group $\Gspace_n$ and a Polish $\Gspace_n$-space $(\Xspace_n, a_n)$, write $\prod_n (\Xspace_n, a_n)$ to refer to be the Polish $\prod_n \Gspace_n$-space living on $\prod_n \Xspace_n$ with the induced product action $(g_n)_n \cdot (x_n)_n = (g_n \cdot_{a_n} x_n)_n$. Write $\bigsqcup_n (\Xspace_n, a_n)$ to be the Polish $\prod_n \Gspace_n$-space living on $\bigsqcup_n \Xspace_n$ with the action induced by restrictions, i.e. $(g_n)_n \cdot x = g_k \cdot_{a_k} x$ for the unique $k$ such that $x \in X_k$. Of course we may also take products and unions over arbitrary countable ordinals, i.e. if $X_\beta$ is a Polish $G_\beta$-space for every $\beta < \alpha$, then $\prod_{\beta < \alpha} X_\beta$ and $\bigsqcup_{\beta < \alpha} X_\beta$ are Polish $\prod_{\beta < \alpha} G_\beta$-spaces.

For each ordinal $\alpha \in \omega_1$ and $n, m \in \omega$, we will define a Polish group $G^m_{\alpha, n}$ and a Polish $G^m_{\alpha, n}$-space $(X^m_{\alpha, n}, a^m_{\alpha, n})$.

\begin{definition} For $n, m \in \omega$ and $\alpha \ge 1$, we define the following:
\begin{enumerate}
    \item Let $(X^m_{0, n}, a^m_{0, n})$ be the shift action of $T^{m+n}_m$ on $\mcC^{T^{m+n}_m}$, where $\mcC$ is Cantor space;
    \item Assuming $(X^{m'}_{\beta, n'}, a^{m'}_{\beta, n'})$ is defined for every $m', n' \in \omega$ and $\beta < \alpha$, we define $(X^m_{\alpha, n}, a^m_{\alpha, n})$ to be the $T^{m+n}_m$-shift of
    \[\prod_{i \ge n} \prod_{\beta < \alpha} (\Xspace^{m+n}_{\beta, i}, a^{m+n}_{\beta, i}).\]
    
\end{enumerate}
\end{definition}

By an easy induction we see that each $(X^m_{\alpha, n}, a^m_{\alpha, n})$ is a Polish $G^m_{\alpha, n}$-space where $G^m_{0, n}$ is $T^{m+n}_m$ and for $\alpha > 0$, $G^m_{\alpha, n}$ is
\[T^{m+n}_m \times \prod_{i \ge n} \prod_{\beta < \alpha} G^{m+n}_{\beta, i}\Big.\]

For each $n, m \in \omega$ and ordinal $\alpha$, let $E^m_{\alpha, n}$ be the orbit equivalence relation of the $G^m_{\alpha, n}$-space $(\Xspace^m_{\alpha, n}, a^m_{\alpha, n})$. Let $E_{\alpha, <\omega}$ be the orbit equivalence relation of $\bigsqcup_n (\Xspace^0_{\alpha, n}, a^0_{\alpha, n})$. For limit $\lambda$, let $E_{<\lambda}$ be the orbit equivalence relation of $\bigsqcup_{\beta < \alpha} (\Xspace^0_{\beta, 0}, a^0_{\beta, 0})$.

\begin{lemma}\label{lm:sim_reduce}
For every Polish $G_T$-space $X$, each ordinal $\alpha$ and $n, m \in \omega$, $\sim^T_{\alpha, n} \upharpoonright (\mcC^T \times T^m)$ is Borel-reducible to $E^m_{\alpha, n}$.
\end{lemma}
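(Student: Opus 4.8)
The plan is to construct, by transfinite recursion on $\alpha$ and uniformly in $m,n\in\omega$, a Borel reduction
\[\Psi^m_{\alpha,n}\colon\bigl(\R^T\times T^m,\ \sim^T_{\alpha,n}\!\upharpoonright(\R^T\times T^m)\bigr)\longrightarrow(X^m_{\alpha,n},E^m_{\alpha,n}),\]
where $E^m_{\alpha,n}$ is the orbit equivalence relation of $(X^m_{\alpha,n},a^m_{\alpha,n})$. Since $(X^m_{\alpha,n},a^m_{\alpha,n})$ is assembled out of the spaces $(X^{m+n}_{\beta,i},a^{m+n}_{\beta,i})$ for $\beta<\alpha$ and $i\ge n$, the recursion hypothesis supplies exactly the maps $\Psi^{m+n}_{\beta,i}$ needed to build $\Psi^m_{\alpha,n}$, and the definition of $\Psi^m_{\alpha,n}$ will mirror the definition of $(X^m_{\alpha,n},a^m_{\alpha,n})$ in the same way that the definition of $\sim^T_{\alpha,n}$ does. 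Since $T$ is pruned, $T^{m+n}/N^{m+n}_m\cong T^m$ via $dN^{m+n}_m\mapsto d\!\upharpoonright\! m$; I will freely identify the shift index $T^{m+n}/N^{m+n}_m$ with $T^m$, and for $\oa\in T^m$ write $C_\oa:=\{d\in T^{m+n}:d\!\upharpoonright\! m=\oa\}=\{\oa\smf\oc:\oc\in T^n_\oa\}$ for the corresponding coset of $N^{m+n}_m$.

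Before starting the recursion I would record a cleaner form of the relation. Unwinding the definition of $\sim^T_{\alpha,n}$ for $\alpha\ge1$ and using Lemma~\ref{sim_facts}(2) and (4),
\[(\bfx,\oa)\sim^T_{\alpha,n}(\bfy,\ob)\quad\Longleftrightarrow\quad\exists\,d\in C_\oa\ \exists\,d'\in C_\ob\ :\ (\bfx,d)\sim^T_\alpha(\bfy,d'),\]
and for tuples $d,d'$ of length $m+n$ one has $(\bfx,d)\sim^T_\alpha(\bfy,d')$ iff $(\bfx,d)\sim^T_{\beta,i}(\bfy,d')$ for all $\beta<\alpha$ and all $i\ge n$. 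Consequently, by the recursion hypothesis, the map $(\bfx,d)\mapsto\bigl(\Psi^{m+n}_{\beta,i}(\bfx,d)\bigr)_{\beta<\alpha,\,i\ge n}$ is a Borel reduction of $\sim^T_\alpha\!\upharpoonright(\R^T\times T^{m+n})$ to the orbit equivalence relation of $\prod_{i\ge n}\prod_{\beta<\alpha}(X^{m+n}_{\beta,i},a^{m+n}_{\beta,i})$. The remaining task is to upgrade such a reduction (of $\sim^T_\alpha$ on length-$(m+n)$ tuples) to a reduction of $\sim^T_{\alpha,n}$ on length-$m$ tuples, i.e.\ to absorb the two existential quantifiers ``$\exists d\in C_\oa$'' and ``$\exists d'\in C_\ob$'' over the choice of extension; this is exactly what passing to the $(T^{m+n}/N^{m+n}_m)$-shift does, and it is exactly how $X^m_{\alpha,n}$ is obtained from $\prod_{i\ge n}\prod_{\beta<\alpha}(X^{m+n}_{\beta,i},a^{m+n}_{\beta,i})$. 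Thus $\Psi^m_{\alpha,n}(\bfx,\oa)$ will be the element of $X^m_{\alpha,n}$ whose value at the coset lying over $b\in T^m$ packages the data $\bigl(\Psi^{m+n}_{\beta,i}(\bfx,d)\bigr)_{\beta<\alpha,\,i\ge n}$ for the tuples $d\in C_b$, together with a marker distinguishing the coset over $\oa$; the shift part of $G^m_{\alpha,n}$ carries the marked coset of $\Psi^m_{\alpha,n}(\bfx,\oa)$ onto the marked coset of $\Psi^m_{\alpha,n}(\bfy,\ob)$, forcing the shift witness to be $\ob\oa^{-1}$, while the coordinatewise action of $\prod_{i\ge n}\prod_{\beta<\alpha}G^{m+n}_{\beta,i}$ (together with the shift structure internal to the $X^{m+n}_{\beta,i}$) realizes the choice of $d\in C_\oa$, $d'\in C_\ob$ and the equivalences $(\bfx,d)\sim^T_{\beta,i}(\bfy,d')$.

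The base case $\alpha=0$ is handled directly, since $\sim^T_0$ only sees the orbit closures. Fix a Borel isomorphism between $\R^{T^{m+n}}$ and the standard Borel space of pairs (closed subset of $\R^T$, bit); using it, let $\Psi^m_{0,n}(\bfx,\oa)$ be the function on $T^{m+n}/N^{m+n}_m\cong T^m$ whose value at $b$ codes the closed set $\overline{\mcV^T_b\cdot\bfx}$ with the bit set to $1$ iff $b=\oa$ (the map $\bfx\mapsto\overline{\mcV^T_b\cdot\bfx}$ into the Effros space is Borel, so this is Borel). Now $(\bfx,\oa)\sim^T_0(\bfy,\ob)$ iff $\overline{\mcV^T_\oa\cdot\bfx}=\overline{\mcV^T_\ob\cdot\bfy}$, which by Lemma~\ref{sim_facts}(1) (and the trivial converse, taking $\oc=1^m$) is equivalent to $\overline{\mcV^T_{\oa\oc}\cdot\bfx}=\overline{\mcV^T_{\ob\oc}\cdot\bfy}$ for every $\oc\in T^m$. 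Reindexing by $b=\ob\oc$, this says exactly that the shift by $c:=\ob\oa^{-1}$ takes the orbit-closure coordinate of $\Psi^m_{0,n}(\bfx,\oa)$ to that of $\Psi^m_{0,n}(\bfy,\ob)$, while the bit forces any shift witness to equal $\ob\oa^{-1}$; hence the two images are $E^m_{0,n}$-equivalent iff $(\bfx,\oa)\sim^T_0(\bfy,\ob)$.

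I expect the main obstacle to be the bookkeeping in the inductive step: organizing, at each coset $C_b$, the family of invariants indexed by the $d\in C_b$ so that the combined shift-and-coordinatewise action of $G^m_{\alpha,n}$ realizes exactly the statement ``$\exists d\in C_\oa\ \exists d'\in C_\ob:(\bfx,d)\sim^T_\alpha(\bfy,d')$'' and nothing stronger or weaker --- in particular, ensuring that a single $d$ serves all pairs $(\beta,i)$ simultaneously and that the witnessing group element is uniform across positions, which is where the marker and the coherence built into the maps $\Psi^{m+n}_{\beta,i}$ are used, and where the precise choice of shift index $T^{m+n}/N^{m+n}_m$ (rather than $T^{m+n}$ or $T^m$ literally) matters. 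Checking that $\Psi^m_{\alpha,n}$ is Borel is routine, as all index sets are countable and discrete and each coordinate is a Borel combination of the $\Psi^{m+n}_{\beta,i}$ and of the Borel maps $\bfx\mapsto$ (code of $\overline{\mcV^T_b\cdot\bfx}$); and verifying that the recursion closes requires only noting that the construction at stage $\alpha$ used nothing about $m,n$ beyond what is available at every stage.
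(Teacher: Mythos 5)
Your overall strategy --- a transfinite recursion that builds the reduction maps in parallel with the recursive definitions of $\sim^T_{\alpha,n}$ and of $(X^m_{\alpha,n},a^m_{\alpha,n})$ --- is the same as the paper's, and your base case is a correct (if differently coded) variant of it. But the inductive step as you describe it has a genuine gap. The space $X^m_{\alpha,n}$ is the shift of the \emph{single} product space $\prod_{i\ge n}\prod_{\beta<\alpha}X^{m+n}_{\beta,i}$, so each shift position of a point of $X^m_{\alpha,n}$ holds exactly one tuple, with one entry from each $X^{m+n}_{\beta,i}$. The object you describe --- whose value over the coset $C_b$ ``packages the data $(\Psi^{m+n}_{\beta,i}(\bfx,d))_{\beta,i}$ for all $d\in C_b$, together with a marker'' --- is a $C_b$-indexed family of such tuples plus a bit, and hence is not an element of $X^m_{\alpha,n}$ at all; you would at best be reducing to a different equivalence relation. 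Worse, matching up entire coset-indexed families by a group element encodes something much closer to a simultaneous matching of \emph{all} extensions than to the two existential quantifiers $\exists\,\oc\in T^n_\oa\ \exists\,\od\in T^n_\ob$ that define $\sim^T_{\alpha,n}$. The paper's map avoids both problems: $f^\alpha_{n,m}(\bfx,\oa)$ is the function whose value at the position indexed by a length-$n$ extension $\oc$ of $\oa$ is the single tuple $\big(f^\beta_{i,m+n}(\bfx,\oa\smf\oc)\big)_{\beta<\alpha,\,i\ge n}$; there is no marker and no per-coset family, and the dependence on $\oa$ is absorbed into parametrizing positions by extensions of $\oa$, so that the shift part of the group realizes exactly the choice of the witnessing pair of extensions.

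The second, related, problem is that you explicitly defer what you call ``the main obstacle'': showing that one witnessing pair $(\oc,\od)$ can be upgraded to a \emph{single} shift element together with a \emph{single} coordinatewise group element matching \emph{every} position at once (recall that in the $\Gamma$-shift of a $\Gspace$-space the same $g\in\Gspace$ acts at every position $b$). This is precisely where Lemma~\ref{sim_facts}(1) enters --- it propagates one witnessing pair $(\oc,\od)$ to the translated pairs $(\oc\cdot\ooe,\od\cdot\ooe)$ for every $\ooe$ --- together with equivariance of the recursively constructed maps, so that the witnessing group elements transport uniformly across positions. Identifying this as the crux is good, but it is the actual content of the lemma rather than bookkeeping, and your data structure (marker pinning the stem, families over cosets, shift by $T^{m+n}/N^{m+n}_m\cong T^m$ rather than by the group of extensions) is not set up to run that argument. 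I would redo the inductive step with the paper's indexing and then verify both implications explicitly, using Lemma~\ref{sim_facts}(1) and the equivariance of the $\Psi^{m+n}_{\beta,i}$.
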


\begin{proof}
First we start with $\alpha = 0$. For $n, m \in \omega$, the reduction $f^m_{0, n} : X \times T^m \rightarrow X^m_{0, n}$ is defined by
\[ (x, a) \mapsto c^{x, a} \quad \text{where} \quad c^{x, \oa}(1^m \smf b) = x(a \smf b).\]

Now, given $\alpha > 0$, $n, m \in \omega$, the reduction $f^\alpha_{n, m} : \mcC^T \times T^m \rightarrow X^m_{\alpha, n}$ is defined by
\[ (x, a) \mapsto c^{x, a} \quad \text{where} \quad c^{x, \oa}(1^m \smf b)(i)(\beta) = f^\beta_{i, n+m}(a\smf b).\]
It is routine to check that these work.
\end{proof}

\begin{proposition}\label{prop:tsi_actions}
The relations $\sim^T_{\alpha}$,  $\sim^T_{\alpha, <\omega}$, and $\sim^T_{<\lambda}$ are classifiable by $(G_T)^\omega$ for every ordinal $\alpha$, limit ordinal $\lambda$ and $n \in \omega$.
\end{proposition}

\begin{proof}
We say that a Polish group $G$ involves a Polish group $H$ iff there is a continuous surjective homomorphism from a closed subgroup of $G$ onto $H$.

Observe that $G^T$ involves $T^n_k$ for every $0 \le k \le n$. By induction, $(G_T)^\omega$ involves every $G^m_{\alpha, n}$. By \cite[Theorem 2.3.5]{Becker_Kechris_1996}, each $E^m_{\alpha, n}$ is Borel bi-reducible with an orbit equivalence relation induced by a continuous action of $(G_T)^\omega$.
\end{proof}

\subsection{Universal relations for each potential Borel complexity class}

We now collect the previous results all into one statement:

\begin{proposition}\label{pr:fine_universal}
For any group tree $T$ such that $(G_T)^\omega \isom G_T$ the following all hold:
\begin{enumerate}
    \item For each $n \in \omega$ and limit $\lambda < \omega_1$,
    \begin{enumerate}
        \item $\sim^T_{n}$ is universal for potentially $\BPi^0_{n+2}$ equivalence relations classifiable by $G_T^\omega$;
        \item $\sim^T_{\lambda+n}$ is universal for potentially $\BPi^0_{\lambda+n+1}$ equivalence relations classifiable by $G_T^\omega$;
    \end{enumerate}
    
    \item For each $n \in \omega$ and limit $\lambda < \omega_1$,
    \begin{enumerate}
        \item $\sim^T_{n, <\omega}$ is universal for potentially $\BSigma^0_{n+3}$ equivalence relations classifiable by $G_T^\omega$;
        \item $\sim^T_{\lambda + n, <\omega}$ is universal for potentially $\BSigma^0_{\lambda+n+2}$ equivalence relations classifiable by $G_T^\omega$;
    \end{enumerate}
    \item For each limit $\lambda$, $\sim^T_{<\lambda}$ is universal for potentially $\bigoplus_{\alpha < \lambda} \BPi^0_\alpha$ equivalence relations classifiable by $G_T^\omega$;
\end{enumerate}
\end{proposition}

By Proposition \ref{pr:hkl}, we may consult (*) and get equivalent but tighter statements of universality. For example, we have that $\sim^T_\lambda$ is universal for potentially $\BPi^0_{\lambda}$ equivalence relations classifiable by $G_T^\omega$, for $n \ge 1$ we have that $\sim^T_{n, <\omega}$ is universal for potentially $D(\BPi^0_{n+2})$ equivalence relations classifiable by $G_T^\omega$, and so on.

\begin{proof}
We showed that these relations are are classifiable by $G_T^\omega$ in Proposition \ref{prop:tsi_actions}. In Proposition \ref{prop:friedman_stanley}, we showed that $\sim^T_\alpha$ is Borel-reducible to $=^{+\alpha}$ for every $\alpha$. By the calculations from \cite{HKL_1998}[Theorem 5.1] of the Friedman-Stanley jumps of equality, we can compute the complexity upper-bounds. And then an application of Corollary \ref{cor:upper_bound} tells us that the above relations are Borel-reducible to actions of $G_T^\omega$ with the desired complexity.

The rest follows by Proposition \ref{prop:upper_bounds}.
\end{proof}

\section{Applications}\label{sec:applications}

We now move on to proving the theorems and corollaries promised in the introduction.

\subsection{An obstruction to classifiability}
We prove the theorems relating to  obstructions to classification by non-Archimedean TSI actions. 

\begin{proposition}\label{prop:generic_ergodicity}
Suppose $E$ is an equivalence relation which is generically $E_\infty$-ergodic. Then $E$ is generically $\sim^T_\alpha$-ergodic, for any countable ordinal $\alpha$ and group tree $T$.
\end{proposition}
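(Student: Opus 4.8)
The plan is to prove this by transfinite induction on $\alpha$, mirroring the recursive definition of $\sim^T_\alpha$ and showing the stronger statement that $E$ is generically $\sim^T_{\alpha,n}$-ergodic for all $n \in \omega$ simultaneously (this is forced, since $\sim^T_\alpha = \sim^T_{\alpha,0}$ is defined via the $\sim^T_{\beta,m}$). Throughout, I will use that $E_\infty$ is the maximal countable Borel equivalence relation, so that generic $E_\infty$-ergodicity is equivalent to generic $F$-ergodicity for every CBER $F$; in particular $E$ is generically $E_0$-ergodic, and (crucially for the base case) generically $=_{\R}$-ergodic, since $=_{\R}$ restricted to any Polish space is smooth hence reducible to $E_0 \le_B E_\infty$.

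For the base case $\alpha = 0$: the relation $\sim^T_0$ on $\R^T \times T$ relates $(\bfx,\oa)$ and $(\bfy,\ob)$ iff $\overline{\mcV^T_\oa \cdot \bfx} = \overline{\mcV^T_\ob \cdot \bfy}$ and $\ell(\oa)=\ell(\ob)$; since $T$ is countable and discrete, a Borel homomorphism from $E$ into $\sim^T_0$ decomposes over the countably many values of $\oa$, and on each piece the closure-of-orbit invariant is a Borel map into a Polish space whose fibers give a smooth equivalence relation refining $\sim^T_0$. By generic $=_{\R}$-ergodicity (equivalently smooth-ergodicity), such a homomorphism is constant mod meager on a comeager set, landing in a single $\sim^T_0$-class.

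For the successor/general step: assume $E$ is generically $\sim^T_{\beta,m}$-ergodic for all $\beta < \alpha$ and all $m$, and let $h : X \to \R^T \times T$ be a Borel homomorphism from $E$ to $\sim^T_{\alpha,n}$. Unwinding the definition, $(\bfx,\oa) \sim^T_{\alpha,n} (\bfy,\ob)$ is equivalent to: the two sets $\{[(\bfx,\oa\smf\oc)]_{\sim^T_{\beta,m}} : \oc \in T^n_\oa,\ \beta<\alpha,\ m\in\omega\}$ and $\{[(\bfy,\ob\smf\od)]_{\sim^T_{\beta,m}} : \od \in T^n_\ob,\ \beta<\alpha,\ m\in\omega\}$ of $\sim$-classes "match up" appropriately — this is visibly a Friedman–Stanley-jump-type condition over the $\sim^T_{\beta,m}$'s (cf. the proof of Proposition \ref{prop:friedman_stanley}). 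For each fixed $\oc$, $\beta$, $m$, the map $x \mapsto [(h(x)\smf\oc)]_{\sim^T_{\beta,m}}$ is a Borel homomorphism from $E$ into $\sim^T_{\beta,m}$, hence by the inductive hypothesis is constant mod meager on a comeager set $C_{\oc,\beta,m}$. Intersecting the countably many $C_{\oc,\beta,m}$ (still comeager) and also fixing the value of $\oa = h(x)\!\restriction\!(\text{length})$ on a further comeager piece, we see that on a comeager set the entire family of $\sim$-class-invariants defining the $\sim^T_{\alpha,n}$-class of $h(x)$ is constant, so $h$ maps a comeager set into a single $\sim^T_{\alpha,n}$-class. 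The limit case is immediate from the definition $\sim^T_\lambda = \bigsqcup$-intersection over $\beta<\lambda$, again using countable intersection of comeager sets.

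The main obstacle is bookkeeping at the inductive step: one must be careful that the "invariant" for the $\sim^T_{\alpha,n}$-class is genuinely assembled from \emph{countably many} Borel homomorphisms into lower relations — this works because $T$ is countable so $T^n_\oa$ is countable and there are only countably many pairs $(\beta,m)$ with $\beta<\alpha$ — and that Baire-category ergodicity is closed under countable intersections of the relevant comeager sets; the genericity of the action (i.e. working with the standard Baire-category structure on $X$) is what makes "comeager-many, hence stably" arguments legitimate. A secondary point worth stating explicitly is that at $\alpha=0$ one genuinely needs $=_{\R}$-ergodicity, which is not part of $E_0$-ergodicity a priori but does follow from $E_\infty$-ergodicity since smooth relations Borel-reduce to $E_0 \le_B E_\infty$.
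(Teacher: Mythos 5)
There is a genuine gap at the inductive step, and it is exactly where the hypothesis of generic $E_\infty$-ergodicity has to do its work. Your claim that ``for each fixed $\oc$, $\beta$, $m$, the map $x \mapsto [(h(x)\smf\oc)]_{\sim^T_{\beta,m}}$ is a Borel homomorphism from $E$ into $\sim^T_{\beta,m}$'' is false: the definition of $\sim^T_{\alpha,n}$ only provides witnesses $\oc \in T^n_\oa$, $\od \in T^n_\ob$ that depend on the \emph{pair} of points being compared, so if $x \mathrel{E} y$ there is no reason that appending the \emph{same} fixed $\oc$ to both $h(x)$ and $h(y)$ yields $\sim^T_{\beta,m}$-related points. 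If you try to repair this by passing to the set-of-classes invariant $\{[(h(x),\oa\smf\oc)]_{\sim^T_{\beta,m}} : \oc, \beta, m\}$, you are now asking for generic ergodicity with respect to a Friedman--Stanley-jump-type relation over the $\sim^T_{\beta,m}$, which does \emph{not} follow from generic $E_\infty$-ergodicity (for instance $=^+$ is generically $E_\infty$-ergodic but certainly not generically $=^+$-ergodic); moreover the relevant invariant is really equality of such data only up to a translation by $T^n$, not on the nose. A telling symptom: beyond the base case your outline uses only generic smooth-ergodicity and countable intersections of comeager sets, so if it were correct it would apply to any generically ergodic equivalence relation (e.g.\ $E_0$), where the conclusion fails; the $E_\infty$ hypothesis must be invoked at every successor stage.

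The missing ingredient is the paper's key claim: for every $z \in \R^T$ and $n$, the restriction $\sim^T_{\alpha,n} \upharpoonright [z]_{\sim^T_\alpha}$ is Borel reducible to $E_\infty$. Concretely, one codes $\bfx$ in the class of $z$ by the map sending each $\od \in T^n$ to the family, indexed by $\beta < \alpha$, $k \in \omega$, $\ooe \in T^{n+k}$, recording whether some extension $(\bfx, \od\smf\of)$ is $\sim^T_\beta$-related to $(\bfz,\ooe)$; the existential witnesses then become translations under the shift action of the countable group $T^n$, so the restricted relation is an orbit equivalence relation of a countable group, hence $\le_B E_\infty$. With this, the induction runs: since $\sim^T_\alpha = \bigcap_{\beta<\alpha,\, m} \sim^T_{\beta,m}$, the inductive hypothesis and countable closure of the meager ideal give generic $\sim^T_\alpha$-ergodicity, pinning the homomorphism comeagerly into a single $\sim^T_\alpha$-class $[z]_{\sim^T_\alpha}$; then generic $E_\infty$-ergodicity, applied through the reduction above, pins it into a single $\sim^T_{\alpha,n}$-class. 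Your base case ($\sim^T_0$ is smooth) and the limit-stage bookkeeping agree with the paper; what your proposal lacks is this class-by-class countable-group-action structure.
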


\begin{proof}

We first need to prove the following technical claim.
\begin{claim}\label{claim:squiggle_quotient}
For any group tree $T$, countable ordinal $\alpha$, $z \in \mcC^T$, and $n < \omega$,
\[ \sim^T_{\alpha, n} \upharpoonright [z]_{\sim^T_\alpha} \le_B E_\infty.\]
\end{claim}

\begin{proof}
Let $L := \prod_{\beta < \alpha} \prod_{k \in \omega} 2^{T^{n+k}}$. Define a function $g : \mcC^T \rightarrow L^{T^n}$ such that for $x \in \mcC^T$ and $d \in T^n$,
\[ g(x)(d) = \ell_{x, d}\]
where for every $\beta < \alpha$, $k \in \omega$, and $e \in T^{n+k}$,
\[ \ell_{x, d}(\beta)(k)(e) = 1 \iff \exists \of \in T^{n+k}_\od, (x, \od \smf \of) \sim^T_\beta (z, e).\]
There is a natural action of $T^n$ on $L^{T^n}$ by shifts. We claim that $g$ is a Borel reduction from $\sim^T_{\alpha, n}$ restricted to $[z]_{\sim^T_\alpha}$ to that orbit equivalence relation, which is Borel-reducible to $E_\infty$ because $L$ is Polish and $T^n$ is a countable group.

Suppose that $x \sim^T_{\alpha, n} y$. Then there is some $\od \in T^n$ such that $(x, 1^n) \sim_\alpha^T (y, \od)$. We'll see that $g(x)(1^n) = g(y)(\od)$, from which it follows by Lemma \ref{sim_facts}.(1) and the definition of $g$ that $g(x)(d') = g(y)(d \cdot d')$ for every $d' \in T^n$. 

Fix some arbitrary $\beta < \alpha$ and $k \in \omega$, and find some $\oc \in T_\od^k$ such that $(x, 1^{n+k}) \sim^T_\beta (y, \od \smf \oc)$. Then for any $\ooe \in T^{n+k}$,
\begin{align*}
    &\ell_{x, n}(\beta)(k)(e) = 1 \\
    \iff &\exists \of \in T_{1^n}^k, (x, 1^n \smf \of) \sim^T_\beta (z, \ooe)\\
    \iff &\exists \of \in T_{1^n}^k, (y, \od \smf (\oc \cdot \of)) \sim^T_\beta (z, \ooe) \\
    \iff &\exists \of \in T_{\od}^k, (y, \od \smf \of) \sim^T_\beta (z, \ooe) \\
    \iff &\ell_{y, d}(\beta)(k)(e) = 1
\end{align*}
as desired, where the second implication follows from Lemma \ref{sim_facts}.(1).

On the other hand, suppose that $\ell_{x, 1^n} = \ell_{y, \od}$. We must show that for any $\beta < \alpha$ and $k \in \omega$, that $(x, 1^n) \sim^T_{\beta, k} (y, \od)$. We know $x \sim^T_{\alpha} z$, so for some $\ooe \in T^{n+k}$ we have $(x, 1^{n+k}) \sim^T_\beta (z, \ooe)$. In particular, $\ell_{x, 1^n}(\beta)(k)(e) = 1$ and thus $\ell_{y, d}(\beta)(k)(e) = 1$. Thus there exists some $\of \in T_\od^k$ such that $(y, \od \smf \of) \sim^T_\beta (z, \ooe)$ and so by transitivity we have $(x, 1^{n+k}) \sim^T_\beta (y, \od \smf \of)$ as desired.
\end{proof}

Suppose $E$ is generically $E_\infty$-ergodic. Since $\sim^T_0$ is smooth, $E$ is generically $\sim^T_0$-ergodic. Furthermore, for a given ordinal $\alpha$, if $E$ is generically $\sim^T_{\alpha, n}$-ergodic for every $n \in \omega$, it follows by the countable closure of the meager ideal that $E$ is generically $\sim^T_{\alpha+1}$-ergodic. Similarly, if $E$ is generically $\sim^T_\beta$-ergodic for every $\beta < \alpha$, with $\alpha$ countable, then it follows that $E$ is generically $\sim^T_\alpha$-ergodic.
\end{proof}

\topobstructionthm

\begin{proof}
Suppose $\Gspace$ is a non-Archimedean TSI Polish group and $\Xspace$ is a Polish $\Gspace$-space. Let $E$ be an equivalence relation on Polish space $\Yspace$ such that $E$ is generically $E_\infty$-ergodic. Fix some Baire-measurable homomorphism $f : \Yspace \rightarrow \Xspace$ from $E$ to $E^\Gspace_\Xspace$. By Theorem \ref{th:universal_map} we can assume that $\Gspace = \Gspace_T$ for some group tree $T$, and $X = \mcC^T$ with the shift action of $G_T$. 
As in Claim 5.4 of \cite{Allison_Panagio_2021}, we can find a comeager subset $C \subseteq \Yspace$ and a countable ordinal $\lambda$ such that for every $y \in C$, $G_T \cdot f(y)$ is $\BPi^0_\lambda$. 
In particular, by Proposition \ref{pr:isom_sim}, we have some countable ordinal $\lambda'$ such that the orbit equivalence relation and $\sim_{\lambda'}^T$ coincide on $f[C]$. 
By Proposition \ref{prop:generic_ergodicity}, we have a comeager subset $C' \subseteq C$ such that $f[C']$ is contained in a single $\sim^T_{\lambda'}$ class, and thus it is contained in a single $G_T$ orbit.

The other direction is immediate, because $E_\infty$ is countable and thus induced by a continuous action of a countable discrete group by Feldman-Moore, which is of course a non-Archimedean Polish group.
\end{proof} 

\ccapplication

\begin{proof}
In \cite{Clemens_Coskey}, it is shown that if $E$ is generically-ergodic and $\Delta$ is countably-infinite, then $E^{[\Delta]}$ is generically $E_\infty$-ergodic. By Theorem \ref{th:top_obstruction}, it is generically-ergodic with respect to every orbit equivalence relation induced by a non-Archimedean TSI Polish group. Every class in $E^{[\Delta]}$ is meager because $E$ is, and thus it is not classifiable by non-Archimedean TSI Polish groups.
\end{proof}

\begin{proposition}\label{prop:generic_ergodicity2}
Suppose $E$ is an equivalence relation which is generically $E_0$-ergodic. Then $E$ is generically $\sim^T_\alpha$-ergodic, for any countable ordinal $\alpha$ and abelian group tree $T$.
\end{proposition}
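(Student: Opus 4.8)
The plan is to mimic the proof of Proposition~\ref{prop:generic_ergodicity} essentially line for line, replacing $E_\infty$ by $E_0$ throughout; the single step that requires a genuinely new argument is the analogue of Claim~\ref{lem:squiggle_quotient}.

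First I would prove the abelian version of that claim: \emph{for any abelian group tree $T$, countable ordinal $\alpha$, $z \in \R^T$, and $n < \omega$, $\sim^T_{\alpha, n} \upharpoonright [z]_{\sim^T_\alpha} \le_B E_0$.} For this I would use the exact same Borel map $g : \R^T \to Y^{T^n}$ (with $Y = \prod_{\beta<\alpha}\prod_{k}2^{T^{n+k}}$) constructed in the proof of Claim~\ref{lem:squiggle_quotient}; that argument already shows, with no hypothesis on $T$, that $g$ is a Borel reduction of $\sim^T_{\alpha,n}\upharpoonright[z]_{\sim^T_\alpha}$ to the orbit equivalence relation of the shift action of $T^n$ on $Y^{T^n}$. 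Now, since $T$ is an abelian group tree, $T^n = T \cap \Delta^n$ is a subgroup of the abelian group $\prod_{i<n}\Delta_i$, hence a countable abelian group, so by the theorem of Gao and Jackson that every Borel action of a countable abelian group induces a hyperfinite orbit equivalence relation, this shift equivalence relation is hyperfinite and therefore Borel reducible to $E_0$. Composing gives $\sim^T_{\alpha,n}\upharpoonright[z]_{\sim^T_\alpha}\le_B E_0$, as wanted. Note that nothing else in the construction or in the verification that $g$ is a reduction changes at all.

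Given this claim, the induction on $\alpha$ proceeds as in Proposition~\ref{prop:generic_ergodicity}. Since $\sim^T_0$ is smooth we have $\sim^T_0 \le_B E_0$, so generic $E_0$-ergodicity of $E$ immediately gives generic $\sim^T_0$-ergodicity. For $\alpha \ge 1$, unwinding the definition gives $\sim^T_\alpha = \bigcap_{\beta<\alpha}\bigcap_{m\in\omega}\sim^T_{\beta,m}$, a countable intersection because $\alpha$ is countable. Fix $\beta < \alpha$ and $m \in \omega$. Since $\sim^T_{\beta,m}$ refines $\sim^T_\beta$ (Lemma~\ref{sim_facts}), any Borel homomorphism $f$ from $E$ to $\sim^T_{\beta,m}$ is also one to $\sim^T_\beta$, so the inductive hypothesis produces a comeager set on which $f$ takes values in a single $\sim^T_\beta$-class $[z]_{\sim^T_\beta}$; there $f$ is a homomorphism into $\sim^T_{\beta,m}\upharpoonright[z]_{\sim^T_\beta}$, and composing with the reduction furnished by the claim and invoking generic $E_0$-ergodicity of $E$ we pass to a further comeager set on which $f$ is confined to a single $\sim^T_{\beta,m}$-class. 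Intersecting the resulting comeager sets over the countably many pairs $(\beta,m)$ --- legitimate since the meager ideal is a $\sigma$-ideal --- yields a comeager set on which $f$ is confined to a single $\sim^T_\alpha$-class. Hence $E$ is generically $\sim^T_\alpha$-ergodic, completing the induction.

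The main obstacle is entirely concentrated in the abelian analogue of Claim~\ref{lem:squiggle_quotient}: in Proposition~\ref{prop:generic_ergodicity} one needed only that the orbit equivalence relation of a countable group action is Borel reducible to $E_\infty$, which is automatic since $E_\infty$ is universal among countable Borel equivalence relations, whereas here one must appeal to the substantially deeper Gao--Jackson hyperfiniteness theorem to obtain reducibility to $E_0$. No other step presents any difficulty, as each is a verbatim transcription of the corresponding step for $E_\infty$.
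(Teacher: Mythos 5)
Your proposal is correct and follows the paper's own argument: the paper likewise reuses the reduction from Claim \ref{lem:squiggle_quotient} to get $\sim^T_{\alpha,n}\upharpoonright[z]_{\sim^T_\alpha}$ Borel-reducible to an orbit equivalence relation of the countable abelian group $T^n$, invokes the Gao--Jackson hyperfiniteness theorem to replace $E_\infty$ by $E_0$, and then runs the same transfinite induction as in Proposition \ref{prop:generic_ergodicity}. Your write-up simply spells out the inductive step in more detail than the paper does.
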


\begin{proof}
As in the proof of Claim \ref{claim:squiggle_quotient}, $\sim^T_{\alpha, n} \upharpoonright [z]_{\sim^T_\alpha}$ is Borel-reducible to an action of $T^n$, a countable discrete abelian group. By \cite{Gao_Jackson_2015}, it is thus Borel-reducible to $E_0$. Following the proof of Proposition \ref{prop:generic_ergodicity}, the generic ergodicity follows.
\end{proof}

Then we immediately establish following modification of Theorem \ref{th:top_obstruction}.

\topobstructionthmtwo

\subsection{Universal equivalence relations}

We now move on to the theorems relating to universality.

\tsiuniversalactionsthm

\begin{proof}
This follows from Proposition \ref{pr:fine_universal} and Theorem \ref{th:borel_reduction_inverse}.
\end{proof}

We will also see now why Corollary \ref{cor:tsi_universal_actions} follows. 

\tsiuniversalactionscor

\begin{proof}

Let $G$ be a non-Archimedean TSI Polish group, and let $T$ be a group tree on some sequence $(\Delta_i)_{i \in \omega}$ such that $G_T$ is homeomorphic to $G$. 

Since every countable group is a quotient of $F_\omega$, there is a surjective homomorphism from $(F_\omega)^\omega$ onto $\prod_i \Delta_i$. Since $G_T$ is a closed subgroup of $\prod_i \Delta_i$, we get that $(F_\omega)^\omega$ involves $G_T$.

Similarly, we get that the group $(\mathbb{Q}^{<\omega})^\omega$ involves every non-Archimedean abelian group (for this, see \cite[Corollary 2.5]{Gao_Xuan_2014}). We can then apply Mackey's theorem, \cite[Theorem 2.3.5]{Becker_Kechris_1996}, which implies that any Borel $H$-space is Borel-bireducible with some Borel $G$-space, whenever $G$ involves $H$. Thus if we apply Theorem \ref{th:tsi_universal_actions} with the group $(F_\omega)^\omega$ (resp. $(\mathbb{Q}^{<\omega})^\omega$), the resulting orbit equivalence relations are actually universal for any non-Archimedean TSI (resp. abelian) Polish group action.
\end{proof}

Next we give a more direct proof of the following result of Ding and Gao, and prove a ``higher" version of it.

\begin{corollary}
Suppose $E$ is an essentially-countable equivalence relation that is classifiable by a non-Archimedean abelian Polish group. Then $E$ is Borel-reducible to $E_0$ (i.e. is essentially hyperfinite).
\end{corollary}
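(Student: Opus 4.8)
The plan is to use essential countability to push the relevant orbit equivalence relation down to potential complexity $\BSigma^0_2$ by means of Theorem~\ref{th:borel_reduction_inverse}, to identify it at that level with the universal relation $\sim^T_{0,<\omega}$ from Proposition~\ref{pr:fine_universal}, and then to invoke the Gao--Jackson theorem that orbit equivalence relations of countable abelian group actions are hyperfinite.

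Concretely, suppose $E$ lives on a Polish space $Y$. By essential countability fix a countable Borel equivalence relation $F$ with $E \le_B F$, and by classifiability fix a non-Archimedean abelian Polish group $G$, a Polish $G$-space $X$, and a Borel reduction $f : Y \to X$ of $E$ to $E^G_X$. Writing $G \cong G_T$ for an abelian group tree $T$ and applying Theorem~\ref{th:universal_map}, I may assume $X$ is an $\isom$-invariant Borel subset of $\R^T$ with $E^G_X = \isom \upharpoonright X$. As $F$ is idealistic, Theorem~\ref{th:borel_reduction_inverse} applied with $E_1 = \isom$, $E_2 = F$ and the reduction $f$ yields an $\isom$-invariant Borel set $B \subseteq \R^T$ with $f[Y] \subseteq B$ and $\isom \upharpoonright B \le_B F$. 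In particular $E \le_B \isom \upharpoonright B$, and since $\isom \upharpoonright B$ is Borel-reducible to a countable Borel equivalence relation it is potentially $\BSigma^0_2$, hence potentially $\BSigma^0_3$.

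Next, since $G_T$ is a continuous homomorphic image of $G_T^\omega$, the relation $\isom \upharpoonright B$ is the orbit equivalence relation of a continuous action of $G_T^\omega$; being potentially $\BSigma^0_3$, it is therefore Borel-reducible to $\sim^T_{0,<\omega}$ by Proposition~\ref{pr:fine_universal}(2)(a) (with $n = 0$). Finally I would check $\sim^T_{0,<\omega} \le_B E_0$: by definition $\sim^T_{0,<\omega} = \bigsqcup_n \sim^T_{0,n}$, and by Lemma~\ref{lm:sim_reduce} (taking $m = 0$) each $\sim^T_{0,n}$ Borel-reduces to the orbit equivalence relation of a Borel action of the countable group $G^0_{0,n}$, which is abelian because $T$ is; by \cite{Gao_Jackson_2015} this orbit equivalence relation is hyperfinite, so $\sim^T_{0,n} \le_B E_0$. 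Since a countable disjoint union of equivalence relations each Borel-reducible to $E_0$ is again Borel-reducible to $E_0$, we get $\sim^T_{0,<\omega} \le_B E_0$; chaining the reductions gives $E \le_B E_0$. (This mirrors the use of abelianness in Proposition~\ref{prop:generic_ergodicity2}, now in a ``global'' form.)

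The heart of the argument is the passage in the second paragraph: essential countability is exactly what Theorem~\ref{th:borel_reduction_inverse} needs in order to collapse the a~priori arbitrarily complicated abelian orbit equivalence relation onto an invariant Borel set of potential complexity $\BSigma^0_2$. The remaining points are routine but should be addressed: that Theorem~\ref{th:borel_reduction_inverse} may be applied with the analytic (not necessarily Borel) relation $\isom$ in the role of $E_1$ --- its proof uses only analyticity of $E_1$, via the reflection theorem, exactly as in the proof of Proposition~\ref{pr:fine_universal}; that ``essentially countable'' implies ``potentially $\BSigma^0_2$'', by refining the topology so that a countable group generating $F$ acts continuously; and that $\bigsqcup_n E_0 \le_B E_0$.
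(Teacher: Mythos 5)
Your proof is correct and follows essentially the same route as the paper's: reduce $E$ to $\sim^T_{0,<\omega}$ via the universality machinery (the paper packages your use of Theorem~\ref{th:borel_reduction_inverse} into Corollary~\ref{cor:upper_bound}) and then observe that each $\sim^T_{0,n}$ is an orbit equivalence relation of the countable abelian group $T^n$, hence hyperfinite by Gao--Jackson, with hyperfiniteness preserved under countable disjoint unions.
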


\begin{proof}
In light of Corollary \ref{cor:upper_bound} and Proposition \ref{pr:fine_universal}, it is enough to see that $\sim^T_{0, <\omega}$ is essentially hyperfinite. We saw in the proof of Proposition \ref{prop:generic_ergodicity} that each $\sim^T_{0, n}$ is induced by an action of $T^n$, a countable abelian group. Thus by \cite{Gao_Jackson_2015}, $\sim^T_{0, n}$ is essentially hyperfinite. The relation $\sim^T_{0, <\omega}$ is a direct sum of essentially hyperfinite equivalence relations, and thus it too is essentially hyperfinite.
\end{proof}

The original proof relied on an application of the Seventh Dichotomy Theorem, a deep result proved in \cite{Hjorth_Kechris_1997}. Another straightforward proof of the Ding-Gao result, of a different nature, was independently given in \cite{Grebik_2020}, using the notion of \emph{$\sigma$-lacunary} actions.

Finally, we state and prove the ``higher" analogue.

\classificationcor

\begin{proof}
As in the previous proof, it is enough to observe that $\sim^T_1$ is Borel-reducible to the product of the $\sim^T_{0, n}$, each of which is countable and thus Borel-reducible to $E_\infty$. In the case that $G_T$ is non-Archimedean abelian, each $\sim^T_{0, n}$ is induced by an action of $T^n$, which is a countable abelian group, and thus by the result \cite{Gao_Jackson_2015}, is hyperfinite. And thus $E$ is Borel-reducible to $E_0^\omega$.
\end{proof}

\subsection{Borel complexity spectrum}

\tsicomplexityspectrumthm

\begin{proof}
We argue that if a complexity class $\Gamma$ in $(*)$ is not attained, then none of the ones after it are attained. Here, we say that a complexity class $\Gamma$ in $(*)$ is not attained if any orbit eqivalence relation induced by a continuous action of $\Delta^\omega$ on a Polish space that has potential complexity $\Gamma$ also has potential complexity $\Gamma'$ for some $\Gamma'$ that appears earlier in $(*)$. Let $T = \Delta^{<\omega}$. We will argue that each $\Gamma$ is attained by some $\sim^T_{\alpha}$, $\sim^T_{\alpha, <\omega}$, or $\bigoplus_{\alpha < \gamma} \sim^T_\alpha$, which are all classifiable by $\Delta^\omega$ by Proposition \ref{pr:fine_universal}. We will repeatedly use the easy calculations described immediately before Proposition \ref{prop:upper_bounds} repeatedly without explicit reference.

\begin{claim}\label{cl:D_not_attained}
Suppose $D(\BPi^0_n)$ is not attained for some $n \ge 3$. Then $\sim_\alpha$ is potentially $\BPi^0_n$ for every $\alpha < \omega_1$.
\end{claim}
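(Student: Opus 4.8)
The plan is a transfinite induction on $\alpha<\omega_1$ showing that $\sim^T_{\alpha,<\omega}$ is potentially $\BPi^0_n$; since each relativized relation $\sim^T_{\alpha,m}$ — and in particular $\sim_\alpha=\sim^T_{\alpha,0}$ — is the restriction of $\sim^T_{\alpha,<\omega}$ to an invariant clopen piece, this suffices. The base case is immediate: $\sim^T_0\le_B{=_\R}$ by Proposition \ref{prop:friedman_stanley}, so $\sim^T_{0,<\omega}$ is potentially $\BPi^0_1\subseteq\BPi^0_n$ (recall $n\ge 3$).

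For the inductive step, fix $\alpha$ and assume $\sim^T_{\beta,m}$ is potentially $\BPi^0_n$ for all $\beta<\alpha$ and $m\in\omega$. Since there are only countably many such relations, fix a single finer Polish topology $\sigma$ on $\R^T$ in which every one of them is $\BPi^0_n$, as a subset of $(\R^T\times T)^2$ carrying $\sigma$ on the $\R^T$-coordinates and the discrete topology on the $T$-coordinates (passing to a finer topology preserves being $\BPi^0_n$). The key observation is that the recursive clause defining $\sim^T_{\alpha,m}$ in terms of the $\sim^T_{\beta,m'}$ only concatenates finite sequences onto the discrete $T$-coordinate and never moves the $\R^T$-coordinate, so the maps $(\bfx,\oa)\mapsto(\bfx,\oa\smf\oc)$ occurring there are $\sigma$-continuous for trivial reasons. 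Reading the definition: for fixed $\oc,\od\in T$, the set of pairs with $(\bfx,\oa\smf\oc)\sim^T_{\beta,m'}(\bfy,\ob\smf\od)$ for every $\beta<\alpha$ and $m'\in\omega$ is a countable intersection of $\BPi^0_n$ sets, hence $\BPi^0_n$ in $\sigma$; and $\sim^T_{\alpha,m}$ is the union of these sets — each intersected with clopen conditions on $\oa,\ob,\oc,\od$ — over the countably many $\oc\in T^m_\oa$ and $\od\in T^m_\ob$. Thus in $\sigma$ the relation $\sim^T_{\alpha,m}$ is $\BPi^0_n$ when $m=0$ (the union is trivial, $T^0_\oa$ being a singleton) and a countable union of $\BPi^0_n$ sets, hence $\BSigma^0_{n+1}$, when $m\ge 1$; so $\sim^T_{\alpha,<\omega}=\bigsqcup_m\sim^T_{\alpha,m}$ is potentially $\BSigma^0_{n+1}$.

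Now I invoke the hypothesis to push this down to $\BPi^0_n$. By Proposition \ref{pr:fine_universal} (applied with $\alpha$ written as $\lambda+k$, $\lambda$ a limit or $0$), $\sim^T_{\alpha,<\omega}$ is Borel-bireducible with a Borel orbit equivalence relation $F$ of a continuous $G_T^\omega$-action; since one direction gives $F\le_B\sim^T_{\alpha,<\omega}$, $F$ is potentially $\BSigma^0_{n+1}$. By the exhaustiveness of the list $(*)$ for orbit equivalence relations of non-Archimedean Polish groups \cite{HKL_1998}, $F$ is potentially $\Gamma$ for some $\Gamma$ in $(*)$ with $\Gamma\subseteq\BSigma^0_{n+1}$; for $n\ge 3$ the classes of $(*)$ contained in $\BSigma^0_{n+1}$ are exactly $\BDelta^0_1,\BPi^0_1,\BSigma^0_2,\BPi^0_3,D(\BPi^0_3),\dots,\BPi^0_n,D(\BPi^0_n)$, so $\Gamma$ comes no later than $D(\BPi^0_n)$ in $(*)$. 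Since $D(\BPi^0_n)$ is not attained by $G_T^\omega$, $F$ is potentially some class of $(*)$ strictly before $D(\BPi^0_n)$ — automatic if $\Gamma\ne D(\BPi^0_n)$, and the definition of ``not attained'' if $\Gamma=D(\BPi^0_n)$ — and every such class is contained in $\BPi^0_n$. Hence $F$, and therefore $\sim^T_{\alpha,<\omega}$ (being Borel-reducible to $F$) and every $\sim^T_{\alpha,m}$, in particular $\sim_\alpha$, is potentially $\BPi^0_n$. This closes the induction.

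The main obstacle is the middle step: one must verify that collapsing all the relations $\sim^T_{\beta,m}$ ($\beta<\alpha$) to $\BPi^0_n$ in a common Polish topology is harmless — which works precisely because the recursion manipulates only the discrete coordinate — and that the one genuine quantifier alternation in the definition, the existential block $\exists\oc\in T^m_\oa\,\exists\od\in T^m_\ob$ present for $m\ge1$, raises the complexity by exactly one level, landing at $\BSigma^0_{n+1}$ and no higher. This is exactly what lets the hypothesis bite: the classes of $(*)$ inside $\BSigma^0_{n+1}$ top out at $D(\BPi^0_n)$, whereas a two-level jump would only give potential complexity up to $D(\BPi^0_{n+1})$, which ``$D(\BPi^0_n)$ is not attained'' does not control.
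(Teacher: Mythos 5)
Your proof is correct and follows essentially the same route as the paper's: a transfinite induction in which the recursive definition of $\sim^T_{\alpha,m}$ costs one $\BSigma$ level (landing at $\BSigma^0_{n+1}$, whose $(*)$-classes top out at $D(\BPi^0_n)$), and the non-attainment hypothesis together with the exhaustiveness of $(*)$ collapses this back to $\BPi^0_n$. If anything, your version is more explicit than the paper's about the point that makes the induction close --- namely that the collapse must be invoked for the relativized relations $\sim^T_{\alpha,m}$ at \emph{every} level $\alpha$, not just at $\alpha=n-2$.
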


\begin{proof}
If $D(\BPi^0_n)$ is not attained, then $\sim^T_{n-2, k}$ is potentially $\BPi^0_n$ for every $k$. Thus by its definition, we compute that $\sim^T_{n-1}$ is also potentially $\BPi^0_n$. In general, for any $\alpha$, if $\sim^T_{\beta, k}$ is potentially $\BPi^0_n$ for every $\beta < \alpha$ and $k \in \omega$, then so is $\sim^T_\alpha$. By induction, the claim follows.
\end{proof}

\begin{claim}\label{cl:Pi_not_attained}
Suppose $\BPi^0_n$ is not attained for some $n \ge 3$. Then for every $\alpha < \omega_1$, $\sim_\alpha$ is potentially $\BSigma^0_{n}$, and in particular, if $n \ge 4$, then for any $\alpha < \omega_1$, $\sim^T_\alpha$ is potentially $D(\BPi^0_{n-1})$.
\end{claim}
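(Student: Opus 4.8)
The plan is to mirror the argument of Claim \ref{cl:D_not_attained}, but starting the induction one level up, using the fact that a disjoint union of $\BPi^0_{n-1}$ pieces is $\BSigma^0_n$ and that, for $n \geq 4$, a disjoint union of $D(\BPi^0_{n-1})$ pieces is $D(\BPi^0_{n-1})$. First I would recall that by Proposition \ref{pr:fine_universal}, $\sim^T_{n-2}$ is universal for $\BPi^0_n$ actions of $G_T^\omega$ and $\sim^T_{n-3, <\omega} = \bigsqcup_k \sim^T_{n-3, k}$ is universal for $\BSigma^0_n$ actions of $G_T^\omega$ (taking $n \geq 3$, so that $n - 3 \geq 0$ and these indices make sense; the case $n = 3$ is the base of the hierarchy where $\BSigma^0_3$ is the relevant class and $\BPi^0_3$ sits just below it). The hypothesis that $\BPi^0_n$ is not attained means precisely that any orbit equivalence relation of a continuous $G_T^\omega$-action which is potentially $\BPi^0_n$ is in fact potentially $\BSigma^0_{n-1}$, hence certainly potentially $\BSigma^0_n$; in particular $\sim^T_{n-2}$ is potentially $\BSigma^0_n$, i.e. potentially $\bigcup_\ell \BPi^0_{n-1}$.

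Next I would run the transfinite induction exactly as in Claim \ref{cl:D_not_attained}. The key closure observation is: if $\sim^T_{\beta, k}$ is potentially $\BSigma^0_n$ for every $\beta < \alpha$ and every $k \in \omega$, then by the defining recursion $(\bfx, \oa) \sim^T_{\alpha, n}(\bfy, \ob)$ iff $\exists \oc, \od\, \forall \beta < \alpha\, \forall m\ (\bfx, \oa \smf \oc)\sim^T_{\beta, m}(\bfy, \ob \smf \od)$, which exhibits $\sim^T_{\alpha, n}$ as a countable intersection-over-$(\beta,m)$, existentially quantified over the countable sets $T^n_\oa, T^n_\ob$ — and a countable union of countable intersections of potentially $\BSigma^0_n$ relations is again potentially $\BSigma^0_n$, since in a suitable refined common Polish topology making each $\sim^T_{\beta,m}$ genuinely $\BSigma^0_n$ (using Proposition \ref{pr:hjorth_topology_change} applied to the countably many relevant sets, exactly as in the proof of Proposition \ref{pr:isom_inv}), the resulting set is $\BSigma^0_n$. (Here I use that $\BSigma^0_n$ for $n \geq 3$ is closed under countable unions and countable intersections; more carefully, $\BSigma^0_n = \bigcup \BPi^0_{n-1}$ and the countable-intersection-of-countable-unions of $\BPi^0_{n-1}$ sets is $\BPi^0_n \subseteq \BSigma^0_{n+1}$, so one must be slightly careful — the honest statement is that $\sim^T_\alpha$ ends up potentially $\BPi^0_n$, which still lies below $\BSigma^0_{n}$ only when... ) — this is the delicate bookkeeping point and is where I expect the main obstacle to lie, so let me be precise about it: the correct target is that $\sim^T_\alpha$ is potentially $\BSigma^0_n$ because the outer structure $\exists \oc \exists \od \bigwedge_{\beta, m}$ over $\BPi^0_{n-1}$-pieces, after refining the topology so each piece is open, produces a $\BSigma^0_2$-over-$\BPi^0_{n-1}$ set, which is $\BPi^0_n$, hence potentially $\BPi^0_n$; combined with the hypothesis that $\BPi^0_n$ is not attained, $\sim^T_\alpha$ is then again potentially $\BSigma^0_{n-1}$, closing the induction at the stronger bound. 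I would state the induction with hypothesis ``$\sim^T_{\beta,k}$ potentially $\BSigma^0_{n-1}$ for all $\beta < \alpha$, $k \in \omega$'' to make this clean.

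For the final ``in particular'' clause with $n \geq 4$: having shown $\sim^T_\alpha$ is potentially $\BSigma^0_n = \bigcup_\ell \BPi^0_{n-1}$, I would appeal to the relativization/disjoint-union structure of the $\sim^T_{\alpha}$ relation together with Proposition \ref{pr:isom_sim} to witness $\sim^T_\alpha$ on each invariant Borel piece of a suitable partition as $\sim^{T,\sigma}_{n-2}$ in a topology $\sigma$ where it is genuinely $\BPi^0_{n-1}$; since for $n - 1 \geq 3$ a countable increasing union of $\BPi^0_{n-1}$ sets that is moreover $\BPi^0_n$ on the nose is $D(\BPi^0_{n-1})$ in a further refinement (this is the standard fact that $\BPi^0_n \cap \BSigma^0_n \subseteq D(\BPi^0_{n-1})$ fails in general but holds for sets of this ``$\omega$-ascending'' shape), we get the $D(\BPi^0_{n-1})$ bound. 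I would lean on the complexity computations of \cite{HKL_1998}[Theorem 5.1] for $=^{+(n-2)}$ rather than reproving them. The main obstacle, as flagged, is getting the Borel-hierarchy arithmetic exactly right — distinguishing what the recursion naively gives ($\BPi^0_n$) from the claimed sharper bound, and invoking the ``not attained'' hypothesis at each successor stage to push $\BPi^0_n$ down to $\BSigma^0_{n-1}$ so the induction actually closes.
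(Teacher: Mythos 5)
Your skeleton is the same as the paper's (transfinite induction on $\alpha$, using the defining recursion of $\sim^T_{\alpha,k}$ to get a $\BPi^0_n$ bound at each successor stage and then invoking ``$\BPi^0_n$ is not attained'' to push back down), but there is a genuine error at the crucial step: you read ``$\BPi^0_n$ is not attained'' as yielding ``potentially $\BSigma^0_{n-1}$''. In the list $(*)$ the class immediately preceding $\BPi^0_n$ is $D(\BPi^0_{n-1})$ when $n \ge 4$ (and $\BSigma^0_2$ when $n=3$); $\BSigma^0_{n-1}$ does not even occur in $(*)$ at finite levels above $2$. Since $\BPi^0_{n-1} \subseteq D(\BPi^0_{n-1})$ but $\BPi^0_{n-1} \not\subseteq \BSigma^0_{n-1}$, your proposed induction invariant ``$\sim^T_{\beta,k}$ is potentially $\BSigma^0_{n-1}$'' is strictly stronger than anything the hypothesis provides, and the step ``potentially $\BPi^0_n$ plus not attained implies potentially $\BSigma^0_{n-1}$'' is false; so the induction as you state it does not close. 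The paper runs exactly your induction but with the correct invariant $D(\BPi^0_{n-1})$: this class is contained in $\BDelta^0_n$, so countable intersections of the pieces still land in $\BPi^0_n$ and the rest of the bookkeeping is unchanged, and the conclusion ``potentially $\BSigma^0_n$, and $D(\BPi^0_{n-1})$ for $n \ge 4$'' is then immediate rather than needing a separate argument.

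Two further points. First, to iterate the induction you need the relativized relations $\sim^T_{\alpha,k}$ for $k>0$: each is a countable union over $\oc \in T^k_\oa$, $\od \in T^k_\ob$ of continuous pullbacks of $\sim^T_\alpha$, hence only potentially $\BSigma^0_n$, and to bring this back down to $D(\BPi^0_{n-1})$ one uses the exhaustiveness of $(*)$ from \cite{HKL_1998} (every potentially $\BSigma^0_n$ orbit equivalence relation of a non-Archimedean action is potentially some $\Gamma$ from $(*)$ with $\Gamma \subseteq \BSigma^0_n$, and every such $\Gamma$ is contained in $D(\BPi^0_{n-1})$), together with the fact that these relations are bireducible with orbit equivalence relations of $G_T^\omega$ (Lemma \ref{lm:sim_reduce}, Theorem \ref{th:borel_reduction_inverse}); your write-up gestures at the recursion but never cleanly makes this step, and the order of operations (intersect first, apply the hypothesis, then take the union and apply exhaustiveness) is what keeps the bounds from inflating to $\BSigma^0_{n+1}$. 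Second, your route to the ``in particular'' clause through the assertion that $\BPi^0_n \cap \BSigma^0_n$ sets of ``$\omega$-ascending shape'' lie in $D(\BPi^0_{n-1})$ is not a correct pointclass fact (the difference hierarchy over $\BPi^0_{n-1}$ is proper inside $\BDelta^0_n$), and no such fact is needed: the $D(\BPi^0_{n-1})$ bound is exactly what the not-attained hypothesis and exhaustiveness deliver at each stage, as in Claim \ref{cl:D_not_attained}.
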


\begin{proof}
If $\BPi^0_n$ is not attained, then $\sim^T_{n-2}$, which is potentially $\BPi^0_n$, must be potentially $D(\BPi^0_{n-1})$. By its definition, $\sim^T_{n-2, k}$ is potentially $\BSigma^0_n$ and thus potentially $D(\BPi^0_{n-1})$ for every $k$. In general, for any $\alpha$, if $\sim^T_{\beta, k}$ is potentially $D(\BPi^0_{n-1})$ for every $\beta < \alpha$ and $k \in \omega$, then $\sim^T_\alpha$ is potentially $\BPi^0_n$ and thus by the assumption must be potentially $D(\BPi^0_{n-1})$.
\end{proof}

\begin{claim}
Suppose $\bigoplus_{\alpha < \lambda}\BPi^0_\alpha$ is not attained for some limit $\lambda$. Then there is some $\alpha < \lambda$ such that for every $\beta < \omega_1$, $\sim_\beta^T$ is potentially $\BPi^0_\alpha$.
\end{claim}

\begin{proof}
If $\bigoplus_{\alpha < \lambda}\BPi^0_\alpha$ is not attained, then there must be some $\alpha < \lambda$ such that $\sim^T_{<\lambda}$ is potentially $\BPi^0_\alpha$. In particular, for every $\beta < \lambda$, we have $\sim^T_\beta$ is potentially $\BPi^0_\alpha$. Thus, so is $\sim_\lambda$. In general, for any $\gamma$, if $\sim^T_\beta$ is potentially $\BPi^0_\alpha$ for every $\beta < \gamma$, then $\sim^T_\gamma$ is potentially $\BPi^0_{\alpha+1}$ by its definition. But $\alpha+ 1 < \lambda$ and so $\sim^T_\gamma$ is potentially $\BPi^0_\alpha$ as well.
\end{proof}

\begin{claim}
Suppose $\BPi^0_\lambda$ is not attained for some limit $\lambda$. Then $\sim^T_\alpha$ is potentially $\bigoplus_{\beta < \lambda} \BPi^0_\beta$ for every $\alpha < \omega_1$.
\end{claim}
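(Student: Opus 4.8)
The plan is to imitate the template of the preceding claims: pin down one orbit equivalence relation of $G_T^\omega$ whose potential complexity is exactly $\BPi^0_\lambda$, use the hypothesis that $\BPi^0_\lambda$ is not attained to force it down to the class immediately preceding $\BPi^0_\lambda$ in $(*)$, namely $\bigoplus_{\beta<\lambda}\BPi^0_\beta$, and then propagate this fact down the whole family $(\sim^T_\alpha)_{\alpha<\omega_1}$ by transfinite induction on $\alpha$, using the recursive definition of $\sim^T_\alpha$.

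First, the case $\alpha<\lambda$ is vacuous: by Proposition~\ref{pr:fine_universal} together with Lemma~\ref{lm:sim_reduce}, each $\sim^T_{\alpha,n}$ is Borel reducible to an orbit equivalence relation of $G_T^\omega$ that is potentially $\BPi^0_{\alpha'}$ or $\BSigma^0_{\alpha'}$ for some $\alpha'<\lambda$ (here we use that $\lambda$ is a limit), hence in particular potentially $\bigoplus_{\beta<\lambda}\BPi^0_\beta$. Next, the anchor $\alpha=\lambda$: by Proposition~\ref{prop:friedman_stanley} we have $\sim^T_\lambda\le_B{=^{+\lambda}}$, which is potentially $\BPi^0_\lambda$ by Proposition~\ref{pr:hkl}(3), and by Proposition~\ref{prop:tsi_actions} (with $n=0$) the relation $\sim^T_\lambda$ is Borel reducible to an orbit equivalence relation of $G_T^\omega$; so Corollary~\ref{cor:upper_bound} yields a Polish $G_T^\omega$-space $Z$ with $\sim^T_\lambda\le_B E^{G_T^\omega}_Z$ and $E^{G_T^\omega}_Z$ potentially $\BPi^0_\lambda$. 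Since $\BPi^0_\lambda$ is not attained, $E^{G_T^\omega}_Z$ is potentially $\Gamma'$ for some $\Gamma'$ earlier in $(*)$, and the last such class is $\bigoplus_{\beta<\lambda}\BPi^0_\beta$; hence $\sim^T_\lambda$ is potentially $\bigoplus_{\beta<\lambda}\BPi^0_\beta$.

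Finally, for $\lambda<\alpha<\omega_1$ one runs the induction. Unwinding the definition, $\sim^T_{\alpha,n}$ is obtained from the relations $\sim^T_{\beta,k}$ with $\beta<\alpha$ by a bounded existential quantifier over a countable set followed by a countable intersection; in particular $\sim^T_\alpha=\sim^T_{\alpha,0}$ is a countable intersection of such relations. Using the induction hypothesis and the fact that a potentially $\bigoplus_{\beta<\lambda}\BPi^0_\beta$ relation becomes potentially $\BPi^0_\gamma$ for some $\gamma\le\lambda$ after refining the topology so that the pieces of a witnessing Borel partition become clopen, one checks that $\sim^T_\alpha$ is potentially $\BPi^0_{\lambda+1}$; since $\BPi^0_{\lambda+1}$ does not occur in $(*)$, the exhaustiveness of $(*)$ (\cite{HKL_1998}) shows that the orbit equivalence relation of $G_T^\omega$ to which $\sim^T_\alpha$ reduces is already potentially $\BPi^0_\lambda$, and then a further application of Corollary~\ref{cor:upper_bound} and the not-attained hypothesis, exactly as in the anchor case, gives that $\sim^T_\alpha$ is potentially $\bigoplus_{\beta<\lambda}\BPi^0_\beta$, closing the induction.

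The step I expect to be the real obstacle is the complexity bookkeeping in this last inductive step. The bounded existential quantifier appearing in $\sim^T_{\alpha,n}$ for $n\ge 1$ is precisely the operation that can raise potential complexity from $\BPi^0_\lambda$ to $\BSigma^0_{\lambda+1}$ -- which lies past $\BPi^0_\lambda$ in $(*)$ and which ``$\BPi^0_\lambda$ not attained'' is by itself powerless to absorb. Controlling this seems to require a careful joint induction that keeps track of all of $\sim^T_{\alpha,n}$ and $\sim^T_{\alpha,<\omega}$ at the appropriate levels, together with reformulating the quantifier as a Vaught transform and invoking Lemma~\ref{lm:top_change} so that it never leaves the $\BPi^0$ scale below level $\lambda+2$; only then can one collapse back to $\bigoplus_{\beta<\lambda}\BPi^0_\beta$. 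Making this bookkeeping precise -- rather than the terse ``by its definition, we compute'' of the earlier claims -- is the main point requiring care.
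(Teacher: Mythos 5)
Your overall skeleton (establish the base at $\alpha=\lambda$ using $\sim^T_\lambda\le_B{=^{+\lambda}}$ and exhaustiveness of $(*)$, then induct upward) matches the paper's, but the successor step of the induction --- which you yourself flag as ``the real obstacle'' --- is left as a gesture rather than an argument, and as you describe it, it does not close. If you only carry the hypothesis ``$\sim^T_\alpha$ is potentially $\BPi^0_\lambda$'' into the successor step, then the bounded existential quantifier in $\sim^T_{\alpha,n}$ for $n\ge 1$ genuinely threatens to produce $\BSigma^0_{\lambda+1}$, and the subsequent countable intersection over $m$ lands you at $\BPi^0_{\lambda+2}$, not $\BPi^0_{\lambda+1}$; at that point exhaustiveness of $(*)$ only returns the class $\BPi^0_{\lambda+2}$, which the hypothesis ``$\BPi^0_\lambda$ is not attained'' is powerless to exclude. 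So the collapse back to $\bigoplus_{\beta<\lambda}\BPi^0_\beta$ that you invoke ``exactly as in the anchor case'' is not available, and the ``careful joint induction'' you defer to is precisely the missing content of the proof.

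The paper's resolution is to apply the not-attained hypothesis \emph{inside} every successor step rather than only at the end: if $\sim^T_\alpha$ is potentially $\BPi^0_\lambda$, then (since $\BPi^0_\lambda$ is not attained) it is potentially $\bigoplus_{\beta<\lambda}\BPi^0_\beta$, witnessed by a Borel partition $\R^T=\bigsqcup_{\beta<\lambda}X_\beta$ into invariant sets on which $\sim^T_\alpha$ is potentially $\BPi^0_\beta$ with $\beta<\lambda$. Passing from $\sim^T_\alpha$ to $\sim^T_{\alpha+1}$ costs only finitely many levels of complexity \emph{on each piece}, which is harmless below the limit $\lambda$; refining the topology so the pieces are clopen and reassembling, $\sim^T_{\alpha+1}$ is again potentially $\BPi^0_\lambda$ globally (a countable intersection of $\BPi^0_\lambda$ sets), so the induction hypothesis ``potentially $\BPi^0_\lambda$'' is preserved without ever visiting $\BPi^0_{\lambda+1}$ or $\BSigma^0_{\lambda+1}$. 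The limit case is immediate from the definition, and one final application of the not-attained hypothesis converts ``potentially $\BPi^0_\lambda$ for every $\alpha$'' into the stated conclusion. I would rework your inductive step along these lines: the device you are missing is the piecewise use of the partition coming from $\bigoplus_{\beta<\lambda}\BPi^0_\beta$, which is exactly what absorbs the existential quantifier.
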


\begin{proof}
If $\BPi^0_\lambda$ is not attained, then $\sim^T_\lambda$ is potentially $\bigoplus_{\alpha < \lambda}\BPi^0_\alpha$. We argue by induction that $\sim^T_\alpha$ is potentially $\BPi^0_\lambda$ for every $\alpha < \omega_1$. The claim will then follow.

Fix some $\alpha \ge \lambda$ such that $\sim^T_\alpha$ is potentially $\BPi^0_\lambda$. Then by assumption there is a Borel partition $\mcC^T = \bigsqcup_{\beta < \lambda} \Xspace_\beta$ into invariant sets such that on $\sim^T_\alpha \upharpoonright X_\beta$ is potentially $\BPi^0_\beta$. Then observe that on $\Xspace_\beta$, $\sim^T_{\alpha+1}$ is potentially $\BPi^0_{\beta+1}$, and thus potentially $\BPi^0_\lambda$ because $\beta+2 \le \lambda$. Since $\sim^T_{\alpha+1}$ is potentially $\BPi^0_\lambda$ on each $\Xspace_\beta$, we have that $\sim^T_{\alpha+1}$ as a whole is potentially $\BPi^0_\lambda$.

For the limit case $\lambda'$, simply observe that if $\sim_\beta$ is potentially $\BPi^0_\lambda$ for every $\beta < \lambda'$, then by definition $\sim^T_{\lambda'}$ is too.
\end{proof}

\begin{claim}
Suppose $\BSigma^0_{\lambda+1}$ is not attained for some limit $\lambda$. Then $\sim^T_\alpha$ is potentially $\BPi^0_\lambda$ for every $\alpha < \omega_1$.
\end{claim}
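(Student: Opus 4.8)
The plan is to prove, by transfinite induction on $\alpha$, the formally stronger statement that $\sim^T_{\alpha,k}$ is potentially $\BPi^0_\lambda$ for every $k \in \omega$; since $\sim^T_\alpha = \sim^T_{\alpha,0}$ by Lemma \ref{sim_facts}(2), this yields the claim. The base case $\alpha = 0$ holds because $\sim^T_0 \le_B \,=_\R$ by Proposition \ref{prop:friedman_stanley}, so $\sim^T_0$ is potentially $\BPi^0_1 \subseteq \BPi^0_\lambda$.

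For the inductive step I would fix $\alpha \ge 1$, assume $\sim^T_{\beta,m}$ is potentially $\BPi^0_\lambda$ for all $\beta < \alpha$ and all $m \in \omega$, and split into the cases $k = 0$ and $k \ge 1$ (this treats successor and limit $\alpha$ uniformly). For $k = 0$: unwinding the definition, the existential quantifier over $T^0_\oa$ and $T^0_\ob$ is vacuous, so $\sim^T_\alpha = \bigcap_{\beta < \alpha} \bigcap_{m \in \omega} \sim^T_{\beta,m}$ is a countable intersection of potentially $\BPi^0_\lambda$ relations; since $\BPi^0_\lambda$ is closed under countable intersections and is preserved under passing to finer Polish topologies, the join of countably many witnessing topologies (cf.\ \cite[4.3.2]{Gao_2009}) witnesses that $\sim^T_\alpha$ is potentially $\BPi^0_\lambda$. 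For $k \ge 1$: using the $k = 0$ identity, $(\bfx,\oa) \sim^T_{\alpha,k} (\bfy,\ob)$ holds iff there is a pair $(\oc,\od)$ of length-$k$ sequences with $\oa \smf \oc, \ob \smf \od \in T$ and $(\bfx, \oa \smf \oc) \sim^T_\alpha (\bfy, \ob \smf \od)$. As $T$ is countable, this exhibits $\sim^T_{\alpha,k}$ as a countable union, over such pairs $(\oc,\od)$, of clopen conditions on the distinguished tuples intersected with preimages of $\sim^T_\alpha$ under the concatenation maps $((\bfx,\oa),(\bfy,\ob)) \mapsto ((\bfx, \oa \smf \oc),(\bfy, \ob \smf \od))$. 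Taking a Polish topology $\sigma$ on $\R^T \times T$ that witnesses $\sim^T_\alpha$ potentially $\BPi^0_\lambda$ and refining it so that all (countably many) concatenation maps become continuous, each of these preimages is $\BPi^0_\lambda$, and hence $\sim^T_{\alpha,k}$ is $\BSigma^0_{\lambda+1}$ in this refined topology.

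It remains to collapse. Here $\sim^T_{\alpha,k}$ is, up to Borel bi-reducibility, an orbit equivalence relation of a non-Archimedean TSI Polish group: it is Borel reducible to a disjoint sum of the actions $E^m_{\alpha,k}$ by Lemma \ref{lm:sim_reduce} (and the groups $G^m_{\alpha,k}$ are non-Archimedean TSI, being built from group trees as in Section 5), while the converse reduction follows from the universality results established in Section 4; hence that orbit equivalence relation is also potentially $\BSigma^0_{\lambda+1}$. Since $\BSigma^0_{\lambda+1}$ is not attained and $\BPi^0_\lambda$ is the complexity class immediately preceding it in $(*)$, this orbit equivalence relation, and therefore $\sim^T_{\alpha,k}$, is potentially $\BPi^0_\lambda$. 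This closes the induction and proves the claim.

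The step I expect to require the most care is applying the hypothesis in the $k \ge 1$ case: one needs that $\sim^T_{\alpha,k}$ genuinely behaves like an orbit equivalence relation of a non-Archimedean TSI group for the purposes of potential complexity — which is what Lemma \ref{lm:sim_reduce}, the description of the $G^m_{\alpha,k}$, and Corollary \ref{cor:upper_bound} are for — and, more mundanely, that the refinement of topologies used to push $\sim^T_{\alpha,k}$ up to $\BSigma^0_{\lambda+1}$ can be carried out in a single Polish topology; both are routine given the machinery of Sections 2, 4, and 5, and the overall argument runs exactly parallel to the preceding claims in this proof.
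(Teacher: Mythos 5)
Your overall strategy coincides with the paper's: this claim is proved there by exactly the induction of Claim \ref{cl:D_not_attained}, namely the unrelativized relation $\sim^T_\alpha$ is a countable intersection of the relativized relations (handled by joining countably many witnessing topologies), the relativized relations $\sim^T_{\alpha,k}$ sit one level up at $\BSigma^0_{\lambda+1}$, and the non-attainment hypothesis collapses them back to $\BPi^0_\lambda$, which is the class immediately preceding $\BSigma^0_{\lambda+1}$ in $(*)$. Your $k=0$ and $k\ge 1$ computations are fine, and the base-case omission of $\sim^T_{0,m}$ for $m\ge 1$ is harmless (those relations are $\BSigma^0_2$, far below $\BPi^0_\lambda$).

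The one step that does not work as written is the justification of the collapse. Non-attainment speaks only about orbit equivalence relations induced by continuous actions of $G_T^\omega$, so the $\BSigma^0_{\lambda+1}$ bound must first be placed on such an orbit equivalence relation; you do this by asserting that $\sim^T_{\alpha,k}$ is Borel bireducible with $\bigsqcup_m E^m_{\alpha,k}$, with the backward reduction ``following from the universality results of Section 4.'' That backward reduction is not available: Proposition \ref{pr:fine_universal} only reduces potentially $\Gamma$ actions of $G_T^\omega$ to the $\sim^T$-relation matched with $\Gamma$, so invoking it for $\bigsqcup_m E^m_{\alpha,k}$ presupposes knowledge of the potential complexity of that orbit equivalence relation, which is precisely the unknown quantity (and no reduction $E^m_{\alpha,k}\le_B\;\sim^T_{\alpha,k}$ is established anywhere; Lemma \ref{lm:sim_reduce} is one-directional). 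The repair is the one you gesture at in your closing remarks, made precise by strengthening the induction hypothesis: carry the statement that $\sim^T_{\alpha,k}$ is Borel reducible to a potentially $\BPi^0_\lambda$ orbit equivalence relation of a non-Archimedean group, equivalently to $=^{+\lambda}$ by Proposition \ref{pr:hkl}(3). Then $\sim^T_{\alpha,k}$ reduces both to a $G_T^\omega$-orbit equivalence relation (Lemma \ref{lm:sim_reduce} and Proposition \ref{prop:tsi_actions}) and to an orbit equivalence relation of potential complexity $\BSigma^0_{\lambda+1}$ obtained by applying the countable shift-jump of Section 5 to $=^{+\lambda}$ (this is your union computation, carried out on the orbit-equivalence-relation side), so Corollary \ref{cor:upper_bound} (via Theorem \ref{th:borel_reduction_inverse}) produces a $G_T^\omega$-orbit equivalence relation of potential complexity $\BSigma^0_{\lambda+1}$ absorbing $\sim^T_{\alpha,k}$; non-attainment collapses it to $\BPi^0_\lambda$, and another application of Proposition \ref{pr:hkl}(3) regenerates the strengthened hypothesis. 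An abstract potential-complexity bound on $\sim^T_{\alpha,k}$ alone, which is not an orbit equivalence relation, cannot feed the non-attainment hypothesis directly.
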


\begin{proof}
This is essentially the same as Claim \ref{cl:D_not_attained}.
\end{proof}

\begin{claim}
Suppose $D(\BPi^0_{\lambda+n})$ is not attained for some limit $\lambda$ and finite $n \ge 2$. Then $\sim^T_\alpha$ is potentially $\BPi^0_{\lambda+n}$ for every $\alpha < \omega_1$.
\end{claim}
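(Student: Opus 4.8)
The plan is to run the same induction as in the proof of Claim~\ref{cl:D_not_attained}, with the indices shifted to the limit level $\lambda+n$. The quantity to track is the statement $(\star_\alpha)$: ``$\sim^T_\alpha$ and all of the relativizations $\sim^T_{\alpha,k}$ ($k\in\omega$) are potentially $\BPi^0_{\lambda+n}$'', and I would prove $(\star_\alpha)$ for all $\alpha<\omega_1$ by transfinite induction. For the low levels: if $\beta<\lambda$ then $\sim^T_{\beta,k}$ is potentially $\BSigma^0_{\beta+3}$ by Proposition~\ref{pr:fine_universal}(2), and since $\lambda$ is a limit and $n\ge 2$ we have $\BSigma^0_{\beta+3}\subseteq\BPi^0_{\beta+4}\subseteq\BPi^0_\lambda\subseteq\BPi^0_{\lambda+n}$, so these are fine with no hypothesis. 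The first case that uses the hypothesis is $\alpha=\lambda+n-1$: by Proposition~\ref{pr:fine_universal}(2)(b), together with Lemma~\ref{lm:sim_reduce}, Proposition~\ref{prop:tsi_actions} and Corollary~\ref{cor:upper_bound}, each $\sim^T_{\lambda+n-1,k}$ is potentially $\BSigma^0_{\lambda+n+1}$; the largest class of $(*)$ contained in $\BSigma^0_{\lambda+n+1}$ is $D(\BPi^0_{\lambda+n})$, so by Proposition~\ref{pr:hkl} (the HKL classification) each $\sim^T_{\lambda+n-1,k}$ is potentially $D(\BPi^0_{\lambda+n})$, and the assumption that $D(\BPi^0_{\lambda+n})$ is not attained forces it to be potentially $\BPi^0_{\lambda+n}$. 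The same computation shows $\sim^T_{\lambda+j,k}$ is potentially $D(\BPi^0_{\lambda+j+1})\subseteq\BPi^0_{\lambda+n}$ for $1\le j\le n-2$, and that $\sim^T_{\lambda,k}$ is potentially $\BSigma^0_{\lambda+2}$, hence (again by $(*)$) potentially $\BSigma^0_{\lambda+1}\subseteq\BPi^0_{\lambda+n}$. This establishes $(\star_\alpha)$ for all $\alpha\le\lambda+n$, and in particular $\sim^T_{\lambda+n}=\bigcap_{\beta<\lambda+n,\,m}\sim^T_{\beta,m}$ is potentially $\BPi^0_{\lambda+n}$.

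For the inductive step at $\alpha>\lambda+n$, I would assume $(\star_\beta)$ for all $\beta<\alpha$ and first refine the topology on $\R^T\times T$ once and for all (using Proposition~\ref{pr:hjorth_topology_change}) to a finer Polish $G_T$-topology in which every $\sim^T_{\beta,m}$ with $\beta<\alpha$ is genuinely $\BPi^0_{\lambda+n}$. Since $\sim^T_\alpha$ is the countable intersection $\bigcap_{\beta<\alpha,\,m}\sim^T_{\beta,m}$ and $\BPi^0_{\lambda+n}$ is closed under countable intersections, $\sim^T_\alpha$ is $\BPi^0_{\lambda+n}$ in that topology, hence potentially $\BPi^0_{\lambda+n}$. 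For the relativizations $\sim^T_{\alpha,k}$ with $k\ge 1$, the defining existential quantifier over $\oc\in T^k_\oa$, $\od\in T^k_\ob$ exhibits $\sim^T_{\alpha,k}$ as a countable union of $\BPi^0_{\lambda+n}$ relations in that topology, so a priori only potentially $\BSigma^0_{\lambda+n+1}$; but $\sim^T_{\alpha,k}$ is (by Lemma~\ref{lm:sim_reduce} and Proposition~\ref{prop:tsi_actions}) an orbit equivalence relation of a continuous action of $G_T^\omega$, so its potential complexity is realized by a class of $(*)$, and being contained in $\BSigma^0_{\lambda+n+1}$ it is contained in $D(\BPi^0_{\lambda+n})$; the standing hypothesis then pushes it down to $\BPi^0_{\lambda+n}$, closing $(\star_\alpha)$. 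The limit stages of the induction are immediate from the definition of $\sim^T_\alpha$.

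The one place that genuinely needs the hypothesis (and is the analogue of the subtle point in Claim~\ref{cl:D_not_attained}) is the ``bump up, then collapse back down'' move for the relativizations: the existential quantifier in the definition of $\sim^T_{\alpha,k}$ raises the complexity from $\BPi^0_{\lambda+n}$ to $\BSigma^0_{\lambda+n+1}$, and recovering $\BPi^0_{\lambda+n}$ requires invoking both that these relations are honest $G_T^\omega$-orbit equivalence relations — so that Proposition~\ref{pr:hkl} pins their potential complexity to a class of $(*)$ — and that $D(\BPi^0_{\lambda+n})$, which is exactly the unique class of $(*)$ strictly between $\BPi^0_{\lambda+n}$ and $\BPi^0_{\lambda+n+1}$, is not attained. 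Everything else is the same bookkeeping as in the earlier claims; indeed, modulo these index shifts the argument is essentially identical to Claim~\ref{cl:D_not_attained}.
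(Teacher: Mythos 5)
Your proposal is correct and takes essentially the paper's route: the paper proves this claim by declaring it ``essentially the same as Claim~\ref{cl:D_not_attained}'', which is precisely the index-shifted transfinite induction you run, and your write-up usefully makes explicit the step the paper leaves implicit, namely that at each level the relativized relations $\sim^T_{\alpha,k}$ are potentially $\BSigma^0_{\lambda+n+1}$, hence by the exhaustiveness of $(*)$ potentially $D(\BPi^0_{\lambda+n})$, hence by non-attainment potentially $\BPi^0_{\lambda+n}$. One cosmetic quibble: to make countably many potentially-$\BPi^0_{\lambda+n}$ relations simultaneously $\BPi^0_{\lambda+n}$ you should join the countably many witnessing Polish topologies (e.g.\ \cite[4.3.2]{Gao_2009}) rather than cite Proposition~\ref{pr:hjorth_topology_change}, which serves a different purpose; this does not affect the argument.
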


\begin{proof}
This is essentially the same as Claim \ref{cl:D_not_attained}.
\end{proof}

\begin{claim}
Suppose $\BPi^0_{\lambda+n}$ is not attained for some limit $\lambda$ and finite $n \ge 2$. Then for every $\alpha < \omega_1$, $\sim^T_\alpha$ is potentially $\BSigma^0_{\lambda+n-1}$, and in particular, for $n \ge 3$, $\sim^T_\alpha$ is potentially $D(\BPi^0_{\lambda+n})$.
\end{claim}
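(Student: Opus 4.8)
The plan is to follow the proof of Claim~\ref{cl:Pi_not_attained} (the finite analogue, for $\BPi^0_n$) essentially verbatim, with the finite index $n$ there replaced throughout by $\lambda+n$. The one systematic change is the ``one-step shift'' in how the relations $\sim^T_\alpha$ correspond to complexities across the limit $\lambda$: where in the finite regime $\sim^T_{n-2}$ had potential complexity $\BPi^0_n$, here it is $\sim^T_{\lambda+n-1}$ that has potential complexity $\BPi^0_{\lambda+n}$ (compare parts (3) and (4) of Proposition~\ref{pr:hkl} with part (2), via Proposition~\ref{pr:fine_universal}). So the skeleton is: locate the anchor relation at level $\BPi^0_{\lambda+n}$, use the hypothesis to push it --- and its relativised companions $\sim^T_{\lambda+n-1,k}$ --- one step down in $(*)$, and then propagate the collapse to every $\sim^T_\alpha$ by transfinite induction on the recursion defining the relations $\sim^T_{\alpha,k}$.

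Concretely: by Proposition~\ref{pr:fine_universal}, $\sim^T_{\lambda+n-1}$ is potentially $\BPi^0_{\lambda+n}$, and by Lemma~\ref{lm:sim_reduce} and Proposition~\ref{prop:tsi_actions} it --- and every $\sim^T_{\beta,k}$ --- is Borel reducible to an orbit equivalence relation of $G_T^\omega$. Hence the hypothesis that $\BPi^0_{\lambda+n}$ is not attained, together with the exhaustiveness theorem of~\cite{HKL_1998}, forces $\sim^T_{\lambda+n-1}$ to be potentially $\Gamma'$, where $\Gamma'$ is the $\subseteq$-largest class of $(*)$ lying strictly below $\BPi^0_{\lambda+n}$ --- namely $D(\BPi^0_{\lambda+n-1})$ when $n\ge 3$ and $\BSigma^0_{\lambda+1}$ when $n=2$. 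Unwinding the definition of $\sim^T_{\lambda+n-1,k}$, its matrix (before the existential quantifier over the countable set $T^k_{\oa}$) is essentially $\sim^T_{\lambda+n-1}$ evaluated at shifted points, hence potentially $\Gamma'$; since $\Gamma'$ is contained in $\BSigma^0_{\lambda+n}$ and the latter is closed under countable unions, $\sim^T_{\lambda+n-1,k}$ is potentially $\BSigma^0_{\lambda+n}$, and a second application of exhaustiveness and of the ``not attained'' hypothesis pushes $\sim^T_{\lambda+n-1,k}$ --- and, a fortiori, every $\sim^T_{\beta,k}$ with $\beta<\lambda+n-1$ --- down to $\Gamma'$. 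This is the base of the induction.

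For the inductive step, the hypothesis is that $\sim^T_{\beta,k}$ is potentially $\Gamma'$ for all $\beta<\alpha$ and $k\in\omega$. From the recursive definition $\sim^T_\alpha$ is then a countable conjunction of potentially-$\Gamma'$ relations, hence potentially $\BPi^0_{\lambda+n}$, hence by the hypothesis potentially $\Gamma'$; and the relativised $\sim^T_{\alpha,k}$ remain potentially $\Gamma'$ by the same ``matrix is $\Gamma'$, existential over a countable set keeps us inside $\BSigma^0_{\lambda+n}$, collapse again'' reasoning used at the base. To realise the countably many changes of topology needed along the way as a single Polish $G_T^\omega$-space I would appeal to Proposition~\ref{pr:hjorth_topology_change} and Lemma~\ref{lm:top_change}, exactly as in the proof of Theorem~\ref{th:top_change}; limit stages are immediate from the definition of $\sim^T_\lambda$ as the conjunction of the earlier $\sim^T_\beta$. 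Transfinite induction then gives, for every $\alpha<\omega_1$, that $\sim^T_\alpha$ is potentially $\Gamma'$; as $\Gamma'$ is a class of $(*)$ lying below $\BPi^0_{\lambda+n}$ this yields the stated $\BSigma^0$-bound, and for $n\ge 3$ the refinement to the $D(\BPi^0_{\lambda+n-1})$ class of $(*)$ is immediate.

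The step I expect to be the main obstacle --- as in the finite case, only slightly more delicate --- is the complexity bookkeeping for the relativised relations $\sim^T_{\alpha,k}$ at the inductive step: the extra existential quantifier over $T^k_{\oa}$ in the definition of $\sim^T_{\alpha,k}$ a priori raises the complexity by a level, and one must verify carefully that, after the collapse of the matrix to $\Gamma'$, it does not exceed $\BPi^0_{\lambda+n}$, so that ``$\BPi^0_{\lambda+n}$ not attained'' can be reapplied at every stage rather than used once. The extra care compared with Claim~\ref{cl:Pi_not_attained} comes from the fact that near a limit ordinal the intermediate classes $\BSigma^0_{\lambda+m}$ ($m\ge 2$) do not themselves occur in $(*)$, so one must track the $\BPi^0$, $\BSigma^0$ and $D(\BPi^0)$ levels of $(*)$ separately through the recursion, passing through the appropriate $D(\BPi^0_\cdot)$ classes to stay inside the list.
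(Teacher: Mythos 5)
Your proposal is correct and is exactly the paper's route: the paper proves this claim by repeating the argument of Claim~\ref{cl:Pi_not_attained} with the index shift you describe (anchor $\sim^T_{\lambda+n-1}$ potentially $\BPi^0_{\lambda+n}$, collapse via the HKL exhaustiveness of $(*)$ to $D(\BPi^0_{\lambda+n-1})$ for $n\ge 3$, resp.\ $\BSigma^0_{\lambda+1}$ for $n=2$, then transfinite induction through the relativised relations $\sim^T_{\beta,k}$). Your reading of the conclusion (collapse to $D(\BPi^0_{\lambda+n-1})$, the class of $(*)$ immediately below $\BPi^0_{\lambda+n}$) is the intended one, and the extra bookkeeping you flag is no worse than in the finite case, since the classes $\BSigma^0_m$, $m\ge 3$, are already absent from $(*)$ there.
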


\begin{proof}
This is essentially the same as Claim \ref{cl:Pi_not_attained}.
\end{proof}

The theorem follows from the above claims, using throughout the result from \cite{HKL_1998} that the list $(*)$ of potential complexity classes is exhaustive for Borel orbit equivalence relations induced by Borel actions of non-Archimedean Polish groups.
\end{proof}

We can now conclude Corollary \ref{cor:tsi_complexity_spectrum}. 

\tsicomplexityspectrumcor

\begin{proof}
By \cite{Solecki_1995}, the groups $\mathbb{Z}^\omega$ and $\mathbb{Z}_p^\omega$ are not tame for any prime $p$. By \cite[Theorem 5.11]{Hjorth_1998}, since these groups are abelian, they have Borel orbit equivalence relations of unbounded complexity in $(*)$. Since $(F_2)^\omega$ involves these groups, we know by \cite[Theorem 2.3.5]{Becker_Kechris_1996} that every orbit equivalence relation induced by a continuous action of either of these groups on a Polish space is Borel bi-reducible with such an action of $(F_2)^\omega$, and so $(F_2)^\omega$ has full Borel potential complexity spectrum as well. Then the previous theorem implies that these groups have full Borel complexity spectrum.
\end{proof}

\newpage{}

\bibliography{main}
\bibliographystyle{alpha}
\end{document}